\newtheorem{thm}{Theorem}[section]
\newtheorem*{thmNoCTHyp}{Theorem \ref{thm_noCT_hyp}}
\newtheorem*{thmNoCTIFP}{Theorem \ref{thm_noCT_IFP}}
\newtheorem*{thmNormalStructure}{Special Case of Theorem \ref{thm_structure_normal}}
\newtheorem*{thmPurelyAtoroidal}{Theorem \ref{thm:pur_ator}}
\newtheorem{prop}[thm]{Proposition}
\newtheorem{lemma}[thm]{Lemma}
\newtheorem{cor}[thm]{Corollary}
\theoremstyle{definition}
\newtheorem{defn}[thm]{Definition}
\newtheorem{remark}[thm]{Remark}
\newtheorem{example}[thm]{Example}
\newtheorem{notation}[thm]{Notation}
\newcommand{\field}[1]{\mathbb{#1}}
\newcommand{\Z}{\field{Z}}
\newcommand{\integers}{\field{Z}}
\newcommand{\N}{\field{N}}
\newcommand{\E}{\field{E}}
\newcommand{\Hy}{\field{H}}
\newcommand{\F}{\field{F}}
\newcommand{\cA}{\mathcal{A}}
\newcommand{\cB}{\mathcal{B}}
\newcommand{\cF}{\mathcal{F}}
\newcommand{\cH}{\mathcal{H}}
\newcommand{\cP}{\mathcal{P}}
\newcommand{\la}{\left\langle}
\newcommand{\ra}{\right\rangle}
\newcommand{\p}{\partial}
\newcommand{\G}{\Gamma}
\newcommand{\Lam}{\Lambda}
\newcommand{\hi}{\hat{\iota}}
\DeclareMathOperator{\Isom}{Isom}
\DeclareMathOperator{\CAT}{CAT}
\DeclareMathOperator{\Stab}{Stab}
\DeclareMathOperator{\diam}{diam}
\DeclareMathOperator{\Out}{Out}
\DeclareMathOperator{\Aut}{Aut}
\DeclareMathOperator{\Homeo}{Homeo}
\definecolor{amethyst}{rgb}{0.6, 0.4, 0.8}
\newcommand{\hide}[1]{}
\title{Cannon--Thurston maps for $\CAT(0)$ groups with isolated flats}
\author{Benjamin Beeker, Matthew Cordes, Giles Gardam, Radhika Gupta, and Emily Stark}
\keywords{Cannon--Thurston maps, CAT(0) groups, hyperbolic groups, boundaries}
\subjclass[2010]{20F65, (20F67, 20E07, 57M07, 57M27)}
\begin{document}

\begin{abstract}
    Mahan Mitra (Mj) proved Cannon--Thurston maps exist for normal hyperbolic subgroups of a hyperbolic group \cite{mitra}. We prove that Cannon--Thurston maps do not exist for infinite normal hyperbolic subgroups of non-hyperbolic $\CAT(0)$ groups with isolated flats with respect to the visual boundaries. We also show Cannon--Thurston maps do not exist for infinite infinite-index normal $\CAT(0)$ subgroups with isolated flats in non-hyperbolic $\CAT(0)$ groups with isolated flats. We obtain a structure theorem for the normal subgroups in these settings and show that outer automorphism groups of hyperbolic groups have no purely atoroidal $\Z^2$ subgroups.
\end{abstract}

\maketitle

\vskip.2in

%%%%%%%%%%%%%
%%%%%%%%%%%%%
  \section{Introduction}
%%%%%%%%%%%%%
%%%%%%%%%%%%%

  Cannon--Thurston maps, named after the seminal work of Cannon and Thurston in the setting of hyperbolic $3$-manifolds, allow one to combine two far-reaching approaches in geometric group theory: the study of the boundaries of a group and the study of the collection of subgroups of a group. Cannon and Thurston~\cite{cannonthurston} proved if $M$ is a closed fibered hyperbolic $3$-manifold with fiber a closed surface $\Sigma$, then there is a continuous surjective map from the visual boundary of the surface subgroup $\p\widetilde{\Sigma}\cong \p \Hy^2 \cong S^1$ to the visual boundary of the $3$-manifold group $\p \widetilde{M} \cong \p \Hy^3 \cong S^2$ that extends the inclusion $\widetilde{\Sigma} \hookrightarrow \widetilde{M}$ between the universal covers.  
  
  The study of group boundaries is especially fruitful in the presence of hyperbolic or non-positive curvature.
  Gromov~\cite{gromov} proved every hyperbolic group $G$ admits a visual boundary, which is unique up to $G$-equivariant homeomorphism and which yields a compactification of any Cayley graph for $G$ constructed with respect to a finite generating set. A similar theorem was obtained by Hruska--Kleiner~\cite{hruskakleiner} for $\CAT(0)$ groups with isolated flats. $\CAT(0)$ spaces with isolated flats, roughly speaking, are negatively curved away from a collection of isometrically embedded Euclidean flats and share common features with $\delta$-hyperbolic spaces~\cite{hruska,hruskakleiner,haulmark,hruskaruane}. In this paper, we establish a striking distinction between these classes of groups in terms of Cannon--Thurston maps.
  
  The Cannon--Thurston map for hyperbolic groups $H \leq G$ is the continuous map $\p H \rightarrow \p G$ that extends the inclusion from a Cayley graph for $H$ into a Cayley graph for $G$ given by choosing a finite generating set for $H$ and extending it to a finite generating set for $G$, when such a continuous extension exists. Generalizing to include groups acting on $\CAT(0)$ spaces, orbit maps between $\CAT(0)$ model spaces yield a natural notion of inclusion from a subgroup to a group. The Cannon--Thurston map is, again, defined as a continuous $H$-equivariant extension of this inclusion to the visual boundaries of the model spaces. See Section~\ref{sec:CT_map} for details. 
  
  Mitra (Mj) \cite{mitra}, generalizing work of Cannon--Thurston, proved Cannon--Thurston maps exist for normal hyperbolic subgroups of a hyperbolic group. We prove this result does not extend to the setting of $\CAT(0)$ groups with isolated flats, which in this paper have by definition a non-empty collection of flats and are thus non-hyperbolic. 
  
  \begin{thmNoCTHyp}
    Let $G$ be a $\CAT(0)$ group with isolated flats, and let $H \triangleleft G$ be an infinite hyperbolic normal subgroup.    Then the Cannon--Thurston map does not exist for $(H, G)$. 
  \end{thmNoCTHyp}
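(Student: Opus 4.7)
The plan is to argue by contradiction. Suppose that the Cannon--Thurston map $\hi \colon \partial H \to \partial X$ exists, where $X$ is a proper cocompact $\CAT(0)$ model space for $G$. First, $H$ has infinite index in $G$: otherwise $G$ would be virtually hyperbolic and therefore hyperbolic as a $\CAT(0)$ group, contradicting the non-empty flats hypothesis. Second, the map $\hi$ is automatically $G$-equivariant: since $H$ is hyperbolic and normal, conjugation by any $g \in G$ is a quasi-isometry of $H$ and extends to a homeomorphism $c_g^{*}$ of $\partial H$, and since changing the basepoint of the orbit map from $x_0$ to $g \cdot x_0$ is a bounded perturbation in $X$, uniqueness of the continuous boundary extension yields $\hi \circ c_g^{*} = g \cdot \hi$ for every $g \in G$.

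Next, the non-hyperbolicity of $G$ provides a flat $F \subset X$ of dimension at least $2$ and an infinite-order element $t \in P := \Stab_G(F)$ translating along $F$. In the isolated flats setting the fixed set of $t$ in $\partial X$ is exactly $\partial F$. Combined with $G$-equivariance, this gives the following rigidity: if $c_t^{*}(\eta) = \eta$ then $t \cdot \hi(\eta) = \hi(c_t^{*}(\eta)) = \hi(\eta)$, forcing $\hi(\eta) \in \partial F$; dually, any $\eta \in \partial H$ with $\hi(\eta) \notin \partial F$ is moved by $c_t^{*}$.

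The contradiction is then obtained by exhibiting an incompatibility between the conjugation dynamics of $P$ on $\partial H$ and its translation dynamics on $\partial F$. One natural avenue is to pick $t_1, t_2 \in P$ with distinct attracting endpoints $t_i^{+\infty} \in \partial F$: the attracting fixed points $\eta_i^{+}$ of $c_{t_i}^{*}$ on $\partial H$ must satisfy $\hi(\eta_i^{+}) = t_i^{+\infty}$, so a non-trivial $s = t_1 t_2^{-1} \in P$ inducing the identity on $\partial H$ would give $t_1^{+\infty} = t_2^{+\infty}$, a contradiction. Producing such $s$ exploits the rank-$2$ structure of $P$ together with the fact that the kernel of $\Aut(H) \to \Homeo(\partial H)$ is essentially trivial for hyperbolic $H$ with trivial center. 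The alternative route is to build two sequences $\{h_n\}, \{h_n'\} \subset H$ with a common limit in $\partial H$ but distinct limits in $\partial X$, directly violating continuity of $\hi$.

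The main obstacle is verifying this final incompatibility: the most delicate step is either (a) locating enough centralizing structure within $P$ to force distinct translation directions to collapse on $\partial H$, or (b) constructing the explicit pair of divergent sequences. Both rely on the combined use of Hruska--Kleiner's structural results on boundaries of $\CAT(0)$ spaces with isolated flats together with the boundary dynamics of hyperbolic group automorphisms. Without the isolated flats hypothesis, the identification of the fixed set of $t$ with the small sphere $\partial F$ breaks down, which is precisely why the analogous statement may fail in more general $\CAT(0)$ settings.
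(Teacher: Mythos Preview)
Your outline has a genuine gap at the dynamical core. You assert that the conjugation homeomorphisms $c_{t_i}^{*}$ on $\partial H$ possess ``attracting fixed points'' $\eta_i^{+}$ with $\hi(\eta_i^{+}) = t_i^{+\infty}$, but neither claim is justified. An automorphism of a hyperbolic group need not act on the boundary with any form of North--South dynamics; and even granting a fixed point $\eta$ of $c_{t_i}^{*}$, $G$-equivariance only yields $\hi(\eta) \in \partial F$, not the specific axis endpoint $t_i^{+\infty}$, since $t_i$ fixes all of the sphere $\partial F$ pointwise. Your proposed fallback---producing $s \in P$ with $c_s^{*} = \mathrm{id}$ on $\partial H$---actually runs against the fact you cite: the near-triviality of $\ker(\Aut(H) \to \Homeo(\partial H))$ would force such an $s$ to centralize $H$ (up to finite center), which you have not shown occurs, and from which you have not drawn a contradiction. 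The ``alternative route'' of building divergent sequences is only named, not executed.

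The paper's argument avoids the conjugation action on $\partial H$ entirely. The decisive step is Proposition~\ref{prop_elt_H_fstab}: there exists an infinite-order element $h \in H$ itself (not merely in $G$) lying in a $\Z^2$ subgroup of $G$. This is where the real work lies, and it is proved via Theorem~\ref{thm:pur_ator}, that $\Out(H)$ contains no purely atoroidal $\Z^2$ subgroup for $H$ hyperbolic---so the image of any $\Z^2 \leq G$ in $\Out(H)$ must fix a conjugacy class, producing the desired $h$. Once such $h$ is in hand, it acts with North--South dynamics on $\partial H$ (Theorem~\ref{thm_hyp_NS}) yet trivially on $\partial F \subset \partial X$ by the Flat Torus Theorem; since $\Lambda_x H = \partial X$ (Lemma~\ref{lemma_cat_ls_normal}), Lemma~\ref{lemma:crit_nonexist} gives an immediate contradiction. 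The key insight you are missing is to locate the dynamical obstruction \emph{inside} $H$, rather than in the $G$-action on $\partial H$.
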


  We prove an analogous theorem when the normal subgroup is also $\CAT(0)$ with isolated flats. 
  
  \begin{thmNoCTIFP}
   Let $H \triangleleft G$ be $\CAT(0)$ groups with isolated flats and suppose that $H$ is an infinite, infinite-index normal subgroup of $G$. Then the Cannon--Thurston map does not exist for $(H,G)$.
  \end{thmNoCTIFP}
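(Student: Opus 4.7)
My plan is to argue by contradiction: assume a Cannon--Thurston map $\hat\iota\colon \p X_H\to \p X_G$ exists, and derive an impossible constraint on $\hat\iota$ from the equivariance forced by normality together with the isolated-flats rigidity.

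For each $g\in G$, conjugation induces an automorphism $\phi_g\colon H\to H$, a quasi-isometry $\Phi_g$ of $X_H$ (by Milnor--Schwarz), and a boundary homeomorphism $\hat\Phi_g$ of $\p X_H$. A direct orbit-limit computation gives the equivariance
$$\hat\iota\circ \hat\Phi_g \;=\; g\cdot \hat\iota \qquad\text{on } \p X_H,$$
so $\hat\iota(\Fix(\hat\Phi_g))\subseteq \Fix_{\p X_G}(g)$ for every $g\in G$. The favourable case to pursue is $g\in C_G(H)\setminus H$ of infinite order: then $\phi_g=\mathrm{id}$, so $\hat\Phi_g$ is the identity on $\p X_H$, forcing $\hat\iota(\p X_H)\subseteq \Fix_{\p X_G}(g)$. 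Such a $g$ centralizes the non-virtually-cyclic group $H$, so it cannot be rank-one, and its axis therefore lies in a flat $F_g\subset X_G$; the isolated-flats hypothesis then gives $\Fix_{\p X_G}(g)=\p F_g$, a single boundary circle. On the other hand, $\hat\iota(\p X_H)$ equals the limit set of $H$ in $\p X_G$, which, for any peripheral $P\leq H$ stabilizing a flat $F_H\subset X_H$, contains the boundary circle of the $G$-flat supplied by the Flat Torus Theorem applied to $P\leq G$; across non-commensurable peripherals of $H$ these circles are pairwise disjoint, and together with boundary points of hyperbolic elements of $H$ whose axes in $X_G$ are not contained in any flat, they cannot all lie in the single circle $\p F_g$. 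This is the desired contradiction.

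The hard part will be producing an element $g\in C_G(H)\setminus H$ of infinite order from the hypotheses of the theorem. In favourable situations the element is immediately available (for instance if $Z(G)\not\subseteq H$ and contains an infinite-order element, or if some coset $gH\in G/H$ maps to the identity in $\mathrm{Out}(H)$). In general one must locate $g\in G\setminus H$ whose image in $\mathrm{Out}(H)$ has finite order, then pass to a power $g^k=hc$ with $h\in H$ and $c\in C_G(H)\setminus H$, reducing to the previous case. This reduction is where the infinite-index hypothesis and the paper's structure theorem for normal subgroups of CAT(0) groups with isolated flats (Theorem~\ref{thm_structure_normal}) are essential, supplying the information about the interaction between $G/H$ and $\mathrm{Out}(H)$ --- and about how peripheral subgroups of $H$ sit inside $G$ --- that the argument requires.
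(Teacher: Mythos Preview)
Your approach has a fundamental gap: the element you need does not exist. You want $g\in C_G(H)\setminus H$ of infinite order, or more generally $g\in G\setminus H$ of infinite order whose image in $\Out(H)$ has finite order. But Lemma~\ref{lemma:inf_ord_elt}(3) of the paper proves precisely that this is impossible. If $q\in G/H$ has infinite order and $\phi_q\in\Out(H)$ has finite order, then $G$ contains a subgroup that is virtually $H\times\Z$; by Lemma~\ref{lemma:prod_cat0} this forces $H$ to be virtually abelian, contradicting the hypothesis that $H$ is $\CAT(0)$ with isolated flats. The same obstruction kills your reduction step: writing $g^k=hc$ with $c\in C_G(H)$ only helps if $c\notin H$, which requires $g^k\notin H$, i.e.\ $g$ has infinite order in $G/H$---and then $\phi_{gH}$ must have infinite order in $\Out(H)$, so no power of $g$ acts by an inner automorphism. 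Theorem~\ref{thm_structure_normal} does not rescue this; it describes the internal structure of $H$, not the map $G/H\to\Out(H)$, and in any case its proof \emph{uses} Lemma~\ref{lemma:inf_ord_elt}(3).

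The paper's actual argument goes in the opposite direction: rather than seeking an element of $G$ acting trivially on $H$, it exploits that conjugation by a suitable $t\in G$ acts \emph{nontrivially} on $H$ while fixing a peripheral abelian subgroup $\cA\leq H$. This produces sequences $\{h_n\}$ and $\{h_n t^n h_0 t^{-n}\}$ in $H$ that converge to the same point in $\p Y$ (because $t^n h_0 t^{-n}$ is controlled relative to $\cA$ in $Y$) but to different points in $\p X$ (because in $X$ the geodesic to $h_n t^n h_0 t^{-n}\cdot x$ is forced, via closest-point projections to flats, to pass near $h_n t^n\cdot x$, which escapes along a different direction in the flat for $\langle\cA,t\rangle$). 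When such sequences are unavailable, the paper falls back on the dynamical criterion of Lemma~\ref{lemma:crit_nonexist}.
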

  
  We obtain a structure theorem for the normal subgroups that arise in the theorems above. If $H$ is a torsion-free infinite-index hyperbolic normal subgroup of a hyperbolic group~$G$, then combined work of Mosher~\cite{mosher}, Paulin~\cite{paulin}, Rips--Sela~\cite{ripssela}, and Bestvina--Feighn~\cite{bestvinafeighn95} proves that $H$ is a free product of finitely many finitely generated free groups and surface groups. See \cite{mitralaminations} for a brief explanation. We prove a similar, but necessarily weaker (see Example~\ref{example:normal_sub}), decomposition theorem in the setting of $\CAT(0)$ groups with isolated flats. We state the result below in the simpler setting that the normal subgroup $H$ is torsion-free; for the full statement, see Section~\ref{sec:subgroup_structure}. 
  
  \begin{thmNormalStructure}
   Let $G$ be a $\CAT(0)$ group with isolated flats, and let $H \triangleleft G$ be an infinite-index torsion-free normal subgroup that is either hyperbolic or $\CAT(0)$ with isolated flats. Then $H \cong F_r * H_1 \ldots * H_n$, where $r,n \geq 0$, $F_r$ denotes the free group of rank~$r$, and $H_i$ is either a closed surface group, a free abelian group, or a $1$-ended group that is either hyperbolic or $\CAT(0)$ with isolated flats and whose JSJ decomposition over elementary subgroups does not contain any rigid vertex groups. 
  \end{thmNormalStructure}
  
   Cannon--Thurston maps are known to exist for certain group pairs in the theorems above with respect to another well-studied boundary. Hruska--Kleiner~\cite{hruskakleiner} proved if $G$ is a $\CAT(0)$ group with isolated flats and $\cP$ is the collection of maximal virtually abelian subgroups of $G$, then $(G, \cP)$ forms a relatively hyperbolic group pair. Bowditch~\cite{bowditch12} showed the pair $(G, \cP)$ has a well-defined boundary, now called the Bowditch boundary  and denoted $\p(G, \cP)$. The $\CAT(0)$ visual boundary contains spheres which arise as the visual boundary of flats, while these spheres are collapsed to points in the Bowditch boundary; see work of Tran~\cite{tran}.  Pal~\cite[Theorem 3.11]{pal} proved the existence of the Cannon--Thurston map for $H \triangleleft G$ with respect to the Bowditch boundaries in the case that the group $H$ is hyperbolic relative to a nontrivial subgroup $H_1$, the group $G$ is hyperbolic relative to $N_G(H_1)$ and weakly hyperbolic relative to $H_1$, and $G$ preserves cusps; see also \cite{bowditch07,mj14}. Under these assumptions, the quotient group is also known to be hyperbolic by work of Mj--Sardar~\cite{mjsardar}. We remark that the natural peripheral structure on a $\CAT(0)$ group with isolated flats described above may contain more than one subgroup.  
   
   \vskip.05in
  
  Two motivating questions regarding Cannon--Thurston maps are 
  \begin{enumerate}
   \item For which groups $H \leq G$ and for which boundaries does the Cannon--Thurston map exist? 
   \item If the Cannon--Thurston map exists, what is its structure? Can this structure be applied to better understand groups and their boundaries?
  \end{enumerate}

  The focus of this paper is the first question above, though we were inspired by results regarding the second. Namely, a prototypical example of a $\CAT(0)$ group with isolated flats is the fundamental group of a mapping torus of a pseudo-Anosov homeomorphism of a surface with negative Euler characteristic and one boundary component. These groups are examples of free-by-cyclic groups $F_n \rtimes \Z$ and act freely and cocompactly on truncated hyperbolic $3$-space~\cite{thurston,otal}. For hyperbolic extensions of free groups, there is a powerful theory of Cannon--Thurston maps that continues to be developed and applied~\cite{mitra,mitralaminations,kapovichlustig,dowdallkapovichtaylor,algomkfirhilionstark}. Thus, we aimed to understand whether these results generalize to the $\CAT(0)$ setting. Our main theorems suggest that generalizations must be studied with respect to the Bowditch boundary.

  Background on Cannon--Thurston maps is given in the survey of Mj~\cite{mj-survey}. The first example of a hyperbolic group pair for which the Cannon--Thurston map does not exist was given by Baker--Riley~\cite{bakerriley}; see also \cite{matsudashinichi,mousley}. 
  
      \begin{remark}
     Every finitely presented group has a {\it Morse boundary} that is well-defined up to homeomorphism, as shown by Cordes~\cite{cordes}, who generalized the {\it contracting boundary} construction of Charney--Sultan~\cite{charneysultan}. The Morse boundary captures hyperbolic-like behavior of a group and coincides with the visual boundary if the group is hyperbolic and the contracting boundary if the group is $\CAT(0)$. For background, see \cite{cordes_survey}. Suppose $H \triangleleft G$, where $H$ is an infinite hyperbolic group and $G$ is a $\CAT(0)$ group with isolated flats. We prove that the Cannon--Thurston map does not exist for the pair $(H,G)$ with respect to their Morse boundaries; see Proposition~\ref{prop:Morse}.  
    \end{remark}
    
  \subsection*{Method of proof}
  
    If $H \triangleleft G$ is an infinite hyperbolic normal subgroup of a $\CAT(0)$ group with isolated flats, we find a dynamical obstruction to the existence of the Cannon--Thurston map. To illustrate our methods, we begin by presenting a special case. Let $G = \la a,b \ra \rtimes_{\phi} \la t \ra$, where $\phi$ is an automorphism of the rank-two free group $\la a,b \ra$ given by $\phi(a) = aba$ and $\phi(b) = ba$. Then $G = \la a,b,t \, | \, t^{-1}at = aba, t^{-1}bt = ba \ra$ is the fundamental group of a mapping torus of a pseudo-Anosov homeomorphism of a surface of genus one with one boundary component. The group $G$ is $\CAT(0)$ with isolated flats and $H = \la a,b \ra$ is a hyperbolic normal subgroup of $G$. The commutator element $[a,b] \leq H$ commutes with the element $t \in G$. By the Flat Torus Theorem~\cite[Theorem II.7.1]{bridsonhaefliger}, if $G$ acts properly and cocompactly by isometries (i.e., {\it geometrically}) on a $\CAT(0)$ space $X$, then the element $[a,b]$ stabilizes a flat $F \subset X$ on which it acts by translation. Hence, the element $[a,b]$ acts trivially on its boundary $\p F \subset \p X$. Since $[a,b] \in H$ has infinite order, the element acts by North-South dynamics on $\p H$ (see Definition~\ref{def:NS}). This difference in dynamics proves a (continuous $H$-equivariant) Cannon--Thurston map cannot exist; see Lemma~\ref{lemma:fp_to_fp}. 
    
    More generally, we prove in Proposition~\ref{prop_elt_H_fstab} that if $H \triangleleft G$ is an infinite hyperbolic normal subgroup of a $\CAT(0)$ group with isolated flats, then there exists an infinite-order element of $H$ which acts by translation on a flat in any $\CAT(0)$ space on which the group $G$ acts geometrically. To prove Proposition~\ref{prop_elt_H_fstab}, we analyze the outer automorphism group of the normal hyperbolic subgroup and apply the following theorem.
    
    \begin{thmPurelyAtoroidal}
    Let $H$ be a hyperbolic group. Then there does not exist a purely atoroidal subgroup of $\Out(H)$ isomorphic to $\Z^2$. 
    \end{thmPurelyAtoroidal}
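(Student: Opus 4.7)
The plan is to suppose for contradiction that $\la \alpha, \beta \ra \cong \Z^2 \leq \Out(H)$ is purely atoroidal, and to proceed by case analysis on the end-theoretic structure of $H$, in each case using atoroidality of every non-trivial element of $\Z^2$ to force a contradiction.

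For $H$ one-ended hyperbolic, I would exploit the canonical Bowditch JSJ decomposition of $H$ over virtually cyclic subgroups. If $H$ admits no such splitting (the rigid case), then $\Out(H)$ is finite by the work of Paulin and Rips--Sela, leaving no room for a $\Z^2$. Otherwise $\Out(H)$ acts on the finite Bowditch graph, so a finite-index subgroup $\Out^0(H)$ preserves each edge group up to conjugacy in $H$. Since $\Out$ of any virtually cyclic group is finite, every element of $\Out^0(H)$ has a power that fixes the conjugacy class of an infinite-order edge-group element, so no element of $\Out(H)$ can be atoroidal. When $H$ has multiple ends but is not virtually free, Grushko decomposition (or Dunwoody accessibility in the torsion case) provides a non-trivial one-ended freely indecomposable factor $A$, which is malnormal in $H$; applying the one-ended case to $A$, each element of $\Z^2$ (after passing to a finite-index subgroup) has a power fixing an infinite-order conjugacy class in $A$, which by malnormality is also fixed in $H$, contradicting atoroidality on $H$.

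The remaining virtually free case reduces by finite-index considerations to $H = F_n$, which I would address by induction on $n$. By the Handel--Mosher subgroup classification in $\Out(F_n)$, after passing to a finite-index subgroup of $\Z^2$, either $\Z^2$ virtually preserves a proper free factor system $\{[F_1], \dots, [F_k]\}$, or $\Z^2$ contains a fully irreducible element. In the latter case, the Bestvina--Feighn--Handel theorem on centralizers of atoroidal fully irreducibles forces $\Z^2$ into a virtually cyclic subgroup of $\Out(F_n)$, contradicting $\Z^2 \not\cong \Z$. In the former case, consider the projections $\pi_i : \Z^2 \to \Out(F_i)$: if some $\pi_i$ has rank $2$, its image is purely atoroidal in $\Out(F_i)$ by malnormality of free factors, contradicting the inductive hypothesis; if every $\pi_i$ has rank at most $1$, some non-trivial kernel element acts on $F_i$ by an inner automorphism and therefore fixes conjugacy classes of infinite-order elements of $F_i$, which by malnormality are also fixed in $F_n$, violating atoroidality. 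The main obstacle is this free group case, which relies on substantial $\Out(F_n)$-theory (the Handel--Mosher subgroup classification and the Bestvina--Feighn--Handel centralizer theorem); the one-ended and mixed cases are comparatively clean applications of the canonical JSJ structure and malnormality of free factors.
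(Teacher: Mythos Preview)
Your overall architecture mirrors the paper's, and most of it is right, but there is a genuine gap in the one-ended case. Your dichotomy reads: either $H$ admits no splitting over two-ended subgroups (hence $\Out(H)$ is finite by Paulin and Rips--Sela), or the Bowditch JSJ graph has edges, whose virtually cyclic edge groups supply periodic conjugacy classes. This misses the case where $H$ is cocompact Fuchsian. A closed hyperbolic surface group splits over $\Z$ in many ways, so it is not rigid in your sense; yet Bowditch's theorem is stated only for non-Fuchsian one-ended hyperbolic groups, and in any JSJ framework such a group is a single flexible vertex with no edges, so your edge-group argument simply has nothing to grab. Moreover $\Out(\pi_1(\Sigma_g))$ is infinite. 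The paper handles this case separately by invoking McCarthy's theorem that centralizers of pseudo-Anosov mapping classes are virtually cyclic, whence a purely atoroidal $\Z^2$ is impossible there. The same gap propagates into your infinite-ended reduction, since a one-ended Stallings--Dunwoody factor $A$ may itself be virtually a closed surface group.

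For $\Out(F_n)$ your route genuinely diverges from the paper's. The paper uses Feighn--Handel disintegration: for any abelian $A\leq\Out(F_r)$ there exists $\phi\in A$ with $D(\phi)\cap A$ of finite index, and a direct analysis of the $B$-graph and strata of a completely split train track representative of $\phi$ exhibits a non-atoroidal element of $D(\phi)\cap A$. You instead combine the Handel--Mosher subgroup dichotomy (a fully irreducible element versus a virtually invariant proper free factor) with the Bestvina--Feighn--Handel virtually-cyclic-centralizer theorem for atoroidal fully irreducibles, and descend by induction on rank through the projection to $\Out$ of an invariant factor. Both arguments are valid. Yours is structurally cleaner and more conceptual, at the cost of invoking two separate substantial theorems; the paper's disintegration argument stays within a single source and is more hands-on with the train-track combinatorics.
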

    
    The proof of Theorem~\ref{thm:pur_ator} relies on two graph of groups decompositions associated to a hyperbolic group $G$, a {\it Stallings--Dunwoody decomposition} of $G$ as a finite graph of groups with finite edge groups and vertex groups that have at most one end and the {\it JSJ decomposition} of the non-Fuchsian one-ended factors in the Stallings--Dunwoody decomposition. As these graph of groups decompositions have aspects invariant under outer automorphisms, the proof reduces to the case that the hyperbolic group is virtually a closed surface group or virtually a free group. The surface group case follows from work of McCarthy~\cite{mccarthy}. The main tool we use in the free group case is the work of Feighn--Handel~\cite{fh09}.
        
   If $H \triangleleft G$ are both $\CAT(0)$ groups with isolated flats, and $H$ is an infinite-index subgroup, then it is possible that an element of $H$ acts by North-South dynamics on $\p H$, and acts by translation on a flat in a $\CAT(0)$ model space for $G$. In this case, our previous dynamical arguments (see Lemma~\ref{lemma:crit_nonexist}) prove the Cannon--Thurston map does not exist. However, there are examples in which this is not the case; see Example~\ref{example:normal_sub} and Figure~\ref{figure:noCT}. In these situations, we employ a different strategy of proof: we show there are sequences of elements in the subgroup $H$ so that the corresponding orbit points converge to the same point in $\p H$, but converge to different points in the boundary of a flat in $\p G$. For example, with $H \triangleleft G$ as in the beginning of this subsection, the sequences $[a,b]^n$ and $[a,b]^n\phi^n(a) = [a,b]^nt^nat^{-n}$ converge to the same point in $\p H$. However, these sequences converge to different points in $\p G = \p X$ as the sequence $[a,b]^nt^nat^{-n}$ approximates the diagonal in the flat in $X$ stabilized by $[a,b]$ and $t$. The theory of closest-point projections for relatively hyperbolic group pairs of Dru\c{t}u--Sapir~\cite{drutusapir} and Sisto~\cite{sisto} constitutes an important tool in our analysis.  

   \vskip.1in
   
 \subsection*{Acknowledgements} The authors are thankful for helpful discussions with Fran\c{c}ois Dahmani, Chris Hruska, Mahan Mj, Kim Ruane, Michah Sageev, and Daniel Woodhouse. The second and fourth authors were supported by the Israel Science Foundation (Grant 1026/15). The second and fifth authors were partially supported at the Technion by a Zuckerman STEM Leadership Fellowship.  The second author is also partially supported by an ETHZ Postdoctoral Fellowship. The third author was partially supported by the Israel Science Foundation (Grant 662/15) and by the Deutsche Forschungsgemeinschaft (DFG, German Research Foundation) under Germany's Excellence Strategy -- EXC 2044 -- 390685587, Mathematics Münster: Dynamics -- Geometry -- Structure, and acknowledges the hospitality of the Hausdorff Research Institute for Mathematics. The fifth author was partially supported by the Azrieli Foundation.

   \vskip.2in
 
%%%%%%%%%%%%%
%%%%%%%%%%%%%
  \section{Preliminaries}
%%%%%%%%%%%%%
%%%%%%%%%%%%%
 
  \subsection{Visual boundary}
  
  \begin{defn}
   Let $X$ be a proper geodesic metric space which is hyperbolic or $\CAT(0)$. The {\it visual boundary of $X$}, denoted $\p X$, is the set of equivalence classes of geodesic rays in $X$, where two rays $c_1,c_2:[0,\infty) \rightarrow X$ are {\it equivalent} if there exists a constant $D$ so that $d(c_1(t),c_2(t)) \leq D$ for all $t \in [0,\infty)$. 
  \end{defn}

  There is a natural topology on $X \cup \p X$ called the {\it cone topology}. With respect to this topology, both $\p X$ and $X \cup \p X$ are compact Hausdorff spaces whenever $X$ is proper. An isometry of $X$ induces a homeomorphism of $\p X$. See \cite{bridsonhaefliger,kapovichbenakli} for details. For the remainder of the paper, we assume that the hyperbolic and $\CAT(0)$ spaces we consider are proper geodesic metric spaces. 
      
  \begin{defn}[North-South dynamics] \label{def:NS}
      If $Z$ is a topological space, then $g \in \Homeo(Z)$ acts with {\it North-South dynamics} if $g$ fixes two distinct points $g^{+ \infty},g^{- \infty} \in Z$ and for any open sets $U,V \subset Z$ with $g^{+ \infty} \in U$ and $g^{- \infty} \in V$ there exists $n \in \N$ so that $g^n(Z - V) \subset U$ and $g^{-n}(Z-U) \subset V$. 
  \end{defn}
  
  \begin{thm}  \cite{coorneartdelzantpapadopoulos,alonsoetal,ghysdelaharpe,gromov} \label{thm_hyp_NS}
   If $H$ is a hyperbolic group and $a \in H$ is an element of infinite order, then $a$ acts by North-South dynamics on $\p H$. 
  \end{thm}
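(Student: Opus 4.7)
The plan is to prove this via the classical quasi-axis construction for infinite-order elements of hyperbolic groups, proceeding in three steps.

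First, I would establish that the cyclic subgroup $\la a \ra$ is quasi-isometrically embedded in $H$. This follows from the standard fact (due to Gromov) that every infinite-order element of a hyperbolic group has positive stable translation length $\tau(a) = \lim_{n \to \infty} |a^n|/n > 0$. Consequently the orbit map $n \mapsto a^n \cdot o$, for a basepoint $o$ in the Cayley graph, is a $(\lambda,c)$-quasi-geodesic. By the Morse stability lemma for quasi-geodesics in $\delta$-hyperbolic spaces, this quasi-axis lies within bounded Hausdorff distance of a bi-infinite geodesic $\gamma$ and therefore has two distinct endpoints $a^{+\infty}, a^{-\infty} \in \p H$. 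Since left multiplication by $a$ shifts the orbit by one step along the quasi-axis, it fixes each endpoint.

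Second, I would show the required attraction/repulsion behavior. Fix $o \in H$ and recall that $\xi_n \to \xi_\infty$ in $\p H$ iff the Gromov product $(\xi_n \mid \xi_\infty)_o \to \infty$. For any $\xi \in \p H \setminus \{a^{-\infty}\}$, a nearest-point projection onto $\gamma$ shows that $(\xi \mid a^{+\infty})_o$ is coarsely equal to the signed distance along $\gamma$ from $o$ to the projection of $\xi$. Translating by $a^n$ shifts this projection along $\gamma$ by approximately $n\cdot \tau(a)$, so $(a^n \cdot \xi \mid a^{+\infty})_o$ grows linearly in $n$ and $a^n \cdot \xi \to a^{+\infty}$ as $n \to \infty$; the symmetric argument gives $a^{-n}\cdot \xi \to a^{-\infty}$ for $\xi \neq a^{+\infty}$.

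Third, to obtain the uniform convergence required by Definition~\ref{def:NS}, let $V$ be an open neighborhood of $a^{-\infty}$ and set $K = \p H - V$, a compact set disjoint from $a^{-\infty}$. On $K$, the Gromov products $(\xi \mid a^{-\infty})_o$ are uniformly bounded above (otherwise a subsequence would converge to $a^{-\infty}$, contradicting compactness of $K$ in $\p H - \{a^{-\infty}\}$). This uniform bound makes the projection estimates of Step~2 uniform in $\xi \in K$, so for any open $U \ni a^{+\infty}$ there exists $n$ with $a^n(K) \subset U$. The main obstacle is the careful execution of the nearest-point projection and Gromov product estimates in Step~2 and their uniformity in Step~3, both of which rest on standard thin-triangle calculations in $\delta$-hyperbolic spaces; these can be extracted from the cited texts.
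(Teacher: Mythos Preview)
The paper does not supply its own proof of this theorem; it is stated with citations to \cite{coorneartdelzantpapadopoulos,alonsoetal,ghysdelaharpe,gromov} and used as a black box. Your sketch is correct and follows exactly the classical argument found in those references: positive stable translation length gives a quasi-axis, Morse stability yields two distinct fixed endpoints $a^{\pm\infty}$, and nearest-point projection to the axis combined with Gromov-product estimates produces the uniform attraction/repulsion required by Definition~\ref{def:NS}. There is nothing in the paper to compare against, and your outline has no substantive gaps beyond the routine thin-triangle bookkeeping you already flagged.
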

  
  \begin{defn}
   Let $X$ be a $\CAT(0)$ space, and let $h \in \Isom(X)$ be a loxodromic element. The element $h$ is called a {\it rank-one isometry of $X$} if there exists an axis for $h$ which does not bound an isometrically embedded copy of a Euclidean half-plane.  
  \end{defn}
  
   \begin{thm}  \label{lemma_rk1_NS} \cite[Lemma 4.4]{hamenstadt}
    Let $X$ be a proper $\CAT(0)$ space, and let $G \leq \Isom(X)$ be a subgroup whose limit set consists of at least two points and which does not fix a point in $\p X$. If $g \in G$ has rank one, then $g$ acts on $\p X$ with North-South dynamics.      
  \end{thm}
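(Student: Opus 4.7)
The plan is to combine Ballmann's contracting (or ``Morse'') property for rank-one axes with a standard cone-topology argument. Let $\gamma \subset X$ be an axis for $g$ with endpoints $\xi_\pm := g^{\pm\infty} \in \p X$ and translation length $\tau > 0$; note that $g \cdot \xi_\pm = \xi_\pm$ by construction. Since $\gamma$ is rank one, a theorem of Ballmann furnishes a constant $D > 0$ such that the closest-point projection $\pi_\gamma \colon X \to \gamma$ is coarsely well defined and strongly contracting in the following sense: any geodesic segment whose image avoids the $D$-neighborhood of $\gamma$ projects to a subset of $\gamma$ of diameter at most $D$. This upgrades to a boundary statement: for any $\eta \in \p X \setminus \{\xi_-\}$, the projection of the geodesic ray $[p,\eta)$ from a basepoint $p \in \gamma$ to $\gamma$ is bounded in the $\xi_-$ direction, since otherwise that ray would be forced to track $\gamma$ all the way to $\xi_-$, contradicting $\eta \neq \xi_-$.

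Recall that the cone topology at $p$ declares $\eta_k \to \eta$ iff the rays $[p, \eta_k)$ converge to $[p, \eta)$ uniformly on compact sets. It therefore suffices to show that for every compact $K \subset \p X \setminus \{\xi_-\}$ and every open neighborhood $U$ of $\xi_+$, one has $g^n K \subset U$ for all sufficiently large $n$; the dual conclusion for $g^{-n}$ follows by the symmetric argument applied to $g^{-1}$. By compactness of $K$ combined with the contracting property, there exists a uniform constant $M > 0$ such that, parameterizing $\gamma$ so that $\xi_+$ corresponds to $+\infty$, the projection $\pi_\gamma([p, \eta))$ has $\gamma$-coordinate bounded below by $-M$ for every $\eta \in K$. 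Since $g$ acts on $\gamma$ by translation by $\tau$ in the $\xi_+$ direction and conjugates $\pi_\gamma$ equivariantly, the projection of $[p, g^n \eta)$ has $\gamma$-coordinate at least $n\tau - M$. Feeding this back into strong contraction shows that $[p, g^n \eta)$ fellow-travels the positive ray $\gamma|_{[0,\infty)}$ for arbitrarily long initial segments as $n \to \infty$, uniformly in $\eta \in K$, which is precisely convergence of $g^n \eta$ to $\xi_+$ in the cone topology.

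The main obstacle is the uniformity step: transferring the pointwise contracting estimate into a genuinely uniform bound across the compact set $K$. This is where compactness of $K$ together with continuity of the extension of $\pi_\gamma$ to $\p X \setminus \{\xi_-\}$ are used in an essential way, and it is precisely to guarantee such a contracting projection extends to the boundary that one invokes Ballmann's analysis of rank-one axes in proper $\CAT(0)$ spaces. The hypotheses that the limit set of $G$ has at least two points and that $G$ does not globally fix a point of $\p X$ place us in the non-elementary regime in which $\xi_\pm$ are genuinely distinct attracting/repelling dynamical fixed points for $g$, so that the conclusion of North--South dynamics is substantive rather than vacuous.
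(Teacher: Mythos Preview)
The paper does not prove this statement; it is quoted verbatim as a known result with a citation to Hamenst\"adt, so there is no in-paper proof to compare your proposal against.

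Your sketch follows the standard strongly-contracting-projection approach (going back to Ballmann) and is correct in outline. Two remarks are in order. First, the uniformity step deserves more care: you appeal to ``continuity of the extension of $\pi_\gamma$ to $\p X \setminus \{\xi_-\}$,'' but this continuity is itself a consequence of the contracting property and is not automatic from the $\CAT(0)$ geometry alone; it should be derived rather than asserted. Second, your final paragraph misidentifies the role of the hypotheses on $G$. The endpoints $\xi_\pm$ of an axis are automatically distinct because the axis is a bi-infinite geodesic, so no hypothesis on $G$ is needed for that; indeed, North--South dynamics for a single rank-one isometry on $\p X$ holds without any assumption on the ambient group. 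Hamenst\"adt's hypotheses on $G$ (limit set with at least two points, no global fixed point on $\p X$) belong to the broader non-elementary setting of her paper, where they are used to deduce further consequences such as minimality of the action on the limit set, not to make the North--South conclusion itself go through. Your argument correctly never invokes those hypotheses, but your gloss on their purpose is inaccurate.
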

  
  A lemma similar to the following can be found in \cite[Lemma 2.10]{jeonkapovichleiningerohshika}. 
  
  \begin{lemma} \label{lemma:fp_to_fp}
    Let $H \leq G$. Suppose $H$ acts geometrically on $Y$ and $G$ acts geometrically on $X$, where $X$ and $Y$ are hyperbolic or $\CAT(0)$ spaces. If $a \in H$ acts by North-South dynamics on $\p Y$ fixing the points $a^{\pm \infty} \in \p Y$ and $\hi:\p Y \rightarrow \p X$ is continuous and $H$-equivariant, then $a$ acts by North-South dynamics on $\hi(\p Y)$ fixing the points $\hi(a^{\pm \infty}) \in \p X$.
  \end{lemma}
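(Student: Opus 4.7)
The plan is to directly transfer North--South dynamics from $\p Y$ to the subspace $\hi(\p Y) \subset \p X$ using only continuity of $\hi$ together with $H$-equivariance; the geometric hypotheses on $X$ and $Y$ are not actually needed for the argument itself. As a preliminary step, I would verify that $\hi(a^{+\infty})$ and $\hi(a^{-\infty})$ are fixed by $a$, which is immediate from equivariance: $a \cdot \hi(a^{\pm\infty}) = \hi(a \cdot a^{\pm\infty}) = \hi(a^{\pm\infty})$.

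Next, to verify the open-neighborhood condition for North--South dynamics on $\hi(\p Y)$, I would start with arbitrary open sets $\widetilde U, \widetilde V \subset \hi(\p Y)$ (in the subspace topology inherited from $\p X$) containing $\hi(a^{+\infty})$ and $\hi(a^{-\infty})$ respectively, and pull them back to open sets $U = \hi^{-1}(\widetilde U)$ and $V = \hi^{-1}(\widetilde V)$ in $\p Y$ using continuity of $\hi$. Since $a^{+\infty} \in U$ and $a^{-\infty} \in V$, the North--South dynamics of $a$ on $\p Y$ supplies an integer $n \in \N$ with $a^n(\p Y \setminus V) \subset U$ and $a^{-n}(\p Y \setminus U) \subset V$.

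Finally, I would transport these inclusions back to $\hi(\p Y)$. Given $y \in \hi(\p Y) \setminus \widetilde V$, any preimage $x \in \p Y$ with $\hi(x) = y$ satisfies $x \notin V$ (else $\hi(x) \in \widetilde V$), so $a^n x \in U$, and then $H$-equivariance gives $a^n \cdot y = \hi(a^n x) \in \widetilde U$. This yields $a^n(\hi(\p Y) \setminus \widetilde V) \subset \widetilde U$, and the symmetric argument gives $a^{-n}(\hi(\p Y) \setminus \widetilde U) \subset \widetilde V$.

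The argument is a routine topological transfer and I do not anticipate a serious obstacle. The one point worth flagging is that the conclusion as literally stated presupposes $\hi(a^{+\infty}) \neq \hi(a^{-\infty})$ in order for the conclusion to be North--South dynamics in the sense of Definition \ref{def:NS}; this distinctness is not forced by the hypotheses of the lemma in isolation, but in the intended applications (for example, when $a$ simultaneously acts with rank-one North--South dynamics on $\p X$) it can be read off from the surrounding context, or is precisely what is contradicted when one uses the lemma to rule out a Cannon--Thurston map. In any case, the computation above delivers the dynamical transfer whenever the two image points remain distinct.
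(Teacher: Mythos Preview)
Your argument is correct and follows essentially the same route as the paper's proof: pull back open neighborhoods via continuity, apply North--South dynamics on $\p Y$, and push forward via $H$-equivariance. The paper additionally asserts $\hi(a^{+\infty}) \neq \hi(a^{-\infty})$ with the justification ``since $a$ has infinite order'' (implicitly invoking the geometric action on $X$), whereas you correctly flag that distinctness is not forced by the stated hypotheses alone and is instead supplied by the surrounding context; this is a fair observation.
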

  \begin{proof}
   Since $\hi$ is $H$-equivariant, the element $a$ fixes $\hi(a^{+\infty})$ and $\hi(a^{-\infty})$. Since $a$ has infinite order, $\hi(a^{+\infty}) \neq \hi(a^{-\infty})$. Let $U,V \subset \hi(\p Y)$ be open sets with $\hi(a^{+\infty}) \subset U$ and $\hi(a^{-\infty}) \subset V$. Since $\hi$ is continuous, the sets $\hi^{-1}(U)$ and $\hi^{-1}(V)$ are open in $\p Y$ and contain the points $a^{+\infty}$ and $a^{-\infty}$, respectively. Since $a$ acts by North-South dynamics on $\p Y$, there exists $n \in \N$ so that $a^n(\p Y - \hi^{-1}(V)) \subset \hi^{-1}(U)$. Since $\hi$ is $H$-equivariant, $a^n(\hi(\p Y) - V) \subset U$. Thus, the element~$a$ acts by North-South dynamics on $\hi(\p Y)$ fixing $\hi(a^{\pm \infty})$. 
  \end{proof}

  \subsection{\texorpdfstring{$\CAT(0)$}{CAT(0)} groups with isolated flats}
  
  \begin{defn}
   Let $X$ be a $\CAT(0)$ space. For $k \geq 2$, a $k$-flat in $X$ is an isometrically embedded copy of Euclidean space $\E^k$. 
  \end{defn}

  \begin{defn} \label{def:cat0IFP}
   Let $X$ be a $\CAT(0)$ space, and suppose a group $G$ acts geometrically on $X$. The space $X$ has the {\it isolated flats property} if $X$ is not quasi-isometric to $\E^n$ and there is a non-empty $G$-invariant collection of flats $\cF$, of dimension at least two, such that the following conditions hold:
   \begin{enumerate}
    \item There exists a constant $D < \infty$ such that each flat in $X$ lies in the $D$-tubular neighborhood of some $F \in \cF$.
    \item For every $\rho < \infty$ there exists $\kappa(\rho)<\infty$ such that for any two distinct flats $F,F' \in \cF$, $\diam(N_\rho(F) \cap N_\rho(F')) <\kappa(\rho)$. 
   \end{enumerate}
   If a group $G$ is {\it $\CAT(0)$ with isolated flats} if $G$ acts geometrically on a $\CAT(0)$ space with the isolated flats property. 
  \end{defn}

  \begin{remark}
   By definition, a $\CAT(0)$ space with isolated flats contains a non-trivial flat of dimension at least two. By \cite[Lemma 3.1.2]{hruskakleiner}, a $\CAT(0)$ group $G$ with isolated flats has a non-trivial free abelian subgroup of rank at least two. Thus, if $H \leq G$ is hyperbolic, then $H$ has infinite-index in $G$. 
  \end{remark}

    The following theorem collects the results of work of Hruska--Kleiner~\cite{hruskakleiner} relevant to this paper. We refer the reader to \cite{hruskakleiner} for additional background.
    
    \begin{thm} \cite[Theorems 1.2.1, 1.2.2]{hruskakleiner} \label{thm:hruska_kleiner}
      Let $X$ be a $\CAT(0)$ space, and let $G$ be a group acting geometrically on $X$. 
     \begin{enumerate}
      \item The following are equivalent.
	  \begin{enumerate}
	   \item The space $X$ has the isolated flats property.
	   \item The space $X$ is relatively hyperbolic with respect to a family of flats $\cF$.
	   \item The group $G$ is a relatively hyperbolic group with respect to a collection of virtually abelian subgroups of rank at least two. 
	  \end{enumerate}
      \item If the space $X$ has the isolated flats property, then the visual boundary $\p X$ is a group invariant of $G$.
     \end{enumerate}
    \end{thm}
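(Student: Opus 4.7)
The plan is to treat the two parts of the theorem separately. For the equivalence in part (1), I would establish the cycle of implications (a)$\Rightarrow$(b)$\Rightarrow$(c)$\Rightarrow$(a), exploiting complementary characterizations of relative hyperbolicity. For (a)$\Rightarrow$(b), I would electrify (cone off) each flat in the $G$-invariant family $\cF$ to form a space $\hat{X}$ and verify the two ingredients of Farb's definition directly from Definition~\ref{def:cat0IFP}. Condition (1) controls where the "thick" parts of $\hat{X}$ can occur, and condition (2) yields bounded coset penetration for the peripheral flats; combined with the $\CAT(0)$ geometry in the complement of $\cF$, one gets $\delta$-hyperbolicity of $\hat{X}$.

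For (b)$\Rightarrow$(c), the $G$-invariance of $\cF$ and cocompactness of the action imply that the stabilizer of each $F \in \cF$ acts cocompactly on $F \cong \E^k$, so by the Flat Torus Theorem and Bieberbach the stabilizer is virtually abelian of rank at least two. Since only finitely many $G$-orbits of flats occur, the peripheral structure on $G$ is a finite collection of conjugacy classes, and relative hyperbolicity passes from the space to the group via the Svarc–Milnor-type correspondence. For (c)$\Rightarrow$(a), given any geometric action of $G$ on a $\CAT(0)$ space $X$ with virtually abelian peripheral subgroups, each peripheral subgroup $P$ stabilizes a flat $F_P \subset X$ by the Flat Torus Theorem. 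The substantive step is to verify the isolation conditions for $\cF = \{gF_P\}$; I would do this using projection theory for relatively hyperbolic spaces (Dru\c{t}u–Sapir, Sisto) to convert the coarse divergence of distinct peripheral cosets in $G$ into geometric isolation of the corresponding flats in $X$. This last implication is where I expect the main obstacle to lie: one needs to rule out "parallel flats" and "large coarse intersections" from purely coarse data, and this requires a careful interplay between the $\CAT(0)$ convexity of flats and the asymptotic geometry supplied by relative hyperbolicity.

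For part (2), I would show that any two geometric $G$-actions on $\CAT(0)$ spaces $X$ and $X'$ with isolated flats induce $G$-equivariantly homeomorphic visual boundaries $\p X \cong \p X'$. The Svarc–Milnor lemma produces a $G$-equivariant quasi-isometry $q \colon X \to X'$; the key observation is that since the flats in $X$ and $X'$ are the axes of the same peripheral subgroups of $G$, the map $q$ carries $\cF$ to within bounded Hausdorff distance of $\cF'$, sending each flat to a flat of the same dimension. The boundary extension is then built in two compatible pieces: on each sphere $\p F \subset \p X$ the quasi-isometry restricts to a quasi-isometry of Euclidean spaces, which is a bounded perturbation of a linear map and thus induces a canonical homeomorphism $\p F \to \p F'$; on the remaining "hyperbolic" part, one uses that quasi-isometries of $\hat X$ to $\hat X'$ extend continuously to the Gromov boundary. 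The technical heart is gluing these pieces together continuously at boundary points of flats, which uses the visibility properties of $\CAT(0)$ spaces with isolated flats and the fact that in such spaces every boundary point is either in the boundary of some $F \in \cF$ or is a conical limit point with respect to $\cF$.
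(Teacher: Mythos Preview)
The paper does not prove this statement at all: Theorem~\ref{thm:hruska_kleiner} is quoted directly from Hruska--Kleiner \cite[Theorems 1.2.1, 1.2.2]{hruskakleiner} and carries no proof in the present paper. It functions purely as a black-box input to the rest of the argument. Your proposal therefore cannot be compared to ``the paper's own proof,'' because there is none; you have instead outlined a plausible reconstruction of Hruska--Kleiner's original arguments.

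As a side comment on your sketch itself: the broad strategy is reasonable, but the step (c)$\Rightarrow$(a) is more delicate than you indicate. Relative hyperbolicity of $G$ with respect to virtually abelian subgroups does not by itself guarantee that \emph{every} flat in an arbitrary $\CAT(0)$ model space $X$ lies near one of the peripheral flats $gF_P$, which is condition~(1) of Definition~\ref{def:cat0IFP}; Hruska--Kleiner handle this via their Relatively Thin Triangle and Relative Fellow Traveller properties rather than purely through projection estimates. Similarly, for part~(2), continuity of the boundary map at points where a sequence of conical limit points accumulates on a flat sphere is the genuinely hard case, and your sketch does not address how the two ``pieces'' are made to agree there.
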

    
    \begin{lemma} \label{cor:char_isoms}
     Let $G$ act geometrically on a $\CAT(0)$ space $X$ with isolated flats. If $g \in G$ has infinite order, then either $g$ acts by translation on a flat in $X$ or $g$ has rank-one. 
    \end{lemma}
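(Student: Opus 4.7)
The plan is to prove the contrapositive: if $g$ is not rank-one, then $g$ stabilizes some flat $F' \in \cF$ and acts on it as an axial isometry with positive translation length, i.e., it translates along an axis contained in $F'$.

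Since $G$ acts properly and cocompactly on $X$, every element of $G$ is semisimple, so an infinite-order $g$ is axial with some axis $\gamma \subset X$. Assume $g$ is not rank-one. Then by definition $\gamma$ bounds an isometrically embedded Euclidean half-plane $H \subset X$. I would first upgrade $H$ to a genuine $2$-flat: the parallel set of $\gamma$ splits as a product $\R \times Y$ with $Y$ a CAT$(0)$ space, and $H$ determines a geodesic ray in $Y$ emanating from the point $[\gamma]$. Combining the action of $\langle g \rangle$ on $Y$ with cocompactness of $G$ on $X$ and the isolated flats property, one should be able to extract a bi-infinite limit geodesic in $Y$, producing a $2$-flat $F \subset X$ containing $\gamma$.

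Once such an $F$ is in hand, the isolated flats axioms close the argument. Condition~(1) of Definition~\ref{def:cat0IFP} places $F \subset N_D(F')$ for some $F' \in \cF$. Since $g$ preserves $\gamma$, the unbounded line $\gamma$ then lies in $N_D(g^n F')$ for every $n \in \Z$, and condition~(2) forces $g^n F' = F'$ for all $n$. Thus $g$ stabilizes $F' \in \cF$; being infinite-order and semisimple on the Euclidean space $F'$, it has positive translation length along some line in $F'$, which is what it means to act by translation on a flat.

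The main obstacle I expect is the half-plane extension step: in an arbitrary CAT$(0)$ space a half-plane need not extend to a flat, and the sketch above needs a careful compactness argument inside the parallel set $Y$ that really uses the isolated flats hypothesis. A cleaner alternative that bypasses this step entirely is to invoke Theorem~\ref{thm:hruska_kleiner}: the pair $(G, \cP)$, where $\cP$ consists of the (virtually abelian, rank $\geq 2$) stabilizers of the flats in $\cF$, is relatively hyperbolic. Then $g$ is either conjugate into some $P \in \cP$, in which case it directly stabilizes a flat of $\cF$ and acts axially on it, or $g$ is loxodromic in the relatively hyperbolic sense with conical fixed points on the Bowditch boundary; via the standard isolated-flats dictionary between peripheral points and boundary spheres of elements of $\cF$, the latter case forces any axis of $g$ to avoid the boundary spheres of flats and hence to be rank-one, contrary to our assumption.
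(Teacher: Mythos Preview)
Your second route---using the relatively hyperbolic dichotomy---is essentially the paper's argument, but you have hidden the same obstacle you flagged in the first route rather than bypassed it. The implication ``loxodromic on the Bowditch boundary $\Rightarrow$ rank-one in $X$'' is exactly the contrapositive of ``axis bounds a half-plane $\Rightarrow$ axis endpoints lie on some $\partial F'$ with $F' \in \cF$,'' and that still needs the half-plane to be trapped near a flat. You invoke a ``standard isolated-flats dictionary,'' but the dictionary (Tran's map $\pi \colon \partial X \to \partial(G,\cP)$) only tells you that boundary spheres of flats collapse to parabolic points; it does not by itself tell you that the boundary of an arbitrary half-plane lands on such a sphere.

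The paper closes this gap as follows. It uses part (1)(b) of Theorem~\ref{thm:hruska_kleiner}: $X$ is hyperbolic \emph{relative to the family of flats}, which forces any half-flat to lie in a bounded neighborhood of some flat $F$. Hence the endpoints $\alpha^\pm$ of the axis lie in $\partial F \subset \partial X$. Tran's map then sends $\partial F$ to a single parabolic point $z$, so $g$ fixes $z$; Bowditch's theory places $g$ in a conjugate of a peripheral (virtually abelian of rank $\ge 2$), and the Flat Torus Theorem finishes. Note that the paper does not try to show $g$ stabilizes a flat of $\cF$ directly via conditions~(1)--(2) as in your first approach; it instead lands $g$ in a peripheral subgroup and produces a flat via the Flat Torus Theorem. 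Your paragraph~3 argument (using $G$-invariance of $\cF$ and condition~(2) to pin down $F'$) is correct once you have the axis inside $N_D(F')$, and would give a slightly different but valid endgame---but you still need the half-flat-near-a-flat step to get there.
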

    \begin{proof}
     Suppose the element $g$ does not have rank one. Then an axis for $g$ bounds a half-flat $F_0$ in~$X$.       
     By Theorem~\ref{thm:hruska_kleiner}, the space $X$ is hyperbolic relative to a family of flats, so this half-flat $F_0$ must be contained in a bounded neighborhood of a flat $F$ in $X$. Let $\ell$ be an axis for~$g$. The endpoints $\alpha^+, \alpha^-$ of the line $\ell$ lie in the boundary of the half-flat $F_0$ in $\p X$, and hence lie in the boundary of the flat $F$ in $\p X$. By work of Tran~\cite[Main Theorem]{tran}, there is a $G$-equivariant continuous map  $\pi: \p X\rightarrow \p(G, \cP)$, where $\p(G, \cP)$ denotes the Bowditch boundary for $G$ defined with respect to the peripheral structure given by Hruska--Kleiner in Theorem~\ref{thm:hruska_kleiner}. Moreover, there exists a parabolic fixed point $z \in \p(G,\cP)$ so that $\pi(\p F) = z$. Since $g$ fixes $\alpha^+$ and $\alpha^-$, the element $g$ fixes $\pi(\alpha^{\pm}) = z \in \p (G, \cP)$.  Thus, by work of Bowditch~\cite{bowditch12} (see also~\cite[Section 3]{hruska10}), the element $g$ is contained in a conjugate of a peripheral subgroup of $G$. Thus, $g$ is contained in a virtually abelian subgroup of $G$ of rank at least two; hence, $g$ acts by translation on a flat in $X$ by the Flat Torus Theorem~\cite[Corollary II.7.]{bridsonhaefliger}.  
    \end{proof}
          
    %%%%%%%%%%%%%%%%%%%%%%%%%%%%%%%%%%
    %%% Proof of Lemma 2.11 from definiton: 
    %%%Suppose the element $g$ does not have rank one. Then an axis for $g$ bounds a half-flat $F_0$ in $X$. By Theorem~\ref{thm:hruska_kleiner}, the space $X$ is hyperbolic relative to a family of flats, so this half-flat $F_0$ must be contained in a bounded neighborhood of a flat $F$ in $X$. By (1) in Definition~\ref{def:cat0IFP}, we may assume $F$ is a maximal flat and is in the collection $\mathcal{F}$. Let $\ell_g$ be an axis for $g$. Then $\ell_g$ is in a bounded neighborhood of $F$ and $gF$. Thus $F$ and $gF$ do not satisfy (2) in Definition~\ref{def:cat0IFP}, unless they are equal. Thus $g$ stabilizes the flat $F$. 
    
   \begin{lemma} \label{lemma:prod_cat0}
    Let $G$ be $\CAT(0)$ with isolated flats. If $A \times B \leq G$ and $A$ and $B$ are infinite, then $A$ and $B$ are virtually abelian.
   \end{lemma}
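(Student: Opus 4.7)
The plan is to invoke the relatively hyperbolic structure on $G$ from Theorem~\ref{thm:hruska_kleiner}: $G$ is hyperbolic relative to a collection $\cP$ of maximal virtually abelian subgroups of rank at least two. The goal is to show that both $A$ and $B$ lie inside a single conjugate of a peripheral subgroup, and hence are virtually abelian.

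First, I would choose infinite-order elements $a \in A$ and $b \in B$, which exist because every infinite subgroup of a $\CAT(0)$ group with isolated flats has an element of infinite order (via, e.g., the Tits alternative available for relatively hyperbolic groups with virtually abelian peripherals). Since $A \cap B = \{1\}$ inside the direct product $A \times B$, any equation $a^p = b^q$ with $p \neq 0$ would force $a^p \in A \cap B = \{1\}$, contradicting the infinite order of $a$. Because $a$ and $b$ commute, $\la a,b\ra$ is a finitely generated abelian group of torsion-free rank at most two; rank one would produce such a nontrivial relation between a power of $a$ and a power of $b$, so $\la a,b\ra \cong \Z^2$.

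Next, applying the Flat Torus Theorem, this $\Z^2$ acts by translations on a $2$-flat $F \subset X$, and its stabilizer $P := \Stab_G(F)$ is a maximal virtually abelian subgroup of rank at least two, hence a conjugate of a peripheral in $\cP$; in particular $a \in P$. Thus $a$ is a parabolic element of the relatively hyperbolic structure on $(G,\cP)$ and fixes a unique point $z \in \p(G,\cP)$, with uniqueness following from the almost malnormality of peripheral subgroups (if $a$ fixed two distinct parabolic points, it would sit in the finite intersection of their stabilizers, contradicting its infinite order). Any $h \in C_G(a)$ satisfies $h \cdot z = z$, so $h \in \Stab_G(z) = P$. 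Consequently $B \leq C_G(a) \leq P$, and symmetrically $A \leq C_G(b) \leq P$, so both $A$ and $B$ are virtually abelian.

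The main subtlety is the claim $C_G(a) \leq P$; this rests on standard facts from the theory of relatively hyperbolic groups concerning parabolic elements and their unique parabolic fixed points, as in \cite{bowditch12,hruska10}. Otherwise the argument is driven entirely by the Flat Torus Theorem together with the direct-product hypothesis, which combine to place the auxiliary $\Z^2$ inside a single peripheral subgroup.
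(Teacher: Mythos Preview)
Your proof is correct and follows the same overall strategy as the paper: pick infinite-order $a \in A$ and $b \in B$, use the Flat Torus Theorem to place $\langle a,b\rangle \cong \Z^2$ inside a peripheral subgroup, and then exploit the relatively hyperbolic structure to force all of $A$ and $B$ into that peripheral. The difference lies in the final step. The paper argues element-by-element: for any two infinite-order $a,a' \in A$, the flats spanned with $b$ share an axis of $b$, so Tran's map $\p X \to \p(G,\cP)$ collapses both to the same parabolic point, whence $a$ and $a'$ lie in the same peripheral. Your route is more direct: once $a$ is parabolic with a unique fixed point $z \in \p(G,\cP)$, the whole centralizer $C_G(a)$---and hence all of $B$---fixes $z$ and lies in $\Stab_G(z)$. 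This avoids invoking Tran's theorem and handles $B$ in one stroke rather than pairwise. One small imprecision: the flat $F$ produced by the Flat Torus Theorem need not itself lie in the distinguished family $\cF$, so $\Stab_G(F)$ is not obviously a \emph{maximal} virtually abelian subgroup; what you actually need (and what holds) is that $\langle a,b\rangle$ is contained in some conjugate of a peripheral, and then take $P$ to be that peripheral.
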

   \begin{proof}
    By the Flat Torus Theorem~\cite[Theorem II.7.1]{bridsonhaefliger}, every infinite-order element of $A$ has an axis that spans a flat with an axis of every infinite-order element of the subgroup $B$. Thus if $a,a' \in A$ and $b \in B$ are infinite, then an axis for $b$ is contained in a flat $F$ stabilized by $\la a,b \ra$ and an axis for $b$ is contained in a flat $F'$ stabilized by $\la a',b \ra$. As in the proof of Lemma~\ref{cor:char_isoms}, there is a continuous $G$-equivariant map $\pi: \p X \rightarrow \p (G, \cP)$. Since an axis for $b$ is contained in the flats $F$ and $F'$, there exists a parabolic point $z \in \p(G, \cP)$ so that $\pi(F) = \pi(F') = z$ and the elements $a$, $a'$, and $b$ stabilize $z$ by \cite[Main Theorem]{tran}. Thus, by work of Bowditch~\cite{bowditch12}, the elements $a$ and $a'$ belong to the same conjugate of a peripheral subgroup of $G$. Thus, $A$ is virtually abelian. Similarly, $B$ is virtually abelian. 
   \end{proof}
    %%%%%%%%%%%%%%%%%%%%%%%%%%%%%%55
    %%%Proof of Lemma 2.12 from definition
    %%%By the Flat Torus Theorem~\cite[Theorem II.7.1]{bridsonhaefliger}, every infinite-order element of $A$ has an axis that spans a flat with an axis of every infinite-order element of the subgroup $B$. Thus if $a,a' \in A$ and $b \in B$ are infinite, then an axis for $b$ is contained in a flat $F$ stabilized by $\la a,b \ra$ and an axis for $b$ is contained in a flat $F'$ stabilized by $\la a',b \ra$. After possibly passing to larger flats and up to taking $D$-neighborhoods, we may assume $F$ and $F'$ are in  the collection $\mathcal{F}$ as in Definition~\ref{def:cat0IFP}. Since $F$ and $F'$ have neighborhoods that intersect in an unbounded set, the product region containing the two axes of $b$, using (2) we conclude that $F = F'$ and  $a$ and $a'$ stabilize $F=F'$. Thus, repeating the process for all pairs of generators of $A$, which is finitely generated, we get that $A$ stabilizes a flat. Thus by Bieberbach theorem, $A$ is virtually abelian. Similarly, $B$ is virtually abelian. 

%%%%%%%%%%%%%
%%%%%%%%%%%%%'
 \subsection{Stallings--Dunwoody and JSJ decompositions} \label{sec:grushko_jsj}
 %%%%%%%%%%%%%
%%%%%%%%%%%%%'
  
    To describe the structure of normal subgroups, we apply the Stallings--Dunwoody and JSJ decomposition theories. We recall the relevant statements below. For background on graphs of groups, see \cite{scottwall,serre}. The graph of groups decomposition given in the next theorem is called a {\it Stallings--Dunwoody decomposition} of $G$.
  
  \begin{thm} \cite{dunwoody, stallings} \label{thm:dunwoody}
    If $G$ is a finitely presented group, then $G$ splits as a finite graph of groups with finite edge groups and vertex groups that have at most one end. 
  \end{thm}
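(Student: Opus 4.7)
The plan is to derive the theorem by combining Stallings' structure theorem for ends of groups with Dunwoody's accessibility theorem, the latter being the essential use of finite presentability. The overall strategy is to recursively split $G$ over finite subgroups, and to argue that the recursion must terminate.

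First I would handle the base cases: if $G$ is finite or has at most one end, the trivial graph of groups consisting of a single vertex labelled $G$ satisfies the conclusion. If $G$ is two-ended it is virtually $\Z$ and can be expressed as a finite graph of groups with finite edge groups and finite vertex groups (which have at most one end). If $G$ has infinitely many ends, Stallings' theorem applies and produces a non-trivial splitting of $G$ as either an amalgamated free product $G = A *_C B$ or an HNN extension $G = A *_C$, with $C$ a finite subgroup. Since $G$ is finitely generated, the vertex groups are finitely generated as well. If any vertex group still has infinitely many ends, I would apply Stallings' theorem again to refine the graph of groups. Iterating produces a sequence of finer and finer graph-of-groups decompositions of $G$, all with finite edge groups.

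The main obstacle is termination of this refinement process, and this is the heart of the matter. Without some additional hypothesis the process could a priori continue forever, so some finiteness input is required. This is exactly Dunwoody's accessibility theorem for finitely presented groups. The plan for this step is to represent $G$ as $\pi_1(K)$ for a finite $2$-complex $K$ and translate splittings of $G$ over finite subgroups into \emph{tracks} in $K$, namely properly embedded $1$-submanifolds transverse to the $1$-skeleton whose lifts to the universal cover $\widetilde{K}$ are properly embedded separating hypersurfaces. A refinement of a graph of groups corresponds to adjoining a new track disjoint from and non-parallel to the previous ones. Dunwoody's key technical estimate bounds the number of pairwise disjoint, pairwise non-parallel, non-trivial tracks in $K$ by a combinatorial invariant of $K$ (essentially controlled by the number of $2$-cells). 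This forces the refinement procedure to halt after finitely many steps.

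At the terminal stage, no vertex group can admit a further non-trivial splitting over a finite subgroup, so by the converse direction of Stallings' theorem no vertex group has infinitely many ends; hence each vertex group has at most one end. By construction all edge groups are finite and the underlying graph is finite, yielding the required Stallings--Dunwoody decomposition.
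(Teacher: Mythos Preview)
The paper does not prove this theorem; it is quoted from the literature with a citation to \cite{dunwoody, stallings} and no argument is supplied. Your sketch is the standard route to the result and is correct in outline: Stallings' theorem produces splittings over finite subgroups whenever a finitely generated group has more than one end, and Dunwoody's accessibility (via tracks on a finite $2$-complex realizing a finite presentation) bounds the length of any chain of such refinements, forcing termination at a decomposition whose vertex groups have at most one end.

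One small point worth tightening: you note that the vertex groups of a splitting of a finitely generated group over finite edge groups are again finitely generated, which is what you need to reapply Stallings' theorem. You should also be explicit that Dunwoody's bound applies to the entire $G$-tree at once (equivalently, to the full system of disjoint tracks in the fixed finite $2$-complex for $G$), rather than being invoked separately on each vertex group; otherwise one would need the vertex groups to be finitely presented, which is not automatic.
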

  
  \begin{remark}
   \label{remark_one_ended}
   If the group $G$ has one end, then the graph of groups decomposition given by the above theorem is trivial.
   This is the case in particular when $G$ has a finitely generated, infinite, infinite-index normal subgroup (as is the set up of Theorems~\ref{thm_noCT_hyp} and~\ref{thm_noCT_IFP}), since an infinite-ended group cannot have such a subgroup.
   This was proved by Karrass--Solitar; it is a special case of \cite[Theorem 10]{karrass_solitar_amalgam} when $G$ is an amalgam and of \cite[Theorem 9]{karrass_solitar_hnn} when $G$ is an HNN extension.
   (Alternatively, appealing to more modern technology, infinite-ended groups have positive first $L^2$-Betti number, which is an obstruction to having such a subgroup.)
  \end{remark}
  
  \begin{lemma} \label{lemma_one_end}
   Let $H_v$ be a one-ended vertex group in a Stallings--Dunwoody decomposition of a finitely presented group $H$, and let $\phi \in \Out(H)$. Then there exists $m \in \N^+$ so that $\phi^m([H_v]) = [H_v]$. 
  \end{lemma}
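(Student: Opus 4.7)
The plan is to show that the set of conjugacy classes of one-ended vertex groups appearing in a Stallings--Dunwoody decomposition of $H$ is an intrinsic invariant of $H$. Once that is established, $\Out(H)$ acts on this finite set by permutations, and the conclusion follows because any element of a finite symmetric group has finite order.

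The key step is an invariance lemma: if $T_1, T_2$ are Bass--Serre trees of two Stallings--Dunwoody decompositions of $H$, and $A \leq H$ is a one-ended vertex stabilizer in $T_1$, then $A$ coincides with a vertex stabilizer in $T_2$. To prove this, I would consider the action of $A$ on $T_2$. The edge stabilizers of this action are finite (being subgroups of $H$-edge stabilizers), so if $A$ had no global fixed point it would act minimally and nontrivially on its minimal invariant subtree, splitting $A$ nontrivially over a finite subgroup; this contradicts Stallings' theorem on ends of groups, since a one-ended group does not split nontrivially over a finite subgroup. Hence $A$ fixes some $v \in T_2$; let $B = \Stab_{T_2}(v)$. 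Then $A \leq B$, and $B$ is a vertex group of the second decomposition, hence has at most one end; as it contains the infinite group $A$, it is itself one-ended. Running the same argument with roles reversed, $B$ fixes a vertex of $T_1$ whose stabilizer $A'$ satisfies $A \leq B \leq A'$; but if $A$ fixed two distinct vertices of $T_1$, it would fix the edge path between them and therefore be finite, a contradiction. Thus $A = A' = B$.

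To deduce the lemma, apply this to the original decomposition of $H$ and to its image under any $\alpha \in \Aut(H)$, which is itself a Stallings--Dunwoody decomposition with vertex groups $\{\alpha(H_{v_i})\}$. By the invariance lemma, $\alpha$ induces a permutation of the finite set $\mathcal{V}$ of conjugacy classes of one-ended vertex groups in the original decomposition. Since inner automorphisms preserve every conjugacy class, this descends to a homomorphism $\Out(H) \to \mathrm{Sym}(\mathcal{V})$. The image of $\phi$ has finite order, so $\phi^m([H_v]) = [H_v]$ for some $m \in \N^+$.

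The only genuinely nontrivial input is the invariance lemma, whose crux is the observation that a one-ended subgroup must act elliptically on any tree with finite edge stabilizers; the remainder of the argument is straightforward Bass--Serre bookkeeping.
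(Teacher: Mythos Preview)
Your proposal is correct and follows essentially the same route as the paper: both arguments establish that the conjugacy classes of one-ended vertex groups are independent of the chosen Stallings--Dunwoody decomposition via the sandwich $A \leq B \leq A'$ together with uniqueness of the fixed vertex, and then use that $\Out(H)$ permutes this finite set. Your write-up is slightly more explicit than the paper's in justifying why a one-ended group acts elliptically on a tree with finite edge stabilizers, but the structure is the same.
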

  \begin{proof}
  Let $T$ and $T'$ be the Bass--Serre trees of two Stallings--Dunwoody decompositions of $H$. We claim that $T$ and $T'$ have the same one-ended vertex stabilizers up to conjugation. 
  
  To prove the claim, let $H_1, \ldots, H_n \leq H$ and $H_1', \ldots H_m' \leq H$ be representatives of one-ended vertex stabilizers of $T/H$ and $T'/H$, respectively. Since $H_i$ is one-ended, the subgroup $H_i$ fixes a unique vertex of the tree $T'$. Thus $H_i \leq gH_j'g^{-1}$ for some $g \in H$ and $1 \leq j \leq m$. Similarly, the group $gH_j'g^{-1}$ is one-ended, so $gH_j'g^{-1} \leq h H_k h^{-1}$ for some $h \in H$ and $1 \leq k \leq n$. Thus, $H_i \leq gH_jg^{-1} \leq hH_kh^{-1}$. The subgroups $H_i$ and $hH_kh^{-1}$ fix unique vertices in the tree $T$, so $H_i=H_k$. Therefore, for every $1\leq i \leq n$, there exists $j \in \{1, \ldots, m\}$ such that $[H_i] = [H_j']$ and vice versa, concluding the proof of the claim. 
  
  Let $H_v$ be a one-ended vertex stabilizer of the tree $T$ and let $T' = T\cdot \phi$. Since $T$ and $T'$ have the same one-ended vertex stabilizers, there exists an $m \in \N^+$ such that $\phi^m([H_v]) = [H_v]$.  
  \end{proof}
  
  \begin{remark}
   Lemma~\ref{lemma_one_end} is a specific instance of a general fact mentioned by Guirardel--Levitt~\cite[Introduction]{guirardellevitt17}: Two $G$-trees belonging to the same deformation space over a class of subgroups $\cA$ have the same vertex stabilizers, provided one restricts to the vertex groups not in $\cA$. 
  \end{remark}

    The graph of groups decomposition for a hyperbolic group or a $\CAT(0)$ group with isolated flats provided by the next two theorems is called the {\it JSJ decomposition} of $G$ over two-ended or elementary subgroups, respectively. For $1$-ended hyperbolic groups that are not Fuchsian, we use the language due to Bowditch~\cite{bowditch}; see \cite{guirardellevitt17} for background on JSJ decompositions. We note that there are various uses of the term ``elementary'' in the literature. To avoid confusion, we reserve the term in this paper for the JSJ theory (see Definition~\ref{def:elementary}), and write the assumptions explicitly in other cases.
  
  \begin{defn}
     A {\it Fuchsian group} is a finitely generated group that is not finite or two-ended and acts properly discontinuously on the hyperbolic plane. A {\it cocompact Fuchsian group} is a Fuchsian group that acts cocompactly on the hyperbolic plane. In particular, a cocompact Fuchsian group is virtually a {\it closed surface group}, the fundamental group of a closed surface. 
  \end{defn}
  
  \begin{defn} \cite{bowditch}  
    A {\it bounded Fuchsian group} is a Fuchsian group that is convex cocompact (i.e., it acts cocompactly on the convex hull of its limit set in $\Hy^2$) but is not virtually a closed surface group. The convex core of the quotient is a compact orbifold with non-empty boundary consisting of a disjoint union of compact $1$-orbifolds. The {\it peripheral subgroups} are the maximal two-ended subgroups which project to the fundamental groups of the boundary $1$-orbifolds. A {\it hanging Fuchsian} subgroup~$H$ of a group is a virtually-free quasi-convex subgroup together with a collection of {\it peripheral} two-ended subgroups, which arise from an isomorphism of $H$ with a bounded Fuchsian group. A {\it full quasi-convex subgroup} of a group $G$ is a quasi-convex subgroup that is not a finite-index subgroup of any strictly larger subgroup of $G$. 
  \end{defn}
  
   \begin{thm}\cite[Thm 0.1]{bowditch} \label{thm:bow_jsj}
    Let $G$ be a one-ended hyperbolic group that is not Fuchsian. There is a canonical \emph{JSJ decomposition} of $G$ as the fundamental group of a graph of groups such that each edge group is 2-ended and each vertex group is either (1)  $2$-ended; (2) maximal hanging Fuchsian; or, (3) a maximal quasi-convex subgroup not of type (2). These types are mutually exclusive, and no two vertices of the same type are adjacent. Every vertex group is a full quasi-convex subgroup. Moreover, the edge groups that connect to any given vertex group of type (2) are precisely the peripheral subgroups of that group.      
  \end{thm}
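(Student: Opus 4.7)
The plan is to follow Bowditch's approach of constructing the JSJ tree directly from the topology of the Gromov boundary $\p G$. Since $G$ is a one-ended hyperbolic group, $\p G$ is a compact, connected, metrizable space, and by Bestvina--Mess it is locally connected. The central idea is that splittings of $G$ over 2-ended subgroups correspond to ways of cutting $\p G$ at pairs of points; one assembles the combinatorics of all such cut pairs into a canonical simplicial $G$-tree $T$ and reads off the JSJ decomposition as the quotient graph of groups $T/G$.

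First I would analyze the cut-pair structure of $\p G$. A cut pair is an unordered pair $\{x,y\} \subset \p G$ whose removal disconnects $\p G$; its setwise stabilizer in $G$ is 2-ended, since an infinite subgroup fixing a pair of points in $\p G$ is virtually cyclic in the hyperbolic setting. I would introduce an equivalence relation on cut pairs (two are equivalent if they can be joined by a chain of pairwise interlinked cut pairs) and sort the equivalence classes into two types: \emph{isolated} classes, each consisting essentially of a single $G$-orbit of cut pair, and \emph{circular} classes, whose members are cyclically arranged around a topological circle embedded in $\p G$. A circular class is preserved by a subgroup of $G$ that acts on the circle of cut pairs; by a further argument this action can be promoted to a proper cocompact action on $\Hy^2$, identifying the subgroup with a bounded Fuchsian group and supplying the hanging Fuchsian vertex groups.

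Next I would construct the tree $T$, whose vertices correspond to: (a) $G$-orbits of circular equivalence classes (hanging Fuchsian vertex groups); (b) $G$-orbits of isolated cut pairs that are shared between distinct pieces (2-ended vertex groups); and (c) $G$-orbits of maximal connected subsets of $\p G$ obtained by removing all cut pairs from the previous items (rigid vertex groups — maximal quasi-convex subgroups not of hanging Fuchsian type). Edges encode adjacency via containment of the associated subsets in $\p G$, and each edge stabilizer is 2-ended. Mutual exclusivity of types, non-adjacency of same-type vertices, and canonicity of the construction follow automatically from the fact that the whole construction depends only on $\p G$ with its $G$-action. Quasi-convexity and fullness of each vertex group are consequences of the fact that it stabilizes a closed connected subset of $\p G$ whose coarse convex hull in a Cayley graph of $G$ admits a geometric action; the final statement about peripheral subgroups of hanging Fuchsian vertices amounts to observing that the incident cut pairs are exactly the ones carried by the boundary of the associated hyperbolic orbifold.

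The main obstacle I expect is \emph{finiteness} of the quotient graph $T/G$, i.e., showing there are only finitely many $G$-orbits of cut pairs, of circular classes, and of rigid pieces, so that one actually obtains a finite graph of groups. This is the most technical part of Bowditch's argument and combines Dunwoody's accessibility theorem for finitely presented groups with a delicate analysis of how cut pairs in $\p G$ can accumulate and interleave under the $G$-action. A secondary subtlety is verifying the various \emph{maximality} claims: for example, that no strictly larger Fuchsian subgroup of $G$ contains a given hanging Fuchsian vertex group — which is handled by arguing that such an enlargement would force an enlargement of the associated circle of cut pairs, contradicting the construction of the equivalence classes.
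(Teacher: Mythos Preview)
The paper does not prove this theorem; it is quoted from Bowditch~\cite[Thm 0.1]{bowditch} as background in the preliminaries, with no proof given. Your sketch of Bowditch's boundary cut-pair construction is consistent with the Remark immediately following the statement, which notes that the decomposition is built from the topology of $\p G$, so there is nothing further to compare.
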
 

  \begin{remark}
   The JSJ decomposition due to Bowditch is constructed via the topology of the visual boundary of the hyperbolic group $G$. It follows from construction that any automorphism of $G$ induces an isomorphism of the Bass--Serre tree of the JSJ decomposition.
  \end{remark}
  
  For many relatively hyperbolic groups $(G, \cP)$, there is also a canonical JSJ decomposition due to Guirardel--Levitt~\cite{guirardellevitt11} which is invariant under the subgroup of $\Out(G)$ that preserves the set of conjugacy classes of the peripheral subgroups of $G$. Denote this subgroup by $\Out(G, \cP)$. We state their results in the special case of a $\CAT(0)$ group with isolated flats. In this special case, $\Out(G, \cP) = \Out(G)$ because the set of peripheral subgroups is the set of maximal abelian subgroups of $G$ which is preserved by every automorphism.  
  
  \begin{defn} \label{def:elementary}
   Let $G$ be hyperbolic relative to a finite collection $\cP = \{P_1, \ldots, P_k\}$. A subgroup of $G$ is {\it elementary} if it is either contained in a conjugate of some $P_i \in \cP$ or if it is infinite, virtually cyclic, and not contained in any $P_i \in \cP$.
  \end{defn}
  
  %%% The followint theorem is true for G, which is one-ended relative to $\cP$, that is, there is no non-trivial tree with finite edge stabilizers in which each $P_i$ is elliptic. If $G$ is one-ended, then it is also one-ended relative to $\cP$. Therefore, we state the next theorem for $G$ one-ended, which is what we need later. 
   \begin{thm} \label{thm:jsj_gl} \cite[Theorem 4]{guirardellevitt11}\cite[Section 3]{guirardellevitt15}
   % Let $G$ be a $\CAT(0)$ group with isolated flats, and let $\cP$ be the collection of maximal virtually abelian subgroups of rank at least two. 
    Let $G$ be hyperbolic relative to $\cP$, a collection of maximal virtually abelian subgroups of rank at least two. Also suppose $G$ is one-ended. Then there is an elementary JSJ tree for $G$ relative to $\cP$ which is invariant under $\Out(G)$. The vertex groups in the JSJ decomposition have one of four types: (0.a) rigid; (0.b) maximal hanging Fuchsian; (1.a) maximal parabolic: conjugate to a $P_i \in \cP$; and (1.b) maximal loxodromic: maximal virtually cyclic subgroup of $G$ and not contained in a $P_i \in \cP$. Vertex groups of type (0.*) are only adjacent to vertex groups of type (1.*).
    \end{thm}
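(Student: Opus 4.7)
The plan is to apply the general relative JSJ machinery of Guirardel--Levitt to the class of elementary subgroups from Definition~\ref{def:elementary} and verify that the resulting canonical tree has the stated vertex group structure. First I would fix $\cA$ to be the class of elementary subgroups, playing the role of the admissible edge groups, and take the peripheral collection $\cP$ as the elliptic subgroups that must be preserved. Since $(G,\cP)$ is relatively hyperbolic and $G$ is one-ended, the space of $(\cA,\cP)$-splittings is non-trivial and satisfies the accessibility hypotheses needed to construct a JSJ deformation space of $G$ relative to $\cP$ over $\cA$.

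Next, to pass from a deformation space to a canonical $\Out(G)$-invariant tree, I would apply the tree of cylinders construction. The commensurability relation on infinite elementary subgroups has the required features: two maximal virtually cyclic loxodromic subgroups are commensurable iff they share a pair of fixed points, and distinct cosets of peripheral subgroups meet in finite subgroups. This is exactly the setting in which the tree of cylinders is well-defined and canonical. The resulting tree $T_{JSJ}$ is preserved, up to equivariant isomorphism, by every automorphism of $G$ that permutes conjugacy classes of peripheral subgroups. Since every automorphism of $G$ permutes the maximal virtually abelian subgroups of rank at least two, we have $\Out(G,\cP)=\Out(G)$, so $T_{JSJ}$ is $\Out(G)$-invariant.

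Finally, I would classify the vertex stabilizers. Edge stabilizers are elementary by construction, and vertex groups are either elementary themselves or contain elementary subgroups properly. Elementary vertex stabilizers split into the two cases (1.a) and (1.b), according to whether they are conjugate into some $P_i \in \cP$ or are maximal virtually cyclic and loxodromic in $(G,\cP)$; maximality is built into the tree of cylinders. Non-elementary vertex groups are either rigid (type (0.a)) or flexible. The general classification of flexible vertices in relative JSJ decompositions over slender subgroups identifies the flexible ones as hanging Fuchsian subgroups whose peripheral structure is inherited from the incident edges, giving type (0.b). The non-adjacency statement between two vertices of type (0.*) follows from the collapse structure of the tree of cylinders, which inserts a (1.*) vertex between any two non-elementary vertices.

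The main obstacle is the identification of flexible vertex groups as hanging Fuchsian: this requires the detailed combinatorial analysis of flexible vertices carried out in the relatively hyperbolic setting and constitutes the technical heart of the cited results \cite{guirardellevitt11, guirardellevitt15}. The verification that the tree of cylinders actually lies in the JSJ deformation space (and not strictly below it) is also delicate and relies on the specific form of the commensurability classes of elementary subgroups in a $\CAT(0)$ group with isolated flats.
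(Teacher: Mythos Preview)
The paper does not give its own proof of this statement: Theorem~\ref{thm:jsj_gl} is quoted from the literature with citations to \cite[Theorem 4]{guirardellevitt11} and \cite[Section 3]{guirardellevitt15}, and no argument is supplied in the paper itself. Your proposal is a reasonable outline of the strategy those references actually follow (relative JSJ deformation space over elementary subgroups, passage to a canonical tree via the tree of cylinders, and the rigid/flexible dichotomy for non-elementary vertex groups), and you correctly flag that the identification of flexible vertices as hanging Fuchsian is the technical core carried out in the cited works. So there is nothing to compare against here beyond noting that your sketch aligns with the approach of Guirardel--Levitt; the present paper simply invokes their result as a black box.
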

  
  \begin{notation}
    Let $G$ be a one-ended group that is either hyperbolic or $\CAT(0)$ with isolated flats, and let $T$ be the JSJ tree for $G$ given by either Theorem~\ref{thm:bow_jsj} or Theorem~\ref{thm:jsj_gl}, respectively. Let $\Out^0(G) \leq \Out(G)$ denote the outer automorphisms of $G$ that act trivially on the graph $T/G$. The subgroup $\Out^0(G)$ has finite index in the group $\Out(G)$.  
  \end{notation}
    
    A key feature of the JSJ decompositions considered in this paper is that outer automorphisms in $\Out^0(G)$ ``restricted to'' rigid vertex groups have finite-order. This fact is made more precise as follows. 
 
 \begin{remark} \cite[Section 2.6]{guirardellevitt15} Let $G$ be a one-ended group that is either hyperbolic or $\CAT(0)$ with isolated flats, and let $T$ be the JSJ tree for $G$. For a vertex $v$ of $T/G$, there is a map $\rho_v \colon \Out^0(G) \to \Out(G_v)$ given as follows: choose a representative $\tilde{v}$ of $v$ in $T$ that is stabilized by $G_v$. For $\phi \in \Out^0(G)$, choose a representative $\Phi$ of $\phi$ such that the induced map on $T$ fixes $\tilde{v}$. Then $\rho_v(\phi)$ is represented by the restriction of $\Phi$ to $G_v$. We say $\rho_v(\phi)$ is the {\it restriction} of $\phi$ to $G_v$.  \end{remark}
  \begin{thm} \cite{paulin,bestvinafeighn95} \label{thm:hyp_rig_fin}
    Let $G$ be hyperbolic, and let $G_v \leq G$ be a vertex group of type~(3) in Theorem~\ref{thm:bow_jsj}. Suppose $\phi \in \Out^0(G)$, and let $\phi'$ be the restriction of $\phi$ to $G_v$. Then $\phi'$ has finite order. 
  \end{thm}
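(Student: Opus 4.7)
The plan is to argue by contradiction: if $\phi'$ has infinite order, then the Paulin--Bestvina--Feighn machinery produces a splitting of $G_v$ over a two-ended subgroup that is compatible with the incident edge groups of $v$, violating the rigidity built into Bowditch's JSJ (Theorem~\ref{thm:bow_jsj}).

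First, I would unpack what $\phi'$ preserves. Fix a lift $\tilde v\in T$ with $\Stab(\tilde v)=G_v$ and, by the definition of the restriction map, choose a representative $\Phi\in\Aut(G)$ of $\phi$ whose induced automorphism of $T$ fixes $\tilde v$. Since $\phi$ acts trivially on $T/G$, this induced automorphism sends each edge $e$ of $T$ incident to $\tilde v$ to some edge $g_e\cdot e$ also incident to $\tilde v$, with $g_e\in G_v$; in particular $\Phi(G_e)=g_eG_eg_e^{-1}$. Letting $G_{e_1},\ldots,G_{e_k}$ be representatives of the $G_v$-conjugacy classes of incident edge groups, this shows that $\phi'\in\Out(G_v)$ preserves each conjugacy class $[G_{e_i}]$, and so do all its powers.

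Next, suppose for contradiction that $\phi'$ has infinite order. The group $G_v$ is hyperbolic, being a full quasi-convex subgroup of $G$ by Theorem~\ref{thm:bow_jsj}. Applying the relative form of Paulin's compactness theorem to rescaled Cayley graphs of $G_v$ twisted by iterates of a fixed representative of $\phi'$, one obtains a minimal nontrivial isometric action of $G_v$ on an $\R$-tree $Y$ in which each $G_{e_i}$ fixes a point. The Bestvina--Feighn Rips machine, combined with hyperbolicity of $G_v$ (which forces arc stabilizers of $Y$ to be virtually cyclic), then yields a nontrivial simplicial $G_v$-tree with two-ended edge stabilizers in which each $G_{e_i}$ is elliptic. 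Equivalently, $G_v$ splits nontrivially as the fundamental group of a graph of groups over two-ended subgroups, relative to $\{G_{e_i}\}$.

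Finally, I would use this relative splitting to refine the JSJ of $G$ at $v$: since each incident edge group $G_{e_i}$ is elliptic in the new $G_v$-tree, one can blow up the vertex $v$ of the JSJ tree of $G$ to produce a strictly finer graph-of-groups decomposition of $G$ in which every edge group is two-ended. This contradicts Bowditch's characterization of type~(3) vertices: such vertex groups are maximal quasi-convex subgroups that are neither two-ended nor hanging Fuchsian, and the canonicity of the JSJ construction forbids any further splitting of them over two-ended subgroups compatible with the incident edge groups. The main obstacle is the relative Paulin--Bestvina--Feighn step: one must verify that the rescaled actions do not degenerate to a trivial action (using that $\phi'$ has infinite order in $\Out(G_v)$) and that the arc stabilizers of the resulting $\R$-tree are two-ended (using hyperbolicity of $G_v$ together with the Rips classification of stable actions), after which the contradiction with Bowditch's classification is essentially formal.
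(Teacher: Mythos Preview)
The paper does not supply its own proof of this statement: Theorem~\ref{thm:hyp_rig_fin} is quoted from the literature with the attribution \cite{paulin,bestvinafeighn95} and no argument is given. So there is nothing in the paper to compare your proposal against directly.

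That said, your sketch is the standard argument behind the cited references, and it is essentially correct. A couple of small remarks. First, the contradiction at the end is more direct than you phrase it: in JSJ language, a type~(3) (rigid) vertex group is \emph{by definition} one that admits no nontrivial splitting over a two-ended subgroup relative to its incident edge groups, so step~3 already gives the contradiction without needing to talk about ``refining'' Bowditch's tree. Second, the relative version of Paulin's argument---ensuring each $G_{e_i}$ stays elliptic in the limiting $\R$-tree---does require a word of justification (one uses that the representatives $\Phi^n$ can be chosen so that the translation lengths of generators of each $G_{e_i}$ remain bounded, since $\phi'$ preserves $[G_{e_i}]$), but this is indeed routine and is handled in the sources the paper cites.
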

  
  \begin{thm} \cite[Proposition 4.1]{guirardellevitt15} \label{thm:cat_rig_fin}
   Let $G$ be $\CAT(0)$ with isolated flats, and let $G_v \leq G$ be a vertex group of type (0.a) in Theorem~\ref{thm:jsj_gl}. Suppose $\phi \in \Out^0(G)$, and let $\phi'$ be the restriction of $\phi$ to $G_v$. Then $\phi'$ has finite order. 
  \end{thm}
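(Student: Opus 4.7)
\emph{Proof plan.} The strategy is to argue by contradiction using a Bestvina--Paulin style convergence argument, exploiting the rigidity of $G_v$ as a vertex of the canonical elementary JSJ decomposition of Theorem~\ref{thm:jsj_gl}. Suppose $\phi'$ has infinite order in $\Out(G_v)$. Choose a lift $\tilde{v}$ of $v$ in the JSJ tree $T$, and for each $n$ choose a representative $\Phi^n \in \Aut(G)$ of $\phi^n \in \Out^0(G)$ whose induced action on $T$ fixes $\tilde{v}$. Restricting each $\Phi^n$ to the stabilizer $G_v$ of $\tilde{v}$ gives a sequence of automorphisms of $G_v$ whose classes in $\Out(G_v)$ are the powers of $\phi'$. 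After modifying each restriction by an inner automorphism of $G_v$ to minimize the maximal displacement of a fixed finite generating set in the $\CAT(0)$ model space $X$ for $G$, the assumption that $\phi'$ has infinite order forces these minimal displacements to be unbounded: otherwise the sequence would be finite modulo inner automorphisms, contradicting infinite order.

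Next I would extract a limiting action on an $\R$-tree. Rescale the action of $G_v$ on $X$ by the inverse of the minimal displacement and pass to a Gromov--Hausdorff limit, as in the Bestvina--Paulin construction adapted to the toral relatively hyperbolic setting (see~\cite{guirardellevitt11}). This yields a nontrivial, minimal isometric action of $G_v$ on an $\R$-tree $T_\infty$. The isolated flats hypothesis, together with the fact that $G$ is hyperbolic relative to the maximal virtually abelian subgroups of rank at least two (Theorem~\ref{thm:hruska_kleiner}), guarantees that arc stabilizers of $T_\infty$ are elementary in the sense of Definition~\ref{def:elementary}: either virtually cyclic or conjugate into some $P_i \in \cP$. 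Moreover, because the images of each incident edge group $G_e$ of $v$ under the $\Phi^n|_{G_v}$ remain conjugate to $G_e$ in $G_v$ (since $\phi$ fixes the edge of $T/G$ corresponding to $e$), these edge groups do not translate in the rescaled limit and thus fix points of $T_\infty$.

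Finally, I would invoke the Rips machine for elementary actions of relatively hyperbolic groups (as developed in the JSJ framework of Guirardel--Levitt~\cite{guirardellevitt11}) to convert the action on $T_\infty$ into a nontrivial splitting of $G_v$ over an elementary subgroup, relative to the peripheral structure consisting of its incident edge groups and its parabolic subgroups. Such a splitting can be used to refine $T$ at the vertex $v$, producing a strictly finer elementary decomposition of $G$ relative to $\cP$ in which each incident edge group remains elliptic. This directly contradicts the characterization of $G_v$ as a rigid vertex group (type (0.a)) in Theorem~\ref{thm:jsj_gl}, since such vertices are defined to admit no nontrivial elementary splittings relative to the incident edge groups.

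The main obstacle is verifying that the limit action has the required properties: elementary arc stabilizers and ellipticity of incident edge groups in the relatively hyperbolic setting. Both ingredients are supplied by the work of Guirardel--Levitt~\cite{guirardellevitt11, guirardellevitt15}, which extends the Bestvina--Paulin/Rips--Sela approach from hyperbolic groups (the setting of Theorem~\ref{thm:hyp_rig_fin}) to the relatively hyperbolic setting; the remainder of the argument is then formally parallel to the hyperbolic case.
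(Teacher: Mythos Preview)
The paper does not prove Theorem~\ref{thm:cat_rig_fin}; it is quoted directly from \cite[Proposition~4.1]{guirardellevitt15} as a black-box input to Theorem~\ref{thm_structure_normal}. Your sketch is essentially a reconstruction of the Guirardel--Levitt argument from that reference: the Bestvina--Paulin rescaling to produce a limiting $\R$-tree, control of arc stabilizers via relative hyperbolicity, ellipticity of incident edge groups because $\Out^0(G)$ acts trivially on $T/G$, and the Rips machine to extract an elementary splitting contradicting rigidity. So your approach and the source the paper relies on coincide; there is nothing to compare on the level of this paper itself.

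One small point to tighten in your write-up: the step ``otherwise the sequence would be finite modulo inner automorphisms'' needs the observation that, for a geometric action on $X$, only finitely many $G_v$-conjugacy classes of automorphisms can have displacement bounded by a fixed constant (properness plus cocompactness). This is standard but worth stating, since it is the point where the $\CAT(0)$ model space is actually used before rescaling.
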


  \subsection{Limit set}
  
  \begin{defn}
   Let a group $G$ act by isometries on a hyperbolic or $\CAT(0)$ space $X$, and let $x \in X$. The {\it limit set of $G$ defined at $x$}, denoted $\Lam_x G$, is the set of accumulation points in $\p X$ of the $G$-orbit of $x$ in the space $X$.
  \end{defn}

  The limit set does not depend on the choice of basepoint.
  
  \begin{lemma}
   If $G$ acts properly discontinuously and cocompactly by isometries on a proper hyperbolic or $\CAT(0)$ space~$X$, and $x,y \in X$, then $\Lam_xG = \Lam_yG$.
   \qed
  \end{lemma}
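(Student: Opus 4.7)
The plan is to reduce everything to the trivial observation that $G$ acts by isometries, so the orbits $Gx$ and $Gy$ lie at uniformly bounded distance from one another pointwise, and then to invoke the standard fact that points at bounded distance converge to the same boundary point in the cone topology. In particular, the properly discontinuous and cocompact hypotheses are not used; only the isometric action matters.

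More precisely, I would first observe that for every $g \in G$ one has $d(gx, gy) = d(x,y)$, so the map $g \mapsto gx$ and the map $g \mapsto gy$ differ by a uniformly bounded amount. Next I would recall that in both the $\CAT(0)$ and Gromov-hyperbolic settings, if $(p_n)$ and $(q_n)$ are sequences in $X$ with $\sup_n d(p_n,q_n) < \infty$ and $p_n \to \xi \in \p X$ in the cone topology, then $q_n \to \xi$ as well. For the $\CAT(0)$ case this is immediate from the description of a neighborhood basis of $\xi$ via cones based at some fixed point. For the hyperbolic case it follows because the Gromov product $(p_n \cdot q_n)_o$ remains bounded below by $(p_n \cdot q_n') - \text{const}$, where $q_n'$ is any other bounded-distance sequence, so $q_n$ and $p_n$ have the same limit in $\p X$.

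With these two ingredients in hand, the argument is symmetric. Suppose $\xi \in \Lam_x G$, so there exists a sequence $(g_n) \subset G$ with $g_n x \to \xi$ in $X \cup \p X$. Since $d(g_n x, g_n y) = d(x,y)$ for every $n$, the sequence $g_n y$ stays within bounded distance of $g_n x$, and hence $g_n y \to \xi$ by the previous paragraph. Thus $\xi \in \Lam_y G$, so $\Lam_x G \subseteq \Lam_y G$. Reversing the roles of $x$ and $y$ gives the reverse inclusion, so $\Lam_x G = \Lam_y G$.

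Since every step is a direct consequence of the definitions and of the standard behavior of the cone topology, there is really no hard part; the only thing to be careful about is citing (or sketching) the bounded-distance-implies-same-limit property correctly in each of the two settings (hyperbolic and $\CAT(0)$), which is the one place a careless presentation could leave a gap.
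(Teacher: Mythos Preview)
Your argument is correct, and it is exactly the standard reasoning the paper has in mind: the lemma is stated with a bare \qed and no proof, signalling that it is regarded as an immediate consequence of the fact that isometric orbits are at bounded Hausdorff distance and that bounded-distance sequences share limits in the cone topology. Your only slip is expository: the displayed Gromov-product inequality in the hyperbolic case is garbled (you want something like $(q_n \mid p_n)_o \to \infty$ since $d(p_n,q_n)$ is bounded while $d(o,p_n)\to\infty$, hence $q_n$ and $p_n$ have the same limit), but the intended argument is clear and sound.
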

  
  % \begin{thm} \cite[Theorem 1.1]{hamenstadt} \label{lemma_dense_orbits}
  % Let $X$ be a proper $\CAT(0)$ space, let $G \leq \Isom(X)$ be a subgroup whose limit set contains more than two points and so that $G$ does not fix a point on $\p X$. If $G$ contains a rank-one isometry, then every orbit of the $G$-action on $\Lam_xG$ is dense in $\Lam_xG$ for any $x \in X$. 
  %\end{thm}
  
  \begin{lemma} \cite[Proof of Lemma 5.2]{hamenstadt} \label{lemma_equiv_defs}
   Let $X$ be a proper $\CAT(0)$ space, and let $G \leq \Isom(X)$ be a subgroup whose limit set contains more than two points and so that $G$ does not fix a point on~$\p X$. Suppose $G$ contains a rank-one isometry.  Let $x \in X$. The limit set $\Lam_xG$ is the smallest non-empty closed $G$-invariant set in $\p X$. \qed
  \end{lemma}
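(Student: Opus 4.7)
The plan is to verify the two properties characterizing $\Lam_x G$ as the smallest non-empty closed $G$-invariant subset of $\p X$: first that $\Lam_x G$ itself is non-empty, closed, and $G$-invariant, and second that it is contained in any other such set. The first holds immediately: $\Lam_x G$ is closed as a set of accumulation points, $G$-invariant because $G$ acts on $\p X$ by homeomorphisms preserving the orbit $G \cdot x$, and non-empty since it contains more than two points by hypothesis.

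For the minimality direction, fix a non-empty closed $G$-invariant $C \subset \p X$ and let $g \in G$ be a rank-one element. By Theorem~\ref{lemma_rk1_NS}, $g$ acts on $\p X$ with North-South dynamics, fixing distinct attracting and repelling points $g^{+\infty}, g^{-\infty}$. The first step is to place $g^{\pm \infty}$ inside $C$. Pick any $c \in C$; if $c = g^{-\infty}$, replace $c$ by $hc \in C$ where $h \in G$ satisfies $h \cdot g^{-\infty} \neq g^{-\infty}$, which exists because $G$ does not fix $g^{-\infty}$. In either case $c \neq g^{-\infty}$, so North-South dynamics gives $g^k c \to g^{+\infty}$, and closedness of $C$ yields $g^{+\infty} \in C$; the same argument applied to $g^{-1}$ gives $g^{-\infty} \in C$.

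It then suffices to show $\Lam_x G \subset \overline{G \cdot g^{+\infty}}$, since this closure is $G$-invariant and closed and hence lies in $C$. Given $\xi \in \Lam_x G$, pick $g_n \in G$ with $g_n x \to \xi$ in the cone topology. Let $\gamma \colon [0,\infty) \to X$ be a geodesic ray from $x$ representing $g^{+\infty}$, and set $y_k = \gamma(k)$. Since $g_n$ is an isometry, $d(g_n x, g_n y_k) = d(x, y_k)$ depends only on $k$, so for each fixed $k$ the cone topology gives $g_n y_k \to \xi$ as $n \to \infty$; and for each fixed $n$, continuity of $g_n$ on $X \cup \p X$ gives $g_n y_k \to g_n \cdot g^{+\infty}$ as $k \to \infty$. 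A diagonal choice $k_n \to \infty$ then produces a sequence $g_n y_{k_n}$ that converges to $\xi$ and simultaneously approximates $g_n \cdot g^{+\infty}$ in the cone topology, forcing $g_n \cdot g^{+\infty} \to \xi$ and placing $\xi$ in $\overline{G \cdot g^{+\infty}} \subset C$.

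The main delicacy is this final diagonal step: the two cone-topology convergences must be combined carefully so that $g_n y_{k_n}$ tracks $\xi$ and $g_n \cdot g^{+\infty}$ simultaneously, for which one uses the standard description of cone-topology neighborhoods via basepoints, radii, and angles. The rest of the argument is a standard North-South ping-pong, using the hypothesis that $G$ fixes no point in $\p X$ only to avoid the degenerate case $c \in \{g^{+\infty}, g^{-\infty}\}$.
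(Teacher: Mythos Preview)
The paper does not prove this lemma; it cites Hamenst\"adt and places the end-of-proof marker directly after the statement. So the only question is whether your argument is correct on its own.

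Your proof is fine through the step placing $g^{\pm\infty}$ in an arbitrary non-empty closed $G$-invariant $C$, and the reduction to $\Lambda_x G \subset \overline{G \cdot g^{+\infty}}$ is valid. The problem is the diagonal step. From
\[
g_n y_k \xrightarrow[\,n\to\infty\,]{} \xi \quad (\text{fixed } k),
\qquad
g_n y_k \xrightarrow[\,k\to\infty\,]{} g_n \cdot g^{+\infty} \quad (\text{fixed } n),
\]
you conclude that a suitable diagonal $k_n \to \infty$ forces $g_n \cdot g^{+\infty} \to \xi$. This is false in general. Work in $\Hy^2$, take $g$ hyperbolic, and choose $h$ with $h \cdot g^{+\infty} = g^{-\infty}$ but $h \cdot g^{-\infty} \notin \{g^{\pm\infty}\}$, so that $G = \langle g, h\rangle$ fixes no point of $\p \Hy^2$ and has limit set of cardinality at least three. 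Set $g_n = g^n h g^{-n}$. One checks directly that $g_n x \to g^{+\infty} =: \xi$, while
\[
g_n \cdot g^{+\infty} \;=\; g^n h \cdot g^{+\infty} \;=\; g^n \cdot g^{-\infty} \;=\; g^{-\infty}
\]
for every $n$. Thus $g_n \cdot g^{+\infty}$ is the constant sequence $g^{-\infty} \neq \xi$, and no diagonal choice can make $g_n y_{k_n}$ simultaneously close to $\xi$ and to $g_n \cdot g^{+\infty}$, since these targets lie in disjoint neighbourhoods. Your own caveat that ``the two cone-topology convergences must be combined carefully'' is exactly where the argument breaks: no amount of care manufactures the claimed limit.

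What is missing is the extra leverage Hamenst\"adt draws from the rank-one hypothesis: a rank-one axis is \emph{contracting}, which upgrades North--South dynamics from $\p X$ to the full compactification $X \cup \p X$. With this one shows that attracting fixed points of rank-one elements of $G$ are dense in $\Lambda_x G$; since each such point lies in $C$ by your first step, $\Lambda_x G \subset C$ follows. North--South dynamics on $\p X$ alone (Theorem~\ref{lemma_rk1_NS}), together with continuity of isometries on $X \cup \p X$, is not sufficient to run the minimality argument.
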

  %\begin{proof}
  %  Let $\Omega \subset \p X$ be the smallest non-empty closed $G$-invariant subset of $\p X$. Let $g \in G$ be a rank-one isometry of $G$, and let the endpoints of the axis for $g$ be the set $\{\lambda^+, \lambda^-\}$. The set $\{g^n \cdot \left(\Omega - (\Omega \cap \{\lambda^-\}) \right) \}_{n \in \N}$ converges to  $\lambda^+$ by Theorem~\ref{lemma_rk1_NS}. Since the set $\Omega$ is $G$-invariant and closed, $\lambda^+ \in \Omega$. Thus, $\overline{G\cdot \lambda^+}\subset \Omega$ since the set $\Omega$ is $G$-invariant and closed. The element $\lambda^+ \in \Lambda_xG$ and $\overline{G\cdot \lambda^+} = \Lambda_xG$ by Theorem~\ref{lemma_dense_orbits}. Therefore, $\Lambda_xG \subset \Omega$. The limit set $\Lambda_xG$ is also closed and $G$-invariant; therefore, $\Lambda_xG = \Omega$.
  %\end{proof}

  \begin{lemma} \label{lemma_rank_one_exists} 
   Let $G$ act geometrically on a $\CAT(0)$ space $X$ with isolated flats. If $H \triangleleft G$ is infinite, then $H \leq \Isom(X)$ contains a rank-one isometry of $X$. 
  \end{lemma}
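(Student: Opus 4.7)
My approach is to argue by contradiction: suppose $H$ contains no rank-one isometry of $X$. Since $G$ is relatively hyperbolic with virtually abelian peripheral subgroups (Theorem~\ref{thm:hruska_kleiner}), the infinite subgroup $H$ contains an infinite-order element $h$. By Lemma~\ref{cor:char_isoms}, every infinite-order element of $H$ acts by translation on a flat in $X$, and hence by the Flat Torus Theorem lies in a (unique, by almost malnormality of peripherals) maximal virtually abelian subgroup of rank at least two, i.e., a peripheral subgroup from the collection $\cP$ of Theorem~\ref{thm:hruska_kleiner}. Let $P \in \cP$ be the unique peripheral subgroup containing $h$.

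The key idea is to exploit normality of $H$. For every $g \in G$ the conjugate $ghg^{-1} \in H$ is of infinite order and is contained in the peripheral subgroup $gPg^{-1}$. If some $gPg^{-1}$ differs from $P$, then $h$ and $ghg^{-1}$ are two infinite-order elements of $H$ lying in distinct peripheral subgroups; a standard ping-pong / combination argument in the relatively hyperbolic group $(G, \cP)$ produces, for all sufficiently large $N$, an element $h^N \cdot g h^N g^{-1} \in H$ that lies in no peripheral subgroup of $G$. By Lemma~\ref{cor:char_isoms} this element is a rank-one isometry of $X$, contradicting our standing assumption. In the remaining case, $gPg^{-1} = P$ for every $g \in G$, so $P$ is normal in $G$; almost malnormality of peripherals then forces every $g \in G$ to lie in $P$, so $G = P$ is virtually abelian, contradicting the requirement in Definition~\ref{def:cat0IFP} that $X$ not be quasi-isometric to any Euclidean space.

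The main technical obstacle is the ping-pong step in the first case, producing a loxodromic (and hence, by Lemma~\ref{cor:char_isoms}, rank-one) element from two parabolic elements lying in distinct peripheral subgroups. A clean implementation uses the continuous $G$-equivariant map $\pi \colon \p X \to \p(G,\cP)$ of Tran (as invoked in the proof of Lemma~\ref{cor:char_isoms}): the peripherals $P$ and $gPg^{-1}$ have distinct parabolic fixed points in $\p(G,\cP)$, and the standard north--south dynamics of large powers of parabolic elements on the Bowditch boundary produce the desired loxodromic element inside $H$.
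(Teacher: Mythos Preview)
Your overall strategy matches the paper's: find an infinite-order element $h \in H$; if it is not rank-one it lies in a peripheral subgroup $P$; use normality and the failure of $P$ to be normal in $G$ to conjugate $h$ into a distinct peripheral; then combine the two parabolics to produce a non-peripheral, hence rank-one, element of $H$. The difference is only in this last combination step. The paper uses the action of $G$ on its canonical JSJ tree (Theorem~\ref{thm:jsj_gl}): the two parabolics $h$ and $h' = ghg^{-1}$ are elliptic, and one chooses $g$ so that their fixed-vertex sets are disjoint, whence the product $hh'$ is loxodromic on the tree and therefore lies in no peripheral. You instead run ping-pong on the Bowditch boundary $\p(G,\cP)$. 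Your route has the mild advantage that it does not require $G$ to be one-ended, which the paper needs in order to invoke the JSJ tree.

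Two small points to tighten. First, the phrase ``north--south dynamics of large powers of parabolic elements'' is inaccurate: a parabolic element has a \emph{single} fixed point $p$ on $\p(G,\cP)$, with $h^{\pm n}x \to p$ for every $x \neq p$. The ping-pong still goes through---for disjoint neighborhoods $U \ni p$ and $U' \ni p'$ and large $N$, one checks $h^N(h')^N$ sends the complement of $U'$ into $U$ while its inverse sends the complement of $U$ into $U'$, so the product is loxodromic---but you should phrase this via convergence-group dynamics rather than north--south. Second, the assertion that an infinite $H$ must contain an infinite-order element does not follow from relative hyperbolicity alone; the paper invokes the Strong Tits Alternative for $\CAT(0)$ groups with isolated flats \cite[Theorem~1.2.2]{hruskakleiner}, and you should cite the same.
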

  \begin{proof}
    The subgroup $H$ contains an infinite-order element $h$. Indeed, the group $G$ satisfies the Strong Tits Alternative: every subgroup of $G$ is either finitely generated virtually abelian or contains a free subgroup of rank two \cite[Theorem 1.2.2]{hruskakleiner}. By Theorem~\ref{thm:hruska_kleiner}, the group $(G, \cP)$ forms a relatively hyperbolic group pair, where $\cP$ is a collection of virtually abelian subgroups of $G$ of rank at least two. The peripheral subgroups of $G$ are almost malnormal \cite[Section 3.3]{farb98}, so $H \not\subset P_i$ for any $P_i \in \cP$. Thus, either $H$ is two-ended or contains a free subgroup of rank two. So, $H$ contains an infinite-order element $h$.    
    
    By Lemma~\ref{cor:char_isoms}, we may assume the element $h$ acts by translation on a flat in $X$. So, $h \in P_i^{g_0}$, a conjugate of a peripheral subgroup $P_i$ of $G$. Consider $g \in G \backslash P_i^{g_0}$. Then $ghg^{-1} \in P_i^{gg_0}$, a different conjugate of a peripheral subgroup of $G$. Since $H \triangleleft G$, the element $ghg^{-1} = h' \in H$. We can ensure that the product $hh'$ does not stabilize a flat in $X$. Indeed, the elements $h$ and $h'$ act elliptically on the JSJ tree for $G$ given by Theorem~\ref{thm:jsj_gl} (since $G$ is one-ended, see Remark~\ref{remark_one_ended}), and we can choose the conjugating element $g$ to translate the fixed points of $h$ far enough that $h$ and $h'$ fix disjoint sets of vertices. So, their product acts as a loxodromic on the tree, and is thus not contained in a peripheral subgroup. By Lemma~\ref{cor:char_isoms}, the element $hh'$ has rank one. 
  \end{proof}

  \begin{lemma} \label{lemma_cat_ls_normal}
    Let $G$ act geometrically on a $\CAT(0)$ space~$X$ with isolated flats. If $H \triangleleft G$ is a normal subgroup, then $\Lam_xH = \Lam_xG$ for all $x \in X$. 
  \end{lemma}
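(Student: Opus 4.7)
The inclusion $\Lam_x H \subseteq \Lam_x G$ is immediate from $H \leq G$, so the plan is to establish the reverse inclusion by an invariance-plus-minimality argument: show that $\Lam_x H$ is a nonempty closed $G$-invariant subset of $\p X$, and then invoke Lemma~\ref{lemma_equiv_defs} to conclude that it must contain $\Lam_x G$.

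First I would verify that the hypotheses of Lemma~\ref{lemma_equiv_defs} hold for $G$ itself, so that $\Lam_x G$ is the smallest nonempty closed $G$-invariant subset of $\p X$. Since $G$ is an infinite normal subgroup of itself, Lemma~\ref{lemma_rank_one_exists} supplies a rank-one isometry in $G$. The space $X$ has the isolated flats property, so it contains a flat of dimension at least two, and since $G$ acts cocompactly, $\Lam_x G = \p X$ contains the boundary circle of that flat; in particular it has more than two points. Finally, a geometric action on a proper unbounded $\CAT(0)$ space that is not a line cannot globally fix a point of $\p X$, so Lemma~\ref{lemma_equiv_defs} applies.

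Next I would show $\Lam_x H$ is a nonempty, closed, $G$-invariant subset of $\p X$. Closedness follows from the definition as an accumulation set, and nonemptiness from $H$ being infinite (implicit in the statement, since otherwise $\Lam_x H = \emptyset$): the $H$-orbit of $x$ is an infinite set in the compact space $X \cup \p X$, hence accumulates. For $G$-invariance, take $\xi \in \Lam_x H$ with $h_n x \to \xi$ and $g \in G$. Then
\[
g(h_n x) \;=\; (gh_n g^{-1})(gx),
\]
and $h_n' := gh_n g^{-1} \in H$ by normality. Thus $h_n'(gx) \to g\xi$, so $g\xi \in \Lam_{gx}H$. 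Independence of basepoint for $\Lam_\bullet H$ is a small separate step (not covered directly by the earlier basepoint lemma, which assumed a cocompact action): since $H$ acts by isometries, $d(h_n x, h_n y) = d(x,y)$ is bounded, and in a proper hyperbolic or $\CAT(0)$ space two sequences at bounded distance have the same limit in the visual boundary, giving $\Lam_{gx}H = \Lam_x H$. Hence $g\xi \in \Lam_x H$.

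Combining the two inputs: $\Lam_x H$ is a nonempty closed $G$-invariant subset of $\p X$, and $\Lam_x G$ is the smallest such set, so $\Lam_x G \subseteq \Lam_x H$. Together with the trivial inclusion, this gives equality. The only mildly delicate point is the basepoint-independence argument for $\Lam_\bullet H$ under a possibly non-cocompact action, but it reduces to the standard fact that bounded-distance sequences share a visual limit; everything else is a straightforward assembly of the ingredients already set up in the paper.
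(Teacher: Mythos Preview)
Your proof is correct and in fact more direct than the paper's. Both arguments finish the same way---once $\Lambda_x H$ is known to be a nonempty closed $G$-invariant subset of $\partial X$, Lemma~\ref{lemma_equiv_defs} applied to $G$ gives $\Lambda_x G \subseteq \Lambda_x H$---but they reach $G$-invariance differently. You observe that $g \cdot \Lambda_x H = \Lambda_{gx} H$ by normality and then use basepoint-independence of the limit set (valid for any isometric action on a proper $\CAT(0)$ space, not just a cocompact one) to get $\Lambda_{gx} H = \Lambda_x H$. The paper instead shows that each translate $g \cdot \Lambda_x H$ is $H$-invariant, invokes the rank-one element of $H$ supplied by Lemma~\ref{lemma_rank_one_exists} together with its North--South dynamics to prove $g \cdot \Lambda_x H \cap \Lambda_x H \neq \emptyset$, and then applies Lemma~\ref{lemma_equiv_defs} to $H$ to conclude $g \cdot \Lambda_x H = \Lambda_x H$. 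Your route avoids the dynamical step entirely and only needs Lemma~\ref{lemma_equiv_defs} once (for $G$ rather than for both $H$ and $G$), at the cost of the small auxiliary observation about basepoint-independence; the paper's route avoids that observation but uses more structure (a rank-one element in $H$, North--South dynamics, and a second application of Lemma~\ref{lemma_equiv_defs}).
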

  \begin{proof}
    Suppose a group $G$ acts geometrically on a $\CAT(0)$ space~$X$ with isolated flats, and let $x \in X$. Let $H \triangleleft G$ be a normal subgroup. Since $\Stab(g\cdot\Lam_xH) = g \Stab(\Lam_xH) g^{-1} \supseteq gHg^{-1} = H$ for all $g \in G$, the set $g \cdot\Lam_xH$ is $H$-invariant for all $g \in G$. 
    
    We first show that $g \cdot \Lam_xH = \Lam_x H$ for all $g \in G$. The subgroup $H$ contains a rank-one isometry $h \in H$ by Lemma~\ref{lemma_rank_one_exists}. Let $h^{\infty}  = \lim_{n \rightarrow \infty} h^n\cdot x \in \Lambda_xH$. Then the sequence $\{h^n \cdot gh^{\infty}\}_{n \in \N}$ converges to $h^{\infty}$ by North-South dynamics, unless $g \cdot h^\infty = h^{-\infty}$. In the general case since $g \cdot \Lam_xH$ is closed and $H$-invariant we have the limit $h^\infty \in g \cdot \Lam_xH$, so in either case $g \cdot \Lam_xH \cap \Lam_x H \neq \emptyset$. By Lemma~\ref{lemma_equiv_defs}, the set $\Lambda_xH$ is the smallest closed $H$-invariant set in $\p X$. Since $g \cdot \Lam_xH \cap \Lam_x H \neq \emptyset$ and $g \cdot \Lam_xH$ is closed and $H$-invariant, $g \cdot \Lam_xH = \Lam_xH$. Therefore, the set $\Lam_xH \subset \Lam_xG$ is closed and $G$-invariant. Thus, Lemma~\ref{lemma_equiv_defs} implies $\Lam_xH = \Lam_xG$.
  \end{proof}

%%%%%%%%%%%%%
%%%%%%%%%%%%%'
 \subsection{Cannon--Thurston map} \label{sec:CT_map}
 %%%%%%%%%%%%%
%%%%%%%%%%%%%'
  Morally, the Cannon--Thurston map for $H \hookrightarrow G$ should be an extension of the subgroup inclusion to the boundary, as it is in the hyperbolic case.
  To define the boundary of a $\CAT(0)$ group $G$ we however need a $\CAT(0)$ model space $X$ on which it acts.
  Rather than develop a theory of the compactification $G \cup \p X$, which would enable a definition of Cannon--Thurston maps simply `extending the group inclusion to the boundary', we exploit the convenience afforded by the identification of the limit set $\Lam_x G$ with the boundary $\p X$ and make the following definition.
  \begin{defn}[Cannon--Thurston map] \label{def:CT_map}
      Let $H \leq G$ such that $H$ acts geometrically on $Y$ and $G$ acts geometrically on $X$, where $X$ and $Y$ are hyperbolic or $\CAT(0)$ spaces. Let $x \in X$ and $y \in Y$. A \emph{Cannon--Thurston map} for the pair $(H \curvearrowright Y, G \curvearrowright X)$ is the map $f \colon \p Y \to \p X$ defined by \[f\left(\lim_{i \rightarrow \infty} h_i \cdot y\right)=\lim_{i \rightarrow \infty} h_i \cdot x,\] provided $f$ is well-defined and continuous. Being well-defined means that $f(\xi)$ does not depend on the choice of the sequence of group elements $h_i \in H$ such that $\underset{i \rightarrow \infty}{\lim} h_i \cdot y = \xi$.
    \end{defn}
      
  \begin{remark}
   If $H \leq G$ are hyperbolic or $\CAT(0)$ with isolated flats, then the choice of model spaces $X$ and $Y$ does not affect the existence of the Cannon--Thurston map for the pair $H \leq G$. This follows from the fact that the visual boundary is a quasi-isometry invariant for hyperbolic groups \cite{ gromov} and a group invariant for the $\CAT(0)$ with isolated flats case \cite[Corollary 4.1.9]{hruskakleiner}. Thus, we may define a {\it Cannon--Thurston map} for the pair $(H,G)$. Note that the example of Croke--Kleiner \cite{crokekleiner} shows that without the isolated flats assumption, $\CAT(0)$ groups do not in general have a well-defined boundary. This is critical, since for such an example, with $G$ acting on $X$ and $Y$ with $\p X \not \cong \p Y$, there cannot exist Cannon--Thurston maps for both of the pairs $(G \curvearrowright X, G \curvearrowright Y)$ and $(G \curvearrowright Y, G \curvearrowright X)$ because the compositions would have to be the identity maps on $\p X$ and $\p Y$ implying $\p X \cong \p Y$.
  \end{remark}
      
  \begin{remark} \label{rem:H_equivariant}
   By definition, the Cannon--Thurston map is $H$-equivariant.
  \end{remark}
    
  If $H$ and $G$ are hyperbolic groups and $H$ is a quasi-convex subgroup of $G$, then the Cannon--Thurston map exists for the pair $(H,G)$ by \cite{ghysdelaharpe}. Hruska--Kleiner prove an analogous statement in the setting of $\CAT(0)$ groups with isolated flats~\cite[Theorem 4.1.8]{hruskakleiner}.

%%%%%%%%%%%%%
%%%%%%%%%%%%%
  \section{The structure of normal subgroups} \label{sec:subgroup_structure}
%%%%%%%%%%%%%
%%%%%%%%%%%%%
       
  \begin{notation} \label{nota_out}
   Let $1 \rightarrow H \rightarrow G \xrightarrow{p} Q \rightarrow 1$ be a short exact sequence of groups. This sequence induces a homomorphism $\phi: Q \rightarrow \Out(H)$. If $q \in Q$, let $\phi_q = \phi(q)$. If $A \leq G$ is a subgroup, let $[A]$ denote the conjugacy class of $A$ in $G$. 
  \end{notation}
  
  \begin{lemma} \label{lemma:inf_ord_elt}
   Let $G$ be a $\CAT(0)$ group with isolated flats, and suppose $H \triangleleft G$ is a normal subgroup of infinite-index such that $H$ is either hyperbolic or $\CAT(0)$ with isolated flats. Then the following hold.
   \begin{enumerate}
    \item Let $q \in Q$ be an infinite order element and $H' \leq H$. If $\phi_q \in \Out(H)$ restricts to an outer automorphism $\phi_q' \in \Out(H')$, then the group $G$ contains a subgroup isomorphic to the semi-direct product $H' \rtimes_{\phi_q'} \Z$.
    \item The group $Q = G/H$ contains an element of infinite-order.
    \item If $q \in Q$ has infinite order, then the element $\phi_q \in \Out(H)$ has infinite order. 
   \end{enumerate}
  \end{lemma}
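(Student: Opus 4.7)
The plan is to handle the three parts with a common technical backbone, namely that $H$ is not virtually abelian. Indeed, for $H$ hyperbolic, Lemma~\ref{lemma_cat_ls_normal} forces $\Lam_x H = \Lam_x G = \p X$, which has more than two points because $X$ contains a flat of dimension at least two, ruling out $H$ being virtually $\Z$; for $H$ being $\CAT(0)$ with isolated flats, this is built into Definition~\ref{def:cat0IFP}. In particular the center $Z(H)$ is finite in both cases (using also that finite subgroups of a $\CAT(0)$ group have bounded order). This observation, together with Lemma~\ref{lemma:prod_cat0}, is the key tool for Parts~(2) and~(3).

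For Part~(1), the plan is a direct construction. I would begin with a representative $\Phi \in \Aut(H)$ of $\phi_q$ such that $\Phi(H') = H'$ and $\Phi|_{H'}$ represents $\phi_q'$ (which exists by hypothesis). Pick any preimage $g \in G$ with $p(g) = q$; conjugation by $g$ gives a representative of $\phi_q$ that differs from $\Phi$ by an inner automorphism of $H$ by some $h \in H$. Set $\tilde q := hg$, so that conjugation by $\tilde q$ on $H$ is exactly $\Phi$ and in particular preserves $H'$. Since $p(\tilde q) = q$ has infinite order, no nontrivial power of $\tilde q$ lies in $H$, and hence $\la H', \tilde q\ra = H' \rtimes \la \tilde q \ra \cong H' \rtimes_{\phi_q'} \Z$.

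For Part~(3), the plan is to argue by contradiction. Assume $\phi_q^n$ is inner for some $n \geq 1$, realized by conjugation by $h \in H$. Pick a lift $\tilde q \in G$ of $q$, which has infinite order, and set $c := h^{-1}\tilde q^n \in G$. Then $c$ centralizes $H$, and since $p(c) = q^n \neq 1$ the element $c$ has infinite order. Because $\la c \ra \cap H \subseteq C_G(H) \cap H = Z(H)$ is a finite subgroup of the torsion-free group $\la c \ra \cong \Z$, we get $\la c \ra \cap H = 1$. Hence $H \times \la c \ra \leq G$ with both factors infinite, which by Lemma~\ref{lemma:prod_cat0} forces $H$ to be virtually abelian, contradicting the preamble.

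Part~(2) is the subtlest step; the plan is to analyze the short exact sequence
\[
 1 \to C_G(H)/Z(H) \to Q \xrightarrow{\phi} \phi(Q) \to 1,
\]
where $\phi \colon Q \to \Out(H)$ is the outer action. If $\phi(Q) \leq \Out(H)$ contains an element of infinite order, then pulling it back to $Q$ produces the desired element, because any $q$ of finite order $n$ would satisfy $\phi_q^n = \phi_{q^n} = \mathrm{id}$. Otherwise $\phi(Q)$ is torsion, and the main obstacle is to deduce that $\phi(Q)$ is in fact finite. I would invoke virtual torsion-freeness of $\Out(H)$, which for both $H$ hyperbolic and $H$ being $\CAT(0)$ with isolated flats follows from the JSJ analysis in Section~\ref{sec:grushko_jsj}: restrictions to rigid vertex groups have finite order (Theorems~\ref{thm:hyp_rig_fin} and~\ref{thm:cat_rig_fin}), while $\Out$ of the remaining vertex group types is virtually torsion-free (mapping class groups of hanging Fuchsian pieces by Selberg's lemma; virtually abelian contributions for peripheral or two-ended vertices; Dehn twists along edges form a torsion-free abelian group). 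Granted this, $\phi(Q)$ is finite, so $C_G(H)/Z(H)$ has finite index in the infinite group $Q$; in particular $C_G(H)$ is infinite. By the strong Tits alternative from Theorem~\ref{thm:hruska_kleiner}, $C_G(H)$ contains an infinite-order element $c$, and the calculation from Part~(3) shows $\la c \ra \cap H = 1$, so $c$ has infinite-order image in $Q$.
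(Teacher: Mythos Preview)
Your Parts~(1) and~(3) are essentially the paper's arguments: build the semi-direct product from a lift of $q$ that normalizes $H'$, and for~(3) observe that finite-order $\phi_q$ forces a subgroup of $G$ virtually of the form $H \times \Z$, which Lemma~\ref{lemma:prod_cat0} rules out since $H$ is not virtually abelian. (In~(3) your detour through $Z(H)$ is unnecessary: $p(c) = q^n$ already has infinite order in $Q$, so $\la c\ra \cap H = 1$ holds directly.)

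Part~(2) is where you diverge, and there is a genuine gap. The paper argues directly from the geometry of $G$: pick $\la a,b\ra \cong \Z^2 \leq G$ coming from a flat. If $H$ is hyperbolic, $p(a)$ and $p(b)$ cannot both have finite order, else $H$ would contain a $\Z^2$. If $H$ is $\CAT(0)$ with isolated flats, a short covering-space argument with a maximal-rank torus in $X/G$ shows some conjugate of the abelian subgroup meets $H$ in infinite index, again producing an infinite-order element of $Q$. This uses nothing about $\Out(H)$.

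Your route through $1 \to C_G(H)/Z(H) \to Q \to \phi(Q) \to 1$ is sound in outline, but its crux---that a torsion image $\phi(Q)\leq\Out(H)$ is necessarily finite---rests on virtual torsion-freeness of $\Out(H)$. That is a substantial structural theorem, and it does \emph{not} follow from the material in Section~\ref{sec:grushko_jsj}. Theorems~\ref{thm:hyp_rig_fin} and~\ref{thm:cat_rig_fin} only say that the \emph{restriction} of an outer automorphism to a rigid vertex group has finite order; assembling a torsion-free finite-index subgroup of $\Out(H)$ from this requires the full Dehn-twist/Fouxe--Rabinovitch description of $\Out$ relative to the JSJ tree, together with a separate treatment of the Stallings--Dunwoody level when $H$ is infinite-ended. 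None of that is in the paper, and your one-sentence sketch does not supply it. So as written, Part~(2) has an unjustified step, whereas the paper's argument is elementary and self-contained.
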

  \begin{proof}
    We first prove (1). By assumption there exists $g \in G$ with $p(g) = q$ such that $H'^g = H'$ inducing $\phi_q' \in \Out(H')$. An elementary argument proves the subgroup $\la H', g \ra \leq G$ is isomorphic to the group $H' \rtimes_{\phi_q'} \Z$.  
    
    To prove (2), suppose first that $H$ is hyperbolic. By \cite[Lemma 3.1.2]{hruskakleiner}, for every flat $F \subset X$, there exist elements $a,b \in \Stab_G(F)$ with $\la a,b \ra \cong \Z^2$. Since $H$ is hyperbolic, the images $p(a)$ and $p(b)$ in the quotient $p:G \rightarrow Q$ cannot both be finite; otherwise, $H$ would contain an abelian subgroup of rank two. 
    
    Suppose now that $H$ is $\CAT(0)$ with isolated flats. Let $G' \leq G$ be a maximal-rank free abelian subgroup of $G$; the rank $n$ of $G'$ is at least two. Suppose $G$ acts geometrically on a $\CAT(0)$ space $X$. The quotient space $X/G$ contains an $n$-torus $T$ with fundamental group $G'$. The infinite-index subgroup $H \leq G$ yields an infinite cover $X/H$ of $X/G$. Consider the full pre-image of $T$ in $X/H$. This pre-image need not be connected and will not be compact. The pre-image of the torus could fail to be compact if either the pre-image has infinitely many compact components, or if the pre-image  contains a component that is not compact. However, the pre-image cannot contain infinitely many compact $n$-torus components since $H$ has finitely many conjugacy classes of rank-$n$ free abelian subgroups by \cite[Lemma 3.1.2]{hruskakleiner}. 
  
    To prove (3), suppose $q \in Q$ is an element of infinite order. There exists $g \in G$ with $p(g) = q$. The subgroup $\la H, g \ra \leq G$ is isomorphic to the group $H \rtimes_{\phi_q} \Z$ by (1). If $\phi_q$ has finite order, then this semi-direct product is virtually a product $H \times \Z$. Since $G$ is $\CAT(0)$ with isolated flats, Lemma~\ref{lemma:prod_cat0} forces $H$ to be virtually abelian, and since $H$ is either hyperbolic or $\CAT(0)$ with isolated flats, it must then in fact be virtually cyclic. However, by Lemma~\ref{lemma_cat_ls_normal}, the group $H$ is not two ended. Thus, the element $\phi_q$ has infinite order. 
  \end{proof} 
  
  The next theorem characterizes the isomorphism types of normal subgroups that are either hyperbolic or $\CAT(0)$ with isolated flats in a $\CAT(0)$ group with isolated flats. 
  
  \begin{thm} \label{thm_structure_normal}
   Let $G$ be a $\CAT(0)$ group with isolated flats, and let $H \triangleleft G$ be an infinite-index normal subgroup that is either hyperbolic or $\CAT(0)$ with isolated flats. Let $\cH$ be a Stallings--Dunwoody decomposition of $H$. If $H_v \leq H$ is a one-ended vertex group of $\cH$, then $H_v$ is either 
   \begin{enumerate}
    \item virtually a closed surface group, or 
    \item virtually abelian, or 
    \item hyperbolic and the JSJ decomposition of $H_v$ over $2$-ended subgroups given by Theorem~\ref{thm:bow_jsj} does not contain vertex groups of type~(3), or 
    \item  a $\CAT(0)$ group with isolated flats whose JSJ decomposition over elementary subgroups given by Theorem~\ref{thm:jsj_gl} does not contain any vertex groups of type~(0.a).
   \end{enumerate}
  \end{thm}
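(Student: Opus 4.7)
The plan is to produce an infinite-order outer automorphism of the vertex group $H_v$ arising from the quotient $Q = G/H$, and then to exploit the rigidity theorems~\ref{thm:hyp_rig_fin} and~\ref{thm:cat_rig_fin} together with Lemma~\ref{lemma:prod_cat0} to force the JSJ decomposition of $H_v$ to avoid rigid vertex groups. The contradiction one aims for is as follows: if the action of $Q$ on a rigid vertex group $H_{v,w}$ were trivial in $\Out$ after passing to a power, then Lemma~\ref{lemma:inf_ord_elt}(1) would embed $H_{v,w} \times \Z$ into $G$, forcing $H_{v,w}$ to be virtually abelian by Lemma~\ref{lemma:prod_cat0}, contradicting rigidity.

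Concretely, I would first apply Lemma~\ref{lemma:inf_ord_elt}(2,3) to pick $q \in Q$ of infinite order with $\phi_q \in \Out(H)$ of infinite order, and then Lemma~\ref{lemma_one_end} to find $m$ with $\phi_q^m([H_v]) = [H_v]$, producing a restriction $\psi \in \Out(H_v)$. Cases (1) and (2) of the theorem cover $H_v$ virtually a closed surface group or virtually abelian, so I may assume $H_v$ is neither. In the remaining cases $H_v$ admits a JSJ decomposition of the appropriate type: when $H$ is hyperbolic, $H_v$ is hyperbolic and one-ended; not being virtually a surface group it is non-Fuchsian, so Theorem~\ref{thm:bow_jsj} applies; when $H$ is $\CAT(0)$ with isolated flats, one argues that $H_v$ is again $\CAT(0)$ with isolated flats (since it is one-ended and not virtually abelian), so Theorem~\ref{thm:jsj_gl} applies. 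Suppose for contradiction that the JSJ of $H_v$ contained a rigid vertex group $H_{v,w}$. Since $\Out^0(H_v)$ has finite index in $\Out(H_v)$, some power $\psi^k$ lies in $\Out^0(H_v)$, and in particular preserves $[H_{v,w}]$. Its restriction to $H_{v,w}$ has finite order by Theorem~\ref{thm:hyp_rig_fin} or~\ref{thm:cat_rig_fin}, so a further power $\psi^{kN}$ restricts to the identity in $\Out(H_{v,w})$. Applying Lemma~\ref{lemma:inf_ord_elt}(1) to the infinite-order element $q^{mkN} \in Q$ and the subgroup $H_{v,w} \leq H$, the group $G$ contains a copy of $H_{v,w} \rtimes_\iota \Z$ where $\iota \in \Aut(H_{v,w})$ is inner, say conjugation by some $h_0 \in H_{v,w}$. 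Writing $t$ for the $\Z$-generator, the element $h_0^{-1} t$ has infinite order, meets $H_{v,w}$ trivially, and centralizes $H_{v,w}$, so $G$ contains $H_{v,w} \times \Z$. Lemma~\ref{lemma:prod_cat0} then forces the infinite group $H_{v,w}$ to be virtually abelian, which contradicts the mutual exclusivity of JSJ vertex types in Theorems~\ref{thm:bow_jsj} and~\ref{thm:jsj_gl}: rigid vertex groups are disjoint from the $2$-ended, loxodromic, and parabolic vertex types that absorb all virtually abelian subgroups of $H_v$.

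The main obstacle is the intermediate claim that a one-ended vertex group $H_v$ of a Stallings--Dunwoody splitting of a $\CAT(0)$-with-isolated-flats group $H$ inherits the $\CAT(0)$-with-isolated-flats structure (or is virtually abelian), so that Theorem~\ref{thm:jsj_gl} can be applied; this requires a careful use of how relative hyperbolicity passes to vertex groups of splittings over finite edge groups. A secondary point to verify is that rigid vertex groups are never virtually abelian, which follows from unpacking the type lists in Theorems~\ref{thm:bow_jsj} and~\ref{thm:jsj_gl}, since virtually abelian subgroups live in the $2$-ended, loxodromic, or parabolic vertex types and these are distinct from the rigid type.
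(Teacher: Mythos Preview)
Your proposal is correct and follows essentially the same route as the paper's proof: pick an infinite-order $q \in Q$ via Lemma~\ref{lemma:inf_ord_elt}, restrict $\phi_q$ first to $H_v$ via Lemma~\ref{lemma_one_end} and then to a putative rigid vertex group, invoke the rigidity Theorems~\ref{thm:hyp_rig_fin}/\ref{thm:cat_rig_fin} to force the restriction to have finite order, and then derive a contradiction with Lemma~\ref{lemma:prod_cat0} via the embedded semidirect product from Lemma~\ref{lemma:inf_ord_elt}(1). Your contrapositive phrasing (``finite order $\Rightarrow$ virtual product $\Rightarrow$ virtually abelian $\Rightarrow$ contradiction'') is exactly the paper's argument read in the other direction.

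On the obstacle you flag: the paper resolves it by quoting \cite[Theorem~1.3]{hruskaruane} to get that $H_v$ is $\CAT(0)$, and then \cite[Theorem~8.29]{drutusapir} to transfer the relative hyperbolicity of $H$ to $H_v$ (relative to intersections with conjugates of peripherals), so that $H_v$ is hyperbolic relative to virtually abelian subgroups and hence, by Theorem~\ref{thm:hruska_kleiner}, $\CAT(0)$ with isolated flats. One refinement you should incorporate is that when $H$ is $\CAT(0)$ with isolated flats, $H_v$ need not itself be $\CAT(0)$ with isolated flats: if $H_v$ contains no $\Z^2$ it is actually hyperbolic, and then case~(3) rather than case~(4) applies. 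The paper handles this dichotomy explicitly. Your secondary point (rigid vertex groups are never virtually abelian) is indeed immediate from the mutual exclusivity of vertex types in Theorems~\ref{thm:bow_jsj} and~\ref{thm:jsj_gl}, and the paper uses it in the same way.
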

  \begin{proof}
   Suppose $H_v$ is neither virtually a closed surface group, nor virtually abelian. We first show that $H_v$ is either hyperbolic or $\CAT(0)$ with isolated flats. Since $H_v$ is a vertex group of a splitting of $H$ over finite groups, a Cayley graph of $H_v$ quasi-isometrically embeds in a Cayley graph of $H$. Therefore, if $H$ is hyperbolic, then $H_v$ is also hyperbolic. Suppose now that $H$ is $\CAT(0)$ with isolated flats. By \cite[Theorem 1.3]{hruskaruane}, the subgroup $H_v$ is a $\CAT(0)$ group. By Theorem~\ref{thm:hruska_kleiner}, the group $H$ is hyperbolic relative to the collection of its maximal virtually abelian subgroups. Thus, the coned-off Cayley graph of $H$ with respect to these subgroups is hyperbolic \cite{farb98}. By \cite[Theorem 8.29]{drutusapir},  $H_v$ is hyperbolic relative to subgroups of the form $H_v \cap gP_ig^{-1}$, where $g \in G$ and $P_i$ is a peripheral subgroup of $H$. If $H_v$ does not contain a $\Z^2$ subgroup, then we claim that $H_v$ is hyperbolic. Indeed, the subgroup $H_v$ does not intersect any $\Z^2$ subgroup non-trivially. Thus, no vertex of the quasi-isometrically embedded Cayley graph of $H_v$ gets coned-off. Hence, the subgroup $H_v$ is hyperbolic. Otherwise, $H_v$ is hyperbolic relative to virtually abelian subgroups, and hence by Theorem~\ref{thm:hruska_kleiner}, the subgroup $H_v$ is $\CAT(0)$ with isolated flats.  
   
   By Lemma~\ref{lemma:inf_ord_elt}, there exists an element $q \in Q = G/H$ of infinite order which induces an infinite-order outer automorphism $\phi_q \in \Out(H)$. Also, by Lemma~\ref{lemma_one_end}, there exists an $m \in \N^+$ such that $\phi_q^m([H_v]) = [H_v]$ for the one-ended vertex group $H_v$. 
   
   Suppose the subgroup $H_v$ is hyperbolic. By Theorem~\ref{thm:bow_jsj}, there is a canonical JSJ decomposition for $H_v$. Suppose towards a contradiction that the JSJ decomposition for $H_v$ contains a vertex group $H_v^i$ of type~(3). We prove there exists $n \in \N^+$ such that $\phi_q^n$ restricts to an infinite-order element of $\Out(H_v^i)$ to produce a contradiction to Theorem~\ref{thm:hyp_rig_fin}. As explained in Section~\ref{sec:grushko_jsj}, there exists $\ell \in \N^+$ so that the restriction $\phi_q^{m\ell}|_{H_v} \in \Out(H_v)$ preserves the vertex groups in the JSJ decomposition of $H_v$, yielding a further restriction to the rigid vertex group $H_v^i$ of $H_v$, an element $\psi = \phi_q^{m\ell}|_{H_v^i} \in \Out(H_v^i)$. Thus $H_v^i \rtimes_{\psi} \Z$ is a subgroup of $G$. Since $H_v^i$ is not isomorphic to $\Z$, this outer automorphism is infinite by Lemma~\ref{lemma:inf_ord_elt}(1) and Lemma~\ref{lemma:prod_cat0}, contradicting Theorem~\ref{thm:hyp_rig_fin}. 
   
   If the subgroup $H_v$ is $\CAT(0)$ with isolated flats, then analogous arguments, using Theorem~\ref{thm:jsj_gl} and  Theorem~\ref{thm:cat_rig_fin}, prove $H_v$ has no vertex groups of type~(0.a). 
  \end{proof}
   
  \begin{example} \label{example:normal_sub}
    If $Y_f$ is the mapping torus of a homeomorphism $f: Y \rightarrow Y$ of a space $Y$, then $\pi_1(Y) \triangleleft \pi_1(Y_f)$ is an infinite-index normal subgroup. The following list provides examples of normal subgroups of various types as in Theorem~\ref{thm_structure_normal}.
    \begin{enumerate}
     \item[(a)] Let $\Sigma$ be a connected surface with non-empty boundary and negative Euler characteristic. Let $\Psi:\Sigma \rightarrow \Sigma$ be a pseudo-Anosov homeomorphism of $\Sigma$ that fixes the boundary components of $\Sigma$ pointwise, and let $Y_{\Psi}$ denote the corresponding mapping torus. By work of Thurston~\cite{thurston,otal}, $\pi_1(Y_{\Psi})$ acts by isometries on hyperbolic $3$-space with quotient a finite-volume hyperbolic $3$-manifold $M$ with torus cusps. Let $M_\Psi$ denote the manifold $M$ truncated at its cusps so $\pi_1(M_\Psi) \cong \pi_1(Y_{\Psi})$. By \cite[Theorem II.11.27]{bridsonhaefliger} and \cite[Proposition~9.1]{hruska}, the universal cover $\widetilde{M_\Psi}$ is $\CAT(0)$ with isolated flats. Ruane \cite{ruane} proved the $\CAT(0)$ boundary of $\widetilde{M_\Psi}$ is homeomorphic to the Sierpinski carpet.     
     \item[(b)] Suppose $\Sigma_1, \ldots, \Sigma_k$ and $\Psi_1, \ldots, \Psi_k$ are surfaces and pseudo-Anosov maps as in (a) for $k \geq 2$. Let $Y$ be the union of the surfaces $\Sigma_i$ glued along their boundary components by homeomorphisms in some configuration so that each boundary component is glued to at least one other, and let $\Psi:Y \rightarrow Y$ be a homeomorphism so that $\Psi|_{\Sigma_i} = \Psi_i$. Then the mapping torus $Y_{\Psi}$ is the union of truncated $3$-manifolds $M_{\Psi_1}, \ldots, M_{\Psi_k}$ glued together along their boundary tori. As above, the universal cover of $Y_{\Psi}$ is $\CAT(0)$ with isolated flats. Depending on $k$ and the gluing configuration, the space $Y$ is either a closed surface of negative Euler characteristic or a one-ended hyperbolic group whose JSJ decomposition does not contain any rigid vertex groups. 
     \item[(c)] Let $\Sigma$ be a closed connected surface of negative Euler characteristic, and let $f_\Sigma:\Sigma \rightarrow \Sigma$ be a pseudo-Anosov homeomorphism that fixes a point $x \in \Sigma$. Let $T$ denote the $k$-torus, and let $f_T:T \rightarrow T$ be the identity map. Let $Y$ be the wedge of $\Sigma$ and $T$, where $x \in \Sigma$ is identified to a point on $T$, and let $f: Y \rightarrow Y$ be a homeomorphism so that $f|_{\Sigma} = f_{\Sigma}$ and $f|_T = f_T$. Then the universal cover of the mapping torus is $\CAT(0)$ with isolated flats, and $\pi_1(Y) = \pi_1(\Sigma) * \Z^k$ is a normal $\CAT(0)$ subgroup with isolated flats. See Figure~\ref{figure:noCT}. 
    \end{enumerate}
  \end{example}
  
  \begin{example}
   The following example does not arise from a mapping torus of a homeomorphism of a $2$-complex. Let $H = \cA * \cB * \pi_1(\Sigma)$, where $\cA\cong \Z^k$ and $\cB \cong \Z^\ell$ with $k, \ell \geq 2$, and $\Sigma$ is a closed hyperbolic surface. The group $H$ is $\CAT(0)$ with isolated flats. Let $w \in \pi_1(\Sigma)$ be a non-trivial element and $\Phi$ an automorphism of $\pi_1(\Sigma)$ induced by a pseudo-Anosov homeomorphism of $\Sigma$. Let 
   \[ G= \Biggl\langle 
	\begin{array}{c|cc}
	    &	 tat^{-1} = a & \forall a \in \cA	\\
	  \cA, \cB, \pi_1(\Sigma), t  &	tbt^{-1}=w^{-1}bw 		&\forall b \in \cB	\\
	    &	tct^{-1} = \Phi(c) & \forall c \in \pi_1(\Sigma) 	\\
	\end{array}
      \Biggr\rangle. \]
  Since $G  = \la \cA, t \ra *_{\la t \ra} \Big( \la \pi_1(\Sigma), wt \ra *_{\la wt \ra} \la \cB, wt \ra  \Big)$, the group $G$ is $\CAT(0)$ by \cite[Proposition II.11.17]{bridsonhaefliger}. The group $G$ is the fundamental group of a space obtained by gluing a $(k+1)$-torus $T_k$ and an $(\ell+1)$-torus $T_{\ell}$ to a closed hyperbolic $3$-manifold $M_{\Phi}$ so that essential simple closed curves on $T_k$ and $T_{\ell}$ are glued to disjoint geodesic essential simple closed curves in $M_{\Phi}$. Thus, the group $G$ is $\CAT(0)$ with isolated flats. 
  \end{example}

%%%%%%%%%%%%%%%%%%%%
%%%%%%%%%%%%%%%%%%%%
  \section{The outer automorphism group of a hyperbolic group}
%%%%%%%%%%%%%%%%%%
%%%%%%%%%%%%%%%%%
  
   \begin{defn}
    Let $\G$ be a group. An outer automorphism $\phi \in \Out(\G)$ is {\it atoroidal} if no power of $\phi$ preserves the conjugacy class of any infinite-order element in $\G$. A subgroup $K \leq \Out(\G)$ is {\it purely atoroidal} if every non-trivial element of $K$ is atoroidal.
  \end{defn}
 
  \begin{prop}\label{prop:pur_ator_Fn}
  There does not exist a purely atoroidal subgroup of $\Out(F_r)$ isomorphic to $\Z^2$. 
  \end{prop}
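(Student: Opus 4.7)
Suppose for contradiction that there exist $\phi,\psi \in \Out(F_r)$ generating a purely atoroidal subgroup $A \cong \Z^2$. My plan is to combine Brinkmann's hyperbolization of atoroidal mapping tori, Feighn--Handel's structure theory of abelian subgroups of $\Out(F_r)$, and a centralizer argument to arrive at a contradiction. Throughout we may freely pass to finite-index subgroups of $A$, since this does not affect purely atoroidality, and we can replace $\phi,\psi$ by such powers as needed.

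The first step is to record that, by Brinkmann's theorem, for every nontrivial $\theta \in A$ the mapping torus $F_r \rtimes_\theta \Z$ is word-hyperbolic. In particular, viewing $A$ as a subgroup of $\Out(F_r)$, no nontrivial element preserves the conjugacy class of any nontrivial $w \in F_r$, nor any nontrivial proper free factor up to conjugacy modulo a power (since that would produce, after restriction, a fixed conjugacy class and violate atoroidality on the factor).

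Next, I would invoke the Feighn--Handel structure theorem for abelian subgroups of $\Out(F_r)$ to produce, after passing to finite index, a common relative train track representative for $A$ (with a common filtration by invariant free factor systems and common exponentially-growing strata). The purely atoroidal hypothesis forces strong constraints on this common structure: there can be no non-exponentially-growing (polynomial) stratum, because a fixed or linearly growing edge orbit in a common relative train track produces a periodic conjugacy class for some nontrivial element of $A$, contradicting atoroidality. Combining this with the assumption that $r$ is finite and the elements of $A$ share a common invariant filtration, I would reduce to the case that some nontrivial $\theta \in A$ restricts to a fully irreducible and atoroidal (``hyperbolic iwip'') automorphism on the top stratum or on a suitable invariant factor.

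The main obstacle, and the heart of the argument, is then a centralizer dimension bound. For an atoroidal fully irreducible $\theta \in \Out(F_r)$ one knows (by results of Bestvina--Feighn--Handel via attracting/repelling laminations, or equivalently by Kapovich--Lustig via the associated $\R$-trees) that the centralizer of $\theta$ in $\Out(F_r)$ is virtually cyclic, generated virtually by $\theta$ itself. Applying this to the fully irreducible element produced above, the other generator of $A$ must lie, up to finite-order corrections, in the cyclic group generated by $\theta$. Hence $A$ is virtually cyclic, contradicting $A \cong \Z^2$. The delicate point will be ensuring that the reduction to a fully irreducible restriction is valid equivariantly for the whole of $A$, i.e.\ that the ``smaller'' free factor is $A$-invariant (up to finite index) so that the centralizer result applies inside the subgroup of $\Out$ of the factor; the Feighn--Handel common filtration is precisely the tool designed to make this possible.
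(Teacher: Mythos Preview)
Your strategy diverges from the paper's and, as written, has a real gap at the reduction step.

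First, the Feighn--Handel structure theorem does not furnish a \emph{common} relative train track representative for the whole abelian subgroup $A$; what \cite[Theorem~7.2]{fh09} actually gives is a single element $\phi\in A$ whose \emph{disintegration subgroup} $D(\phi)$ (built from a completely split train track for $\phi$ alone) contains $A$ up to finite index. The paper exploits precisely this: the rank of $D(\phi)$ bounds below the number of components of the associated $B$-graph, so having $D(\phi)\cap A\cong\Z^2$ forces at least two components. One then applies the Perron--Frobenius stretch-factor homomorphism $PF_\Lambda\colon D(\phi)\cap A\to\Z$ attached to a lamination carried by one component; any nontrivial element of its kernel acts as the identity on an invariant subgraph of positive rank, hence fixes a conjugacy class. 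This is a direct, quantitative way to extract a non-atoroidal element from rank two.

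Your plan, by contrast, tries to produce a restriction of some $\theta\in A$ that is fully irreducible on an $A$-invariant factor and then invoke virtual cyclicity of the centralizer of an iwip. The problem is the step ``reduce to the case that some nontrivial $\theta\in A$ restricts to a fully irreducible and atoroidal automorphism on the top stratum or on a suitable invariant factor'': nothing in the atoroidal hypothesis or in Feighn--Handel forces any restriction to be fully irreducible, and elements of $D(\phi)$ typically have several EG strata. Even granting an $A$-invariant free factor system, you would still need the induced map $A\to\Out$ of the factor to have $\Z^2$ image before the centralizer bound bites, and you have not argued this. (Your claim that atoroidality rules out all NEG strata is also slightly off: the paper's analysis allows \emph{fixed} NEG edges---what is excluded are closed indivisible Nielsen paths, quasi-exceptional paths, and non-fixed NEG edges.) The paper's stretch-factor kernel argument sidesteps all of this: it never needs full irreducibility or a centralizer computation, only that rank two forces a nontrivial kernel for a $\Z$-valued homomorphism.
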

  
  \begin{remark} Feighn--Handel~\cite{fh09} classify abelian subgroups of $\Out(F_r)$ up to finite index, and our proof of Proposition~\ref{prop:pur_ator_Fn} below relies on their analysis. Given an element $\phi \in \Out(F_r)$, Feighn--Handel define an abelian subgroup $D(\phi)$ called the {\it disintegration subgroup} of $\phi$; an example is given in this remark. They prove that for every abelian subgroup $A \leq \Out(F_r)$ there exists $\phi \in A$ so that $D(\phi) \cap A$ has finite index in $A$. The techniques in the following proof are not used elsewhere in this paper, so we refer the reader to the work of Feighn--Handel~\cite{fh09} for additional background, motivation, and definitions. 
   
  The disintegration subgroup of an outer automorphism of the free group is motivated by the disintegration of a mapping class group element. If $\psi$ is an element of the mapping class group of a surface $\Sigma$, then by Thurston's classification theorem there exists a corresponding decomposition of the surface $\Sigma$ into a disjoint union of subsurfaces $\Sigma_i$, which either are annuli or have negative Euler characteristic. There exists a homeomorphism $h \colon \Sigma \to \Sigma$ representing $\psi$ such that for each annulus, $h|_{\Sigma_i}$ is a non-trivial Dehn twist, and for the other subsurfaces, $h|_{\Sigma_i}$ is either a pseudo-Anosov homeomorphism or the identity. Suppose the subsurfaces are numbered such that $h|_{\Sigma_i}$ is not the identity for $1 \leq i \leq n$. For $1 \leq i \leq n$, let $h_i \colon \Sigma \to \Sigma$ agree with $h$ on $\Sigma_i$ and be the identity on $\Sigma - \Sigma_i$. The disintegration of $\psi$ is the abelian group generated by $\{h_i \, | \, 1 \leq i \leq n\}$. 

  A (rotationless) outer automorphism has a normal form given by a topological representative $g \colon G \to G$ called a completely split train track map of a finite graph $G$. The graph $G$ decomposes as a disjoint union of subgraphs called strata, and $g$ restricted to a non-zero stratum is either exponentially growing, non-exponentially growing, or the identity. However, unlike surface homeomorphisms, these strata might not be invariant under $g$. The disintegration process determines certain invariant subgraphs of $G$ and defines an abelian group $D(\phi)$. For example, let $\phi$ be the atoroidal outer automorphism with a completely split train track representative on a 6-petaled rose with edges labeled $\{a,b,c,d,e,f\}$ given by $$g: a \mapsto ac , b \mapsto a, c \mapsto b, d \mapsto df, e \mapsto d, f \mapsto e. $$ The group $D(\phi)$ is isomorphic to $\Z^2$ and is generated by $$g_1: a \mapsto ac , b \mapsto a, c \mapsto b, d \mapsto d, e \mapsto e, f \mapsto f,$$ $$g_2: a \mapsto a , b \mapsto b, c\mapsto c, d \mapsto df, e \mapsto d, f \mapsto e.$$ In particular, $D(\phi)$ is not purely atoroidal. %The outer automorphism $\phi$ restricted to $\langle a, b, c \rangle$ and to $\langle d, e, f \rangle$ is irreducible by \cite{bestvinahandel} and also of Levitt-type \cite{bestvinafeighn95} and hence non-geometric. This implies that $\phi$ is atoroidal on the two rank 3 free factors and also atoroidal on $F_6$.
  \end{remark}

  \begin{proof}[Proof of Proposition~\ref{prop:pur_ator_Fn}]
  Assume towards a contradiction $A \cong \Z^2$ is a purely atoroidal subgroup of $\Out(F_r)$. By \cite[Theorem 7.2]{fh09}, there exists an element $\phi \in A$ such that $D(\phi) \cap A$ is a finite-index subgroup of $A$,  where $D(\phi)$ is the disintegration subgroup of $\phi$. We will analyze the subgroup $D(\phi)$ to obtain an element $\psi \in D(\phi) \cap A$ that is not purely atoroidal. 
  
  Let $f \colon G \to G$ be a completely split train track representative of $\phi$. Then $G$ has a filtration $G_1 \subset G_2 \subset \ldots \subset G_K$, where each filtration element $G_i$ is an $f$-invariant subgraph for $1 \leq i \leq K$. Let the stratum $H_i$ be the closure of $G_i - G_{i-1}$. The element $\phi$ is atoroidal, so the graph $G$ has no closed indivisible Nielsen paths, (quasi)-exceptional paths, or non-fixed non-exponentially growing edges. Thus, each stratum has one of three types: (i) exponentially growing, (ii) fixed non-exponentially growing, or (iii) zero stratum. 
  
  The B-graph used to define $D(\phi)$ in this setting has vertex set in one-to-one correspondence with the exponentially growing stratum of $G$. There is an edge $\{H_i, H_j\}$ in the B-graph for $i >j$ if there exists an edge $e_i \subset H_i$ so that $f(e_i)$ intersects $H_j$ in a certain way; see~\cite[Definition 6.3]{fh09}. By definition of $D(\phi)$, the rank of $D(\phi)$ gives a lower bound to the number of components of the $B$-graph. Thus, the B-graph has components $B_1, \ldots, B_k$ with $k \geq 2$. Let $B_1$ denote the component of the B-graph containing $H_r$, the exponentially growing stratum in $G$ with the smallest value of $r$. Let $X_1 \subset G$ be the union of the exponentially growing stratum in $B_1$ and their associated zero stratum. Let $X_1' = X_1 \cup G_r$. Since $H_r$ is exponentially growing, $H_r$ is connected and $G_r$ is not a forest. Hence, the rank of $\pi_1(X_1')$ is at least one. 
  
  Since the subgraph $X_1$ contains an exponentially growing stratum, there exists a nontrivial lamination $\Lambda$ associated to $X_1$. The lamination $\Lambda$ is stabilized by $D(\phi)$, and there is a corresponding Perron--Frobenius stretch factor homomorphism $PF_{\Lambda} \colon D(\phi) \cap A \to \integers$. Since $D(\phi) \cap A \cong \Z^2$, the kernel has rank at least one, so there exists a non-trivial element $\psi \in D(\phi) \cap A$ that does not stretch $\Lambda$. By definition of $D(\phi)$, the element $\psi$ is the identity on $X_1$. Moreover, since $X_1'$ is the union of $X_1$ and a collection of fixed non-exponentially growing edges, the element $\psi$ fixes $X_1'$. Since $X_1'$ has rank at least one, the element $\psi$ is not atoroidal, a contradiction. 
    \end{proof}  
  
  \begin{cor} \label{cor:pur_tor_vFn} 
      If $H$ is a finitely generated virtually free group, then there does not exist a purely atoroidal subgroup of $\Out(H)$ isomorphic to $\Z^2$. 
  \end{cor}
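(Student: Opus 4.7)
The plan is to reduce to the free group case and invoke Proposition~\ref{prop:pur_ator_Fn}. If $H$ is virtually cyclic, then any outer automorphism has some power preserving the conjugacy class of a chosen infinite-order element (since all such elements are commensurable in $H$ and their conjugacy classes are permuted), so $\Out(H)$ contains no atoroidal element and hence no purely atoroidal subgroup. Therefore assume $H$ is not virtually cyclic. Since $H$ is finitely generated it has only finitely many subgroups of any given finite index, so intersecting all subgroups of $H$ of index equal to that of some free finite-index subgroup yields a characteristic free subgroup $K \leq H$ of finite index and rank $r \geq 2$.

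The key transfer observation is that if $[\alpha] \in \Out(H)$ is atoroidal, then for any lift $\alpha \in \Aut(H)$ the class $[\alpha|_K] \in \Out(K)$ is atoroidal: if $(\alpha|_K)^n(k) = k_0 k k_0^{-1}$ for some $k \in K$ of infinite order and $k_0 \in K$, the same equation holds in $H$, contradicting atoroidality of $[\alpha]$.

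Now let $A \cong \Z^2 \leq \Out(H)$ be purely atoroidal, and lift generators to $\alpha_1, \alpha_2 \in \Aut(H)$. Their commutator is inner, $[\alpha_1, \alpha_2] = i_h$ for some $h \in H$, so restricting gives $[\phi_1, \phi_2] = i_h|_K$ in $\Aut(K)$ where $\phi_i = \alpha_i|_K$. Let $I \leq \Aut(K)$ denote the image of $H$ under the conjugation map $h \mapsto i_h|_K$; then $\mathrm{Inn}(K) \leq I$ and $I/\mathrm{Inn}(K)$ is finite, since the centralizer $Z_H(K)$ is a finite normal subgroup of $H$ (it intersects $K$ trivially because $K = F_r$ with $r \geq 2$ has trivial center, hence injects into the finite group $H/K$). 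The identity $R(\alpha)(i_h|_K)R(\alpha)^{-1} = i_{\alpha(h)}|_K$ shows that $I$ is normal in the image $R(\Aut(H))$ of the restriction map $R \colon \Aut(H) \to \Aut(K)$, so there is an induced homomorphism $\Out(H) \to R(\Aut(H))/I$. A standard crossed-homomorphism argument shows $\ker R$ is finite (automorphisms fixing $K$ pointwise correspond to cocycles $H \to Z_H(K)$, a finite set), so this homomorphism has finite kernel. Hence $A$ maps injectively onto a $\Z^2$ subgroup $\bar A \leq R(\Aut(H))/I$.

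To conclude, pull $\bar A$ back through the finite-kernel projection $R(\Aut(H))/\mathrm{Inn}(K) \twoheadrightarrow R(\Aut(H))/I$; the preimage is an extension of $\Z^2$ by the finite group $I/\mathrm{Inn}(K)$, hence contains a finite-index subgroup $B_0 \cong \Z^2$ inside $\Out(K)$. Each nontrivial $b \in B_0$ equals $[\alpha'|_K]$ for some $\alpha' \in \Aut(H)$ lifting the corresponding nonzero element of $A$---every element of the relevant coset of $I/\mathrm{Inn}(K)$ is realized by varying the lift through right multiplication by an $i_{h'} \in \mathrm{Inn}(H)$---so $b$ is atoroidal in $\Out(K)$ by the transfer observation. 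Thus $B_0 \cong \Z^2$ is a purely atoroidal subgroup of $\Out(F_r)$, contradicting Proposition~\ref{prop:pur_ator_Fn}. The main delicate point is that restriction does not directly induce a homomorphism $\Out(H) \to \Out(K)$, so the $\Z^2$ structure must be transported through the intermediate quotient $R(\Aut(H))/I$ and then lifted back to $\Out(K)$ while ensuring every lift remains atoroidal.
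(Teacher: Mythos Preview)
Your proof is correct and shares the overall skeleton with the paper's argument: pass to a characteristic finite-index free subgroup $K = F_r$, observe that atoroidality transfers under restriction, and invoke Proposition~\ref{prop:pur_ator_Fn}. The difference lies in how you handle the fact that restriction $\Aut(H) \to \Aut(K)$ does not descend to a map $\Out(H) \to \Out(K)$. You resolve this structurally: you introduce the intermediate quotient $R(\Aut(H))/I$ (with $I$ the image of $\mathrm{Inn}(H)$) to get a well-defined homomorphism out of $\Out(H)$ with finite kernel, and then lift a $\Z^2$ back into $\Out(K)$ by passing to finite index in the preimage. The paper instead argues by a direct computation: with chosen lifts $\Phi, \Psi$ of the generators one has $[\Phi^n, \Psi] = i_{h_n}$ where $h_{n+1} = \Phi(h_n) h$, and the images $\bar h_n \in H/F_r$ are governed by the bijection $g \mapsto \bar\Phi(g)\bar h$ of the finite set $H/F_r$, hence periodic; this forces $h_M \in F_r$ for some $M$, so $\phi^M$ and $\psi$ restrict to commuting elements of $\Out(F_r)$. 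Your route isolates the obstruction group $I/\mathrm{Inn}(K)$ cleanly and would adapt verbatim to larger abelian subgroups, at the cost of more moving parts (the cocycle argument for $\ker R$ and the virtually-$\Z^2$ step producing $B_0$); the paper's route is shorter and entirely explicit.
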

  \begin{proof}
    Suppose there exists a purely atoroidal subgroup $A = \la \phi, \psi \ra \cong \Z^2 \subset \Out(H)$. We will use this subgroup to construct a purely atoroidal $\Z^2$ subgroup in the outer automorphism group of a free group, contradicting Proposition~\ref{prop:pur_ator_Fn}.

    Let $F_r \leq H$ be a free subgroup of finite index $N$ in $H$. Since $H$ is finitely generated, there are finitely many subgroups of a given index. Therefore, we can take $F_r$ to be a characteristic subgroup of $H$ (for instance, the intersection of all free subgroups of a given finite index). We fix a choice $\Phi,\Psi \in \operatorname{Aut}(H)$ of representatives of $\phi$ and $\psi$, respectively. Since $F_r$ is characteristic we can take the restrictions $\Phi' = \Phi|_{F_r}$, $\Psi' = \Psi|_{F_r} \in \Aut(F_r)$ with images $\phi', \psi' \in \Out(F_r)$.

    We claim there exists $M > 0$ such that $[\phi'^M, \psi'] = 1 \in \Out(F_r)$.
    Since $\phi$ and $\psi$ are commuting elements of $\Out(H)$, there exists $h \in H$ such that $[\Phi, \Psi] = i_h$.
    We compute that $[\Phi^n, \Psi] = i_{h_n}$ for $h_n = \Phi^{n-1}(h) \dots \Phi(h) h$.
    Now we need to show that for some $M > 0$ we have $h_M \in F_r$.
    Note that $h_{n+1} = \Phi(h_n) h$.
    Let $\overline{h}_n$ be the image of $h_n$ in $H / F_r$.
    Since $F_r$ is characteristic, $\Phi$ induces $\overline{\Phi} \in \Aut(H / F_r)$ and $\overline{h}_{n+1} = \overline{\Phi}(\overline{h}_n) \overline h$.
    The mapping $g \mapsto \overline{\Phi}(g) h$ is a bijection of the finite set $H / F_r$, so it is periodic.
    Since $\overline{h}_0 = 1$, there exists $M > 0$ with $\overline{h}_M = 1$, that is, $h_M \in F_r$.

    Thus, after replacing $\phi$ with $\phi^M$ if necessary, we have a homomorphism $A \to \Out(F_r)$ defined by $\phi \mapsto \phi', \psi \mapsto \psi'$, which sends every outer automorphism $\theta$ to the outer automorphism $\theta'$ given by restricting some lift of $\theta$.
    Every non-trivial $\theta \in A$ is atoroidal in $\Out(H)$ and therefore $\theta'$ is atoroidal in $\Out(F_r)$ (since conjugacy in $F_r$ is a finer equivalence relation than conjugacy in $H$).
    This gives both that the homomorphism $A \to \Out(F_r)$ is injective and that its $\Z^2$ image is purely atoroidal.
  \end{proof}
  
  \begin{thm}\label{thm:pur_ator}
    Let $H$ be a hyperbolic group. Then there does not exist a purely atoroidal subgroup of $\Out(H)$ isomorphic to $\Z^2$. 
  \end{thm}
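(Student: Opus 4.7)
Suppose for contradiction that $A = \la \phi, \psi \ra \cong \Z^2$ is purely atoroidal in $\Out(H)$. Following the outline in the introduction, my plan is to use the Stallings--Dunwoody and Bowditch JSJ decompositions to reduce to the case that $H$ is virtually free or virtually a closed surface group, and then invoke Corollary~\ref{cor:pur_tor_vFn} or McCarthy's theorem~\cite{mccarthy}, respectively.

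First I would reduce to the case that $H$ is one-ended by examining a Stallings--Dunwoody decomposition of $H$ (Theorem~\ref{thm:dunwoody}). If all vertex groups are finite then $H$ is virtually free and Corollary~\ref{cor:pur_tor_vFn} gives a contradiction immediately. Otherwise, pick a one-ended vertex group $H_v$. Since the edge groups are finite and $H_v$ is one-ended, $H_v$ is self-normalizing in $H$ (it fixes a unique vertex of the Bass--Serre tree). By Lemma~\ref{lemma_one_end}, after replacing $A$ by a finite-index subgroup (still $\cong \Z^2$) I may assume $\phi$ and $\psi$ preserve $[H_v]$, and then restriction defines a homomorphism $r\colon A \to \Out(H_v)$. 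This $r$ preserves atoroidality because $H_v$-conjugacy implies $H$-conjugacy, and it is injective: if $\theta \in A$ lay in $\ker r$ then a suitable representative would fix $H_v$ pointwise, so $\theta$ would preserve the $H$-conjugacy class of every infinite-order element of $H_v$, forcing $\theta = 1$ by pure atoroidality. Thus $r(A)$ is a purely atoroidal $\Z^2$ in $\Out(H_v)$, so I may replace $H$ by the one-ended hyperbolic group $H_v$.

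Next, assume $H$ is one-ended. If $H$ is Fuchsian, then $H$ is virtually a closed hyperbolic surface group, and the argument in Corollary~\ref{cor:pur_tor_vFn} goes through verbatim with a characteristic finite-index closed surface subgroup $\Sigma_g \leq H$ in place of the free subgroup, producing a purely atoroidal $\Z^2$ in $\Out(\Sigma_g) \cong \mathrm{MCG}^{\pm}(\Sigma_g)$. Under this identification atoroidal outer automorphisms correspond to pseudo-Anosov mapping classes, and McCarthy's theorem on commuting pseudo-Anosovs~\cite{mccarthy} excludes such a $\Z^2$. If $H$ is not Fuchsian, apply Bowditch's JSJ decomposition (Theorem~\ref{thm:bow_jsj}) and pass to a finite-index subgroup of $A$ so that $A \subset \Out^0(H)$. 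If the JSJ has a rigid (type (3)) vertex group $H_v$, then by Theorem~\ref{thm:hyp_rig_fin} each restriction $\phi|_{H_v} \in \Out(H_v)$ has finite order; since $A$ is finitely generated abelian, its image in $\Out(H_v)$ is finite, and the kernel $A' \leq A$ is still $\cong \Z^2$. Each element of $A'$ has a representative fixing $H_v$ pointwise and hence fixes the conjugacy class of every infinite-order element of $H_v$; such elements exist because $H_v$ is an infinite hyperbolic group (full quasi-convex, not two-ended), contradicting pure atoroidality.

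The remaining sub-case is that $H$ is one-ended, non-Fuchsian, and its JSJ contains no rigid vertex group, so every vertex group is either 2-ended or hanging Fuchsian. In this situation $H$ is virtually the fundamental group of a closed 2-orbifold of negative Euler characteristic, obtained by gluing the bounded Fuchsian orbifolds along their annular peripheral subgroups, hence virtually a closed surface group, and the Fuchsian case above concludes. The main obstacle I anticipate is justifying this last identification rigorously; it is standard in the JSJ literature but deserves a careful citation. All other pieces -- the self-normalization of one-ended Stallings--Dunwoody vertex groups, the passage of atoroidality through restriction, and the virtually-surface analogue of Corollary~\ref{cor:pur_tor_vFn} -- are routine adaptations of arguments already in the paper.
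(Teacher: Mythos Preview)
Your overall strategy matches the paper's, and most of your reductions---Stallings--Dunwoody to pass to a one-ended vertex group, the self-normalizing argument to make restriction well-defined, and the Fuchsian case via McCarthy---are essentially correct and parallel the paper's proof.

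The final sub-case, however, contains a genuine error rather than a missing citation. It is \emph{false} that a one-ended non-Fuchsian hyperbolic group whose Bowditch JSJ has no rigid vertices must be virtually a closed surface group. A concrete counterexample: take three once-punctured tori and glue them along their common boundary circle. The resulting group is one-ended and hyperbolic; its JSJ has a single $2$-ended vertex of valence three, adjacent to three hanging Fuchsian vertices, and no rigid vertex. But this group is not virtually a surface group (its Gromov boundary is not a circle). Indeed, Example~\ref{example:normal_sub}(b) in the paper exhibits exactly such groups as normal subgroups. Your picture of ``gluing the bounded Fuchsian orbifolds along their annular peripheral subgroups'' only produces a surface when each $2$-ended vertex has valence two; nothing in Bowditch's theorem forces this.

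The paper sidesteps this entirely with a much shorter argument in the one-ended non-Fuchsian case. Since $A \cong \Z^2 \leq \Out(H)$, the group $\Out(H)$ is infinite, so by Paulin~\cite{paulin} the group $H$ splits over a $2$-ended subgroup and hence its JSJ is non-trivial. By the structure in Theorem~\ref{thm:bow_jsj} (no two adjacent vertices have the same type), a non-trivial JSJ always contains a $2$-ended vertex group $H_v$. After passing to a finite-index subgroup, $A$ preserves $[H_v]$ and therefore preserves the $H$-conjugacy class of any infinite-order element of $H_v$, contradicting pure atoroidality immediately. No case split on rigid versus non-rigid vertices, and no appeal to Theorem~\ref{thm:hyp_rig_fin}, is needed. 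Your argument can be repaired along the same lines: in your ``no rigid vertex'' sub-case the JSJ is still non-trivial (else $H$ would itself be hanging Fuchsian, hence Fuchsian), so a $2$-ended vertex group exists and the same contradiction applies.
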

  \begin{proof}
    Without loss of generality, we may assume $H$ is infinite, and not virtually $\Z$. Suppose there exists a purely atoroidal subgroup $A = \la \phi, \psi \ra \cong \Z^2 \leq \Out(H)$.
  
    Suppose first that the group $H$ is one-ended. Since centralizers of pseudo-Anosov homeomorphisms, which are also the atoroidal homeomorphisms of a surface, are virtually cyclic \cite{mccarthy}, we conclude by an analogous argument to that in the proof of Corollary~\ref{cor:pur_tor_vFn} that the group $H$ is not Fuchsian. By \cite{paulin}, the group $H$ splits over a $2$-ended subgroup since the outer automorphism group of $H$ is infinite. Thus, the group  $H$ has a non-trivial JSJ decomposition containing a $2$-ended vertex group $H_v$. Up to passing to a finite-index subgroup, the group $A$ preserves $[H_v]$, a contradiction. 
 
    Now suppose $H$ is infinite-ended. Let $\cH$ be a Stallings--Dunwoody splitting of $H$ over finite groups. Since $H$ is not virtually free by Corollary~\ref{cor:pur_tor_vFn}, there exists a one-ended vertex group $H_v$ of $\cH$. By Lemma~\ref{lemma_one_end}, there exists $m, n \in \N^+$ such that $\phi^m([H_v]) = [H_v]$ and $\psi^n([H_v]) = [H_v]$. Let $N=mn$. We first claim that $\la \phi^N|_{H_v}, \psi^N|_{H_v} \ra$ is an abelian subgroup of $\Out(H_v)$. To see this, let $\Psi, \Phi \in \Aut(H)$ be representatives of $\psi$ and $\phi$, respectively, such that $\Psi^N(H_v) = H_v$, $\Phi^N(H_v) = H_v$, and the commutator $[\Phi^N, \Psi^N]$ is an inner automorphism given by some $h \in H$. Then $[\Phi^N, \Psi^N](H_v) = H_v = hH_vh^{-1}$.  The subgroup $H_v$ fixes a unique vertex $v$ of the Bass--Serre tree corresponding to $\cH$, and $hH_vh^{-1}$ fixes a unique vertex $h\cdot v$. Thus, $v = h\cdot v$, so $h \in H_v$. Therefore, $\phi^N$ and $\psi^N$ restrict to commuting outer automorphisms of $H_v$.  Now, $\la \phi^N|_{H_v}, \psi^N|_{H_v} \ra$ is a purely atoroidal subgroup of $\Out(H_v)$ isomorphic to $\Z^2$. The argument from the previous paragraph yields a contradiction. 
  \end{proof}

%%%%%%%%%%%%%%
%%%%%%%%%%%%%%  
    \section{The nonexistence of Cannon--Thurston maps}
%%%%%%%%%%%%%%
%%%%%%%%%%%%%%    
    
\subsection{Hyperbolic normal subgroups}    

    To prove that Cannon--Thurston maps do not exist for hyperbolic normal subgroups of a $\CAT(0)$ group with isolated flats, we use the following criterion.
  \begin{lemma} \label{lemma:crit_nonexist}
   Let $H \leq G$ such that $H$ acts geometrically on $Y$ and $G$ acts geometrically on $X$, where $X$ and $Y$ are hyperbolic or $\CAT(0)$ spaces. If there exists an element $h \in H$ so that 
   \begin{enumerate}
    \item $h$ acts by North-South dynamics on $\p Y$;
    \item $h$ acts by translation on a flat $F \subset X$; \emph{and}
    \item  $\p F \subset \Lambda_xH \subset \p X$,
   \end{enumerate}
      then the Cannon--Thurston map does not exist for $(H, G)$.
  \end{lemma}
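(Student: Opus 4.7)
The plan is to argue by contradiction: if a Cannon--Thurston map $\hi \colon \p Y \to \p X$ exists, hypotheses (1)--(3) force $h$ to act on $\hi(\p Y)$ with two incompatible dynamical behaviors. Assuming such an $\hi$ exists, hypothesis~(1) together with Lemma~\ref{lemma:fp_to_fp} immediately yields that $h$ acts by North--South dynamics on $\hi(\p Y)$, fixing the two points $\hi(h^{\pm\infty})$ and pulling every other point of $\hi(\p Y)$ under high iterates into arbitrarily small neighborhoods of these two fixed points. In particular any fixed point of $h$ in $\hi(\p Y)$ other than $\hi(h^{\pm\infty})$ would immediately contradict the dynamics.

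The next step is to verify the inclusion $\p F \subset \hi(\p Y)$, which by hypothesis~(3) reduces to showing $\Lam_x H \subset \hi(\p Y)$. Given $\xi \in \Lam_x H$, one picks a sequence of distinct $h_i \in H$ with $h_i \cdot x \to \xi$; since $H$ acts properly on the proper space $Y$, no subsequence of $h_i \cdot y$ can remain bounded in $Y$, so after passing to a subsequence we obtain $h_i \cdot y \to \eta$ for some $\eta \in \p Y$, and the well-definedness of $\hi$ gives $\hi(\eta) = \xi$. Hence $\Lam_x H \subset \hi(\p Y)$, and in particular $\p F \subset \hi(\p Y)$.

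Finally, by hypothesis~(2) the element $h$ acts on the flat $F$ as a Euclidean translation; a translation sends each geodesic ray in $F$ to a parallel (hence asymptotic) ray, so $h$ fixes $\p F$ pointwise. Since by our convention flats have dimension at least two, $\p F$ is a sphere of dimension at least one and therefore contains points distinct from the two points $\hi(h^{\pm\infty})$. Picking such a point $w \in \p F \subset \hi(\p Y)$, the North--South dynamics established in the first paragraph force $h^n(w)$ to leave every sufficiently small neighborhood of $w$, contradicting $h^n(w) = w$. The only step requiring any care is the inclusion $\Lam_x H \subset \hi(\p Y)$, but this follows directly from a short compactness/properness argument as sketched, so the argument is essentially a clean combination of Lemma~\ref{lemma:fp_to_fp} with the pointwise-fixing of $\p F$ by a translation.
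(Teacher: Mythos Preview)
Your proof is correct and follows essentially the same approach as the paper: assume the Cannon--Thurston map exists, invoke Lemma~\ref{lemma:fp_to_fp} to get North--South dynamics on $\hi(\p Y)$, and contradict this with the fact that $h$ fixes $\p F \subset \hi(\p Y)$ pointwise. Your argument is in fact slightly more detailed than the paper's in two respects: you spell out the compactness/properness argument for $\Lam_x H \subset \hi(\p Y)$ (the paper just cites $H$-equivariance, which is really shorthand for the definition of the map together with your subsequence argument), and you explicitly note that $\dim F \geq 2$ is needed so that $\p F$ contains a third fixed point beyond $\hi(h^{\pm\infty})$.
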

  \begin{proof}
   Suppose there exists such an element $h$ and let its fixed points on $\p Y$ be denoted $h^{\pm \infty}$. Suppose towards a contradiction that the Cannon--Thurston map $\hi:\p Y \rightarrow \p X$ exists. Since the Cannon--Thurston map is $H$-equivariant (Remark~\ref{rem:H_equivariant}), $\Lambda_xH \subset \hi(\p Y)$. By Lemma~\ref{lemma:fp_to_fp}, the element $h$ acts by North-South dynamics on $\hi(\p Y)$ fixing the points $\hi(h^{\pm \infty})$. However, $h$ acts by translations on the flat $F$; hence, $h$ acts trivially on $\p F \subset \Lambda_xH \subset \hi(\p Y)$, a contradiction.
  \end{proof}
  
  \begin{prop} \label{prop_elt_H_fstab}
    Let $G$ be a $\CAT(0)$ group with isolated flats, and let $H \triangleleft G$ be a hyperbolic normal subgroup. Then there exists an infinite order element $h \in H$ which acts by translation on a flat under any geometric action of $G$ on a $\CAT(0)$ space $X$.
  \end{prop}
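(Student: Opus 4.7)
The plan is to argue by contradiction using Theorem~\ref{thm:pur_ator}. Suppose no infinite-order element of $H$ acts by translation on any flat in $X$; then by Lemma~\ref{cor:char_isoms}, every infinite-order element of $H$ is rank-one in $X$. I will construct a purely atoroidal subgroup of $\Out(H)$ isomorphic to $\Z^2$, contradicting Theorem~\ref{thm:pur_ator}.

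First I produce the $\Z^2$. Since $X$ has isolated flats it contains a flat $F$, and by the Flat Torus Theorem one can choose a torsion-free $\la a,b\ra\cong\Z^2$ inside $\Stab_G(F)$ whose generators translate $F$. The contradiction hypothesis forces $H\cap\la a,b\ra=1$, since every nontrivial element of the torsion-free abelian group $\la a,b\ra$ translates $F$ and would give an infinite-order element of $H$ translating $F$. Thus $\la a,b\ra$ injects into $Q=G/H$, and Lemma~\ref{lemma:inf_ord_elt}(3) shows that the induced commuting outer automorphisms $\phi_a,\phi_b\in\Out(H)$ have infinite order. To upgrade to $\la\phi_a,\phi_b\ra\cong\Z^2$ inside $\Out(H)$, suppose $\phi_{a^mb^n}$ were inner for some $(m,n)\neq(0,0)$; writing $c=a^mb^n$, there is $h_0\in H$ with $z:=ch_0^{-1}\in Z_G(H)$. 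Since $z$ centralizes $h_0$, the elements $c$ and $h_0$ commute in $G$. By Lemma~\ref{lemma_cat_ls_normal}, $\Lam_xH=\Lam_xG=\p X$, so $H$ is non-elementary hyperbolic (as $\p X$ contains flat boundaries, hence more than two points); in particular $Z(H)$ is finite. If $z$ had infinite order then $\la z\ra\cap H\leq Z(H)$ would be finite, hence trivial, giving an embedding $\la z\ra\times H\cong\Z\times H\hookrightarrow G$; Lemma~\ref{lemma:prod_cat0} would then force $H$ to be virtually abelian and hence virtually $\Z$, contradicting non-elementarity. Thus $z$ is torsion, and $(ch_0^{-1})^K=c^Kh_0^{-K}=1$ for some $K>0$ yields $c^K\in H\cap\la a,b\ra=1$, contradicting that $c$ has infinite order.

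With $\la\phi_a,\phi_b\ra\cong\Z^2$ established, Theorem~\ref{thm:pur_ator} furnishes $(m,n)\neq(0,0)$ and an infinite-order $h\in H$ such that some power of $\phi_a^m\phi_b^n$ preserves $[h]$; after replacing $(m,n)$ by a multiple we may assume $\phi_a^m\phi_b^n$ itself preserves $[h]$. Unwinding, there is $h_0\in H$ with $(a^mb^n)h(a^mb^n)^{-1}=h_0hh_0^{-1}$, so $g:=h_0^{-1}a^mb^n\in G$ commutes with $h$. The contradiction hypothesis makes $h$ rank-one, so $\la g,h\ra$ lies inside the virtually cyclic centralizer $Z_G(h)$ (a standard fact for rank-one isometries under a geometric action), producing a relation $g^k=h^\ell$ with $(k,\ell)\neq(0,0)$. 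Projecting to $Q=G/H$, the left side becomes $a^{mk}b^{nk}$ and the right side becomes trivial, so $a^{mk}b^{nk}\in H\cap\la a,b\ra=1$ forces $mk=nk=0$; the subcases $k=0$ (forcing $\ell=0$) and $k\neq 0$ (forcing $m=n=0$) each contradict the choice of $(m,n)$ or $(k,\ell)$.

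The main obstacle is the middle step: establishing $\la\phi_a,\phi_b\ra\cong\Z^2$ inside $\Out(H)$ requires ruling out any nontrivial relation in which $\phi_{a^mb^n}$ is inner, and the heart of this argument is Lemma~\ref{lemma:prod_cat0}, which converts an infinite-order element of $Z_G(H)$ into an embedded $\Z\times H$ in $G$ and thus forces $H$ to be virtually $\Z$, contradicting non-elementarity. The remaining steps rely only on the Flat Torus Theorem, Lemma~\ref{cor:char_isoms}, the virtual cyclicity of rank-one centralizers, and elementary bookkeeping in $Q=G/H$.
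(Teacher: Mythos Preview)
Your argument is correct and follows the same overall strategy as the paper---find a $\Z^2$ in $G$, push it to $\Out(H)$, and invoke Theorem~\ref{thm:pur_ator}---but your execution is more roundabout in two places. First, you spend a paragraph proving that $\la a,b\ra\to\Out(H)$ is injective (via Lemma~\ref{lemma:prod_cat0} and a torsion analysis of $z=ch_0^{-1}$); the paper simply does not care: if the map has kernel, a nontrivial $g\in A$ already conjugates every $h\in H$ as some $h_0$ does, so any infinite-order $h\in H$ commutes with $gh_0^{-1}$, and one lands in the same endgame without ever needing injectivity. Second, in the endgame you argue by contradiction through the virtual cyclicity of $Z_G(h)$ for a rank-one $h$; the paper instead observes directly that since $A\cap H=1$, no nonzero power of $gh_0^{-1}$ lies in $H$, so the abelian group $\la h, gh_0^{-1}\ra$ can have no nontrivial relation and is therefore $\Z^2$, and the Flat Torus Theorem finishes. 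The paper's route avoids both Lemma~\ref{lemma:prod_cat0} and the (true but uncited) fact about rank-one centralizers, and is accordingly shorter; your route is sound but imports more machinery than necessary.
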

  \begin{proof}
    Consider any subgroup $A \leq G$ with $A \cong \Z^2$; such a subgroup exists by our definition of isolated flats requiring the set of flats to be non-empty together with \cite[Lemma 3.1.2]{hruskakleiner}, a converse to the Flat Torus Theorem. We claim that some infinite order $h \in H$ lies in a $\Z^2$ subgroup of $G$, which implies the proposition by the standard Flat Torus Theorem. If $A \cap H \neq 1$ then we are done, so suppose $A \cap H = 1$. Consider the map $A \to \Out(H)$ coming from conjugation in $G$. If it has kernel, then there exists some non-trivial $g \in A$ and some $h_0 \in H$ such that $h^g = h^{h_0}$ for all $h \in H$. If not, then its image is isomorphic to $\Z^2$ and thus cannot be purely atoroidal by Theorem~\ref{thm:pur_ator}, so there is some non-trivial $g \in A$ and some infinite order $h \in H$ such that $h^g = h^{h_0}$ for some $h_0 \in H$. In either case there is some infinite order $h \in H$ commuting with $g h_0^{-1}$. Since $A \cap H = 1$ we see that $g$ has infinite order in the quotient $G/H$ and thus no non-zero power of $g h_0^{-1}$ is in $H$, so $\langle h, g h_0^{-1} \rangle \cong \Z^2$.
  \end{proof}
      
  \begin{thm} \label{thm_noCT_hyp}
    Let $G$ be a $\CAT(0)$ group with isolated flats, and let $H \triangleleft G$ be an infinite hyperbolic normal subgroup.    Then the Cannon--Thurston map does not exist for $(H, G)$. 
  \end{thm}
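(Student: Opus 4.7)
The plan is to verify the hypotheses of Lemma~\ref{lemma:crit_nonexist} directly. Fix any geometric action of $G$ on a $\CAT(0)$ space $X$ with isolated flats and any geometric action of $H$ on a hyperbolic space $Y$ (for instance, a Cayley graph of $H$).

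First, I would apply Proposition~\ref{prop_elt_H_fstab} to obtain an infinite-order element $h \in H$ which acts by translation on a flat $F \subset X$. This immediately gives hypothesis (2) of Lemma~\ref{lemma:crit_nonexist}. Because $h$ has infinite order inside the hyperbolic group $H$, Theorem~\ref{thm_hyp_NS} gives that $h$ acts with North--South dynamics on $\p Y$, which is hypothesis (1).

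For hypothesis (3), I would note that $G$ acts cocompactly on $X$, so $\Lambda_x G = \p X$ for every basepoint $x \in X$. Since $H \triangleleft G$ is infinite and $G$ is $\CAT(0)$ with isolated flats, Lemma~\ref{lemma_cat_ls_normal} applies and gives $\Lambda_x H = \Lambda_x G = \p X$. In particular, $\p F \subset \p X = \Lambda_x H$, so hypothesis (3) holds. Lemma~\ref{lemma:crit_nonexist} then yields that the Cannon--Thurston map for $(H, G)$ cannot exist.

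There is essentially no obstacle remaining in the proof itself, since all the genuine difficulty has been packaged into the preceding results. The main conceptual content is in Proposition~\ref{prop_elt_H_fstab} (which relies on Theorem~\ref{thm:pur_ator} on purely atoroidal $\Z^2$ subgroups of $\Out(H)$) and in the dynamical obstruction encoded by Lemma~\ref{lemma:crit_nonexist} (whose proof combines $H$-equivariance of the Cannon--Thurston map with the incompatibility between North--South dynamics on $\p Y$ and the trivial action on $\p F$ by an element translating along a flat). The only minor care needed is to confirm that cocompactness of the $G$-action gives $\Lambda_x G = \p X$ and that the choice of model spaces $X, Y$ is irrelevant for the existence of the Cannon--Thurston map, both of which are standard.
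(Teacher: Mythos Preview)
Your proposal is correct and follows essentially the same approach as the paper's proof: apply Proposition~\ref{prop_elt_H_fstab} for hypothesis~(2), Theorem~\ref{thm_hyp_NS} for hypothesis~(1), and Lemma~\ref{lemma_cat_ls_normal} for hypothesis~(3), then invoke Lemma~\ref{lemma:crit_nonexist}. Your treatment of hypothesis~(3) is in fact slightly more explicit than the paper's, spelling out that $\Lambda_x G = \p X$ by cocompactness before applying Lemma~\ref{lemma_cat_ls_normal}.
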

  \begin{proof}
    By Proposition~\ref{prop_elt_H_fstab}, there exists an infinite-order element $h \in H$ which acts by translation on a flat in $X$. Since $H$ is hyperbolic, the element $h$ acts by North-South dynamics on $\p H$ by Theorem~\ref{thm_hyp_NS}. By Lemma~\ref{lemma_cat_ls_normal}, $\hi(\p H) = \p G \cong \p X$. Thus, the hypotheses of Lemma~\ref{lemma:crit_nonexist} are satisfied, and the Cannon--Thurston map does not exist.  
  \end{proof}
  
  The above results imply that the Cannon--Thurston map for the pair $(H,G)$ also does not exist with respect to their Morse boundaries, $\p_MH$ and $\p_MG$. In this setting, we define the Cannon--Thurston map as before.
  
  \begin{defn}
      Let $H \leq G$ so that $H$ acts geometrically on $Y$ and $G$ acts geometrically on $X$, where $X$ and $Y$ are proper geodesic metric spaces.  Let $x \in X$ and $y \in Y$. A \emph{Cannon--Thurston map} for the pair $(H, G)$ with respect to their Morse boundaries is the map $f \colon \p_M Y \to \p_M X$ defined by \[f\left(\lim_{i \rightarrow \infty} h_i \cdot y\right)=\lim_{i \rightarrow \infty} h_i \cdot x,\] provided $f$ is well-defined and continuous. 
  \end{defn}
  
  \begin{prop} \label{prop:Morse}
   Let $G$ be a $\CAT(0)$ group with isolated flats, and let $H \triangleleft G$ be an infinite hyperbolic normal subgroup.    Then the Cannon--Thurston map does not exist for $(H, G)$ with respect to their Morse boundaries. 
  \end{prop}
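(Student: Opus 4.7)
The plan is to reduce Proposition~\ref{prop:Morse} to the visual-boundary result Theorem~\ref{thm_noCT_hyp} just proved, rather than re-run the dynamical argument in the Morse setting. Two general facts make this reduction possible. First, since $H$ is hyperbolic, its Morse boundary $\p_M H$ coincides, both as a set and as a topological space, with its visual boundary $\p H$: every geodesic in a hyperbolic space is uniformly Morse, so in Cordes's direct-limit construction there is only one nontrivial stratum, whose topology is the usual one on $\p H$. Second, for the $\CAT(0)$ space $X$, the Morse boundary $\p_M X$ sits inside $\p X$ with a topology at least as fine as the subspace topology, so the natural inclusion $j\colon X \cup \p_M X \hookrightarrow X \cup \p X$ is continuous.

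The argument itself then has three short steps. Suppose for contradiction that a continuous, $H$-equivariant Cannon--Thurston map $\hi_M \colon \p_M Y \to \p_M X$ exists. Compose it with the continuous inclusion $\p_M X \hookrightarrow \p X$ to obtain a continuous $H$-equivariant map $\p_M Y \to \p X$; using that $\p_M Y = \p Y$ as topological spaces, this is a continuous $H$-equivariant map $\p Y \to \p X$. Finally, verify that this composition has the defining property of a visual Cannon--Thurston map: given $\xi = \lim_{i\to\infty} h_i \cdot y \in \p Y = \p_M Y$, the Morse map gives $\hi_M(\xi) = \lim_{i\to\infty} h_i \cdot x$ in $X \cup \p_M X$, and continuity of $j$ promotes this to $h_i \cdot x \to \hi_M(\xi)$ in $X \cup \p X$. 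Thus the composition is a visual Cannon--Thurston map for $(H,G)$, contradicting Theorem~\ref{thm_noCT_hyp}.

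The only place where any care is needed is in confirming the compatibility between the Morse and visual compactifications: one must know that a sequence in $X$ which converges to a Morse point in the Morse compactification also converges to the same point in the visual compactification, and likewise that convergence of a sequence to a boundary point in $\p_M Y$ is the same as convergence in $\p Y$. This is the main potential obstacle, but it is essentially automatic: the Morse compactification of $X$ refines the visual compactification by construction, and for the hyperbolic space $Y$ the two compactifications are identical. With these compatibilities recorded, the reduction to Theorem~\ref{thm_noCT_hyp} completes the proof.
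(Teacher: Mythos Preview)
Your argument is correct, but it takes a different route from the paper's. The paper does not reduce to Theorem~\ref{thm_noCT_hyp}; instead it gives a direct obstruction at the level of the Morse boundary. Using Proposition~\ref{prop_elt_H_fstab} it produces an infinite-order $h \in H$ that translates along a flat in $X$, and then simply observes that $h^n \cdot y$ converges in $\p_M H = \p H$ while $h^n \cdot x$ converges (visually) to a point of $\p F$, hence to a point \emph{not} in $\p_M X$. Thus the Morse Cannon--Thurston map is not even well-defined. This uses only Proposition~\ref{prop_elt_H_fstab} and the trivial fact that flat-boundary points are not Morse; it never invokes North--South dynamics or Lemma~\ref{lemma:crit_nonexist}, and it avoids entirely the compatibility questions between the Morse and visual bordifications.

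By contrast, your reduction leans on the full strength of Theorem~\ref{thm_noCT_hyp} and on the topological statements that (i) $Y \cup \p_M Y$ and $Y \cup \p Y$ coincide for hyperbolic $Y$, and (ii) the identity $X \cup \p_M X \to X \cup \p X$ is continuous. These are true (for $\CAT(0)$ $X$ one has the Charney--Sultan bordification, which refines the visual one), but they require citation or a line of justification. Your argument is a tidy black-box reduction and would generalize to any situation where the visual Cannon--Thurston map has already been ruled out; the paper's argument is more elementary and more self-contained, and pinpoints exactly where the Morse map fails: the target limit does not land in the Morse boundary at all.
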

  \begin{proof}
    Suppose $H$ acts geometrically on a proper geodesic metric space $Y$, and suppose the group $G$ acts geometrically on a $\CAT(0)$ space $X$. There exists an infinite-order element $h \in H$ that acts by translation on a flat in $X$ by Proposition~\ref{prop_elt_H_fstab}. Let $y \in Y$, and let $x \in X$ be a point in the aforementioned flat stabilized by $h$. Since $H$ is hyperbolic and $h \in H$ has infinite-order, the sequence $\{h^n \cdot y\}_{n \in \N}$ converges to a point in $\p H \cong \p_MH$. However, the sequence $\{h^n \cdot x\}_{n \in \N}$ converges to a point in the boundary of a flat in $X$, and hence does not converge to a point in $\p_MG$. Thus, the Cannon--Thurston map does not exist for the pair $(H,G)$ with respect to their Morse boundaries. 
  \end{proof}

  \subsection{Normal \texorpdfstring{$\CAT(0)$}{CAT(0)} subgroups with isolated flats}

  \begin{thm} \label{thm_noCT_IFP}
   Let $H \triangleleft G$ be $\CAT(0)$ groups with isolated flats and suppose that $H$ is an infinite, infinite-index normal subgroup of $G$. Then the Cannon--Thurston map does not exist for $(H,G)$.
  \end{thm}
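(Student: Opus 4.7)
The plan is to extend the strategy of Theorem \ref{thm_noCT_hyp}, splitting into two cases according to whether a single-element dynamical obstruction to a Cannon--Thurston map is available.

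\textbf{Case 1.} Suppose there exists an infinite-order $h \in H$ that is rank-one for the geometric action of $H$ on the model space $Y$ and that acts by translation on a flat $F \subset X$. Then $h$ acts by North--South dynamics on $\p Y$ by Theorem \ref{lemma_rk1_NS} (noting that $H$ has full limit set in $\p Y$ and does not fix a point there, since it acts geometrically and contains rank-one isometries), and $\p F \subset \p X = \Lam_x G = \Lam_x H$ by Lemma \ref{lemma_cat_ls_normal}. All hypotheses of Lemma \ref{lemma:crit_nonexist} then hold, and the Cannon--Thurston map does not exist.

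\textbf{Case 2.} Otherwise, every $h \in H$ that translates a flat in $X$ also translates a flat in $Y$. Here the single-element obstruction is unavailable, so I would employ the sequence strategy from the Method of Proof. Since $H$ is itself $\CAT(0)$ with isolated flats, it contains a $\Z^2$ subgroup, which is also a $\Z^2$ subgroup of $G$, so by the Flat Torus Theorem any infinite-order $h$ in this $\Z^2$ translates flats $F_Y \subset Y$ and $F_X \subset X$. By Lemma \ref{lemma:inf_ord_elt} the quotient $Q = G/H$ contains an infinite-order element $q$, and any lift $g \in G$ induces an infinite-order $\phi_g \in \Out(H)$. Modeled on the worked example in the introduction, I would consider the two sequences $h_n = h^n$ and $h'_n = h^n \phi_g^n(a) \in H$, for a carefully chosen $a \in H$. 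The element $a$ should be chosen so that the orbit point $\phi_g^n(a) \cdot x$ travels in $F_X$ in a direction linearly independent from the translation direction of $h$, with the effect that $h'_n \cdot x$ approximates a diagonal ray in $F_X$ while $h_n \cdot x$ approximates the $h$-axis; these two rays hit distinct points of $\p F_X \subset \p X$. On the $Y$ side, the geodesic from $y$ to $h'_n \cdot y$ follows the $h$-axis for roughly $n$ steps before branching off to realize $\phi_g^n(a)$, so the two geodesic segments share arbitrarily long initial fellow-travel along the $h$-axis, and both sequences converge to $h^{+\infty} \in \p Y$. Well-definedness of the Cannon--Thurston map is thereby violated.

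The main obstacle lies in Case 2: both the existence of a useful element $a \in H$ and the asserted convergence statements require justification. Convergence of $\{h'_n \cdot x\}$ to a point of $\p F_X$ distinct from $\lim h_n \cdot x$ seems to require controlling the closest-point projection of $h'_n \cdot x$ onto $F_X$, which I would tackle using the theory of closest-point projections for relatively hyperbolic pairs developed by Dru\c{t}u--Sapir and Sisto, applied to the relatively hyperbolic structure $(G,\mathcal{P})$ supplied by Theorem \ref{thm:hruska_kleiner}. Convergence in $\p Y$ to the common limit $h^{+\infty}$ should follow by an analogous projection argument applied inside $H$ (which is also relatively hyperbolic with respect to its maximal virtually abelian subgroups), ensuring that the $\phi_g^n(a)$ contribution does not disturb the long initial fellow-travel along the $h$-axis.
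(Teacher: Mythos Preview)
Your overall architecture matches the paper's: a first case handled by Lemma~\ref{lemma:crit_nonexist}, and a second case using two sequences converging to the same point in $\p Y$ but different points in $\p X$. The gap is that your Case~2 is only a plan, and the specific plan you outline would not go through as stated.

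The central problem is the claim that the geodesic from $y$ to $h^n\phi_g^n(a)\cdot y$ fellow-travels the $h$-axis for roughly $n$ steps. In general $|\phi_g^n(a)|$ grows exponentially in $n$, so this geodesic need not track the $h$-axis for any significant proportion of its length; a projection argument to the flat $F_Y$ does not by itself rule out that the projection of $\phi_g^n(a)\cdot y$ drifts along $F_Y$ (indeed, it typically does). The paper does not fix a single $h$ and a single $a$. In its first subcase (when $H$ has a virtually abelian one-ended factor $\cA$), it first uses the Solvable Subgroup Theorem to produce $t\in G$ commuting with $\cA$, then chooses $h_0\in H\setminus\cA$ and sets $h_n'=h_n t^n h_0 t^{-n}$ where $h_n\in\cA$ is chosen adaptively as a best approximation in $\cA$ to $t^n h_0 t^{-n}$; this is exactly what forces the geodesic in $Y$ to pass near $h_n\cdot y$. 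In its second subcase (no virtually abelian vertex group), the relevant automorphism restricts to a pseudo-Anosov on a hanging Fuchsian piece, and the paper invokes Bestvina--Feighn--Handel lamination theory to show that $\Phi^n(h_0)$ has uniformly bounded overlap with powers of the boundary element $\alpha$, which is what gives convergence to $\alpha^{+\infty}$ in $\p Y$.

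You are also missing two structural steps that the paper needs before the sequences can even be defined: obtaining a lift $t$ of $q$ that commutes with the abelian subgroup (via the Solvable Subgroup Theorem applied to $\cA\rtimes_{\phi_q^m}\Z$), and checking that $h_0$ commutes with no power of $t$ (via Lemma~\ref{lemma:prod_cat0}), so that the flats $F_0$ and $h_n t^n h_0\cdot F_0$ in $X$ are genuinely distinct and the Dru\c{t}u--Sapir/Sisto projection lemmas apply.
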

  \begin{proof}
    Suppose $H$ and $G$ act geometrically on $Y$ and $X$, respectively, where $Y$ and $X$ are $\CAT(0)$ spaces with isolated flats. The structure of the normal subgroup $H$ is guaranteed by Theorem~\ref{thm_structure_normal}. By Lemma~\ref{lemma:crit_nonexist} and Lemma~\ref{cor:char_isoms}, we may assume that the only infinite-order elements of $H$ that stabilize a flat in $X$ lie in a conjugate of a (peripheral) free abelian subgroup of $H$ of rank at least two. Let $y \in Y$ and $x \in X$. To prove the Cannon--Thurston map does not exist, we will exhibit sequences $\{h_n\}_{n \in \N}, \{h'_n\}_{n \in \N} \subset H$ so that 
     \begin{eqnarray}
       \lim_{n \rightarrow \infty} h_n \cdot y &=& \lim_{n \rightarrow \infty} h'_n \cdot y \in \p Y. \\
       \lim_{n \rightarrow \infty}  h_n \cdot x &\neq& \lim_{n \rightarrow \infty} h'_n \cdot x \in \p X.     
     \end{eqnarray}

   \begin{figure}
   \begin{overpic}[width=.95\textwidth,tics=5]{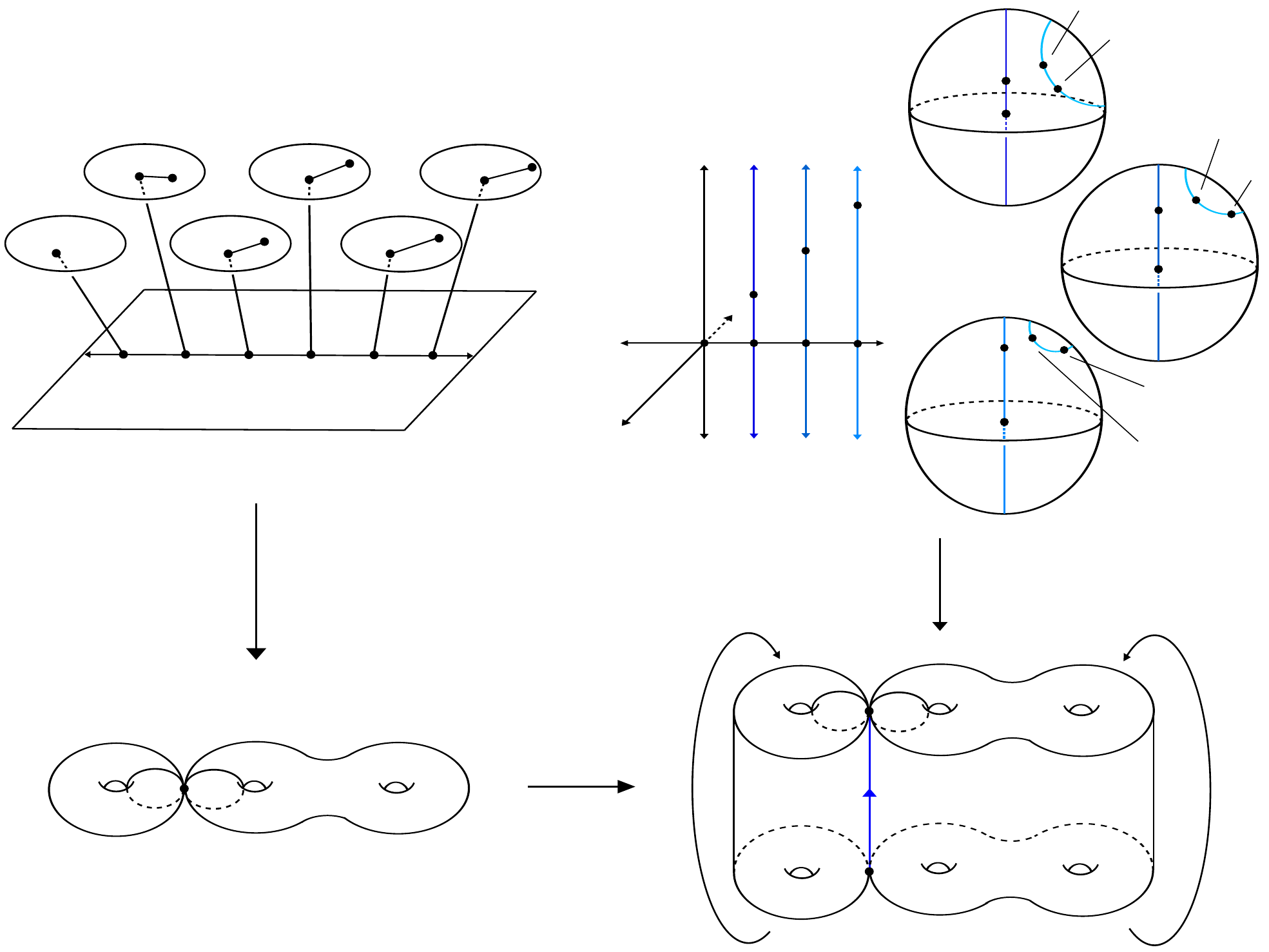} 
   \put(2,16){\small{$\overline{Y}$}}
   \put(25,12){\small{$\Sigma $}}
   \put(11.5,12){\small{$a$}}
   \put(16,12){\small{$b$}}
   \put(1,44){\small{$\E^2$}}
   \put(3,60){\small{$\Hy^2$}}
   \put(38,46){\small{$\alpha$}}
   \put(2,66){\small{$Y$}}
   \put(14,44.5){\small{$a$}}
   \put(18.5,44.5){\small{$a^2$}}
   \put(23.5,44.5){\small{$a^3$}}
   \put(28.5,44.5){\small{$a^4$}}
   \put(33,44.5){\small{$a^5$}}
   \put(11,61.7){\tiny{$a\phi(b)$}}
   %\put(18,56){\tiny{$a^2\phi^2(b)$}}
   \put(27,63.5){\tiny{$a^3\phi^3(b)$}}
  % \put(30,56){\tiny{$a^4\phi^4(b)$}}
   \put(42,63){\tiny{$a^5\phi^5(b)$}}
   \put(60,70){\small{$X$}}
   \put(52,20){\small{$\overline{X}$}}
   \put(62,12){\small{$T^3$}}
   \put(52,10){\small{$id$}}
   \put(96,10){\small{$\phi$}}
   \put(78,12){\small{$M_{\phi}$}}
   \put(69.5,12){\small{$t$}}
   \put(65.5,18.3){\small{$a$}}
   \put(70.3,18){\small{$b$}}
   \put(50,55){\small{$\E^3$}}
   \put(79,54){\small{$\Hy^3$}}
   \put(60,48.5){\tiny{$a$}}
   \put(63.8,48.5){\tiny{$a^2$}}
   \put(68,48.5){\tiny{$a^3$}}
   \put(60,51.5){\tiny{$at$}}
   \put(63.9,55){\tiny{$a^2t^2$}}
   \put(68,58.5){\tiny{$a^3t^3$}}
   \put(69,46){\small{$a$}}
   \put(55,62.5){\small{$t$}}
   \put(58.5,63){\tiny{$\gamma_1$}}
   \put(62.5,63){\tiny{$\gamma_2$}}
   \put(66.7,63){\tiny{$\gamma_3$}}
   \put(78.5,75){\tiny{$\gamma_1$}}
   \put(90.5,63){\tiny{$\gamma_2$}}
   \put(78,51){\tiny{$\gamma_3$}}
   \put(77.5,65.5){\tiny{$a$}}
   \put(89,52.8){\tiny{$a^2$}}
   \put(76.7,41){\tiny{$a^3$}}
   \put(77,68.3){\tiny{$at$}}
   \put(87.5,58){\tiny{$a^2t^2$}}
   \put(75.3,47){\tiny{$a^3t^3$}}
   \put(84.5,74.5){\tiny{$atb$}}
   \put(87.5,72){\tiny{$atbt^{-1}$}}  
   \put(94,64.5){\tiny{$a^2t^2b$}}
   \put(97,61){\tiny{$a^2t^2bt^{-2}$}}   
   \put(90.5,44){\tiny{$a^3t^3bt^{-3}$}} 
   \put(90,39.5){\tiny{$a^3t^3b$}}   
   \end{overpic}
    \caption{On the left, the universal cover of the wedge of a torus and the surface~$\Sigma$ is homotopic to $Y$, the union of countably many copies of the Euclidean and hyperbolic planes attached to each other along line segments. On the right, the space $\overline{X}$ is the union of a $3$-torus and the mapping torus of a pseudo-Anosov homeomorphism $\phi:\Sigma \rightarrow \Sigma$  glued to each other along a circle. The universal cover $X$ is the union of countably many copies of $\E^3$ and $\Hy^3$ glued to each other along lines. The groups $H = \pi_1(\overline{Y})$ and $G = \pi_1(\overline{X})$ are $\CAT(0)$ with isolated flats. The Cannon--Thurston map from $\p H$ to $\p G$ does not exist because the orbit points corresponding to the sequences $\{a^n\}$ and $\{a^n\phi_*^n(b) = a^nt^nbt^{-n}\}$ converge to the same point $\alpha \in \p Y \cong \p H$, but converge to different points in $\p X \cong \p G$. Indeed, the geodesic in $X$ from the basepoint to $a^nt^nbt^{-n}$ passes through the $R$-neighborhood of $a^nt^n$ for some $R$ independent of $n$.}
    \label{figure:noCT}
    \end{figure}

    Let $\cH$ be a Stallings--Dunwoody splitting of $H$. Since $H$ is not virtually free, there exists a one-ended vertex group in $\cH$. 
    
   \noindent {\bf Case 1: $\cH$ has a one-ended vertex group which is a virtually abelian subgroup.}
    
   We first define the sequences $\{h_n\}_{n \in \N},\{h_n'\}_{n \in \N} \subset H$. An example appears in Figure~\ref{figure:noCT}. Let $\cA' \leq H$ denote a one-ended vertex group of $\cH$ that is virtually abelian. Let $\cA \leq \cA'$ be a finite-index subgroup isomorphic to $\Z^k$ for some $k \geq 2$. By Lemma~\ref{lemma:inf_ord_elt}, there is an infinite-order element $q \in Q = G/H$ that induces an infinite-order outer automorphism $\phi_q \in \Out(H)$. By Lemma~\ref{lemma_one_end}, there exists $m \in \N^+$ such that $\phi_q^m([\cA']) = [\cA']$. Up to passing to a further power if necessary, we may assume that $\phi_q^m([\cA]) = [\cA]$. By Lemma~\ref{lemma:inf_ord_elt}, the group $G$ contains a subgroup isomorphic to the group $\cA \rtimes_{\phi_q^m} \Z$, which is solvable. By the Solvable Subgroup Theorem \cite[Theorem II.7.8]{bridsonhaefliger}, this solvable subgroup is virtually abelian. Thus, after possibly passing to a power, the element $\phi_q^m$ restricted to $\cA$ is the identity. Therefore, there exists $t \in G$ with $p(t) = q^m$, where $p:G \rightarrow Q$ is the quotient map, and so that $\phi_q^{mn}(h) = t^nht^{-n} = h$ for all $h \in \cA$ and $n \in \N$.
   
   Since the group $H$ is not virtually abelian, there exists an infinite order element $h_0 \in H \setminus \cA'$. The subgroup $\la \cA', h_0 \ra \leq H$ is also not virtually abelian. Indeed, if $\la \cA', h_0 \ra$ were virtually abelian, and hence one-ended, then this group would act elliptically on any tree on which it acts with finite edge stabilizers including in particular the Bass--Serre tree for $\cH$, on which it would fix the same vertex as $\cA'$, a contradiction. We claim that $h_0$ does not commute with any power of $t$. Indeed, in this case, $\la \cA', h_0 \ra \oplus \la t^k \ra$ would be a subgroup of $G$, contradicting Lemma~\ref{lemma:prod_cat0}. 

  Consider the elements $t^nh_0t^{-n}$ for $n \in \N$. Fix a point $p$ in the flat stabilized by $\cA$ in $Y$. Choose $a_n \in \cA$ so that $d(a_n\cdot p, t^nh_0t^{-n}\cdot p)$ is minimized, and among such $a_n \in \cA$, $d(a_n \cdot p, p)$ is maximized. If the length $|a_n|<D$ for all $n \in \N$ and some $D \geq 0$, let $h_1 \in \cA$ be an arbitrary infinite-order element and let $h_n = h_1^n$. Otherwise, after choosing an infinite subsequence (we will abuse notation and continue to write $n \in \N$ rather than $n_k, k \in \N$), we have $|a_n| \rightarrow \infty$ as $n \rightarrow \infty$ so that $\{a_n\cdot p\}_{n \in \N}$ converges to a point~$\alpha$ in the visual boundary of the flat in $Y$ stabilized by $\cA$, by compactness of $Y \cup \p Y$, and $\{a_nt^n \cdot x\}$ converges to a point in the boundary of the flat in $X$ stabilized by $\la \cA, t \ra$. Let $h_n = a_n$. In both cases, let $h_n' = h_nt^nh_0t^{-n}$.
   
   To show (1), suppose $F \subset Y$ is a flat stabilized by $\cA$, and let $y \in F$. Consider the geodesic segment $\gamma_n$ in $Y$ connecting $id \cdot y$ to $h_n'\cdot y = h_nt^nh_0t^{-n} \cdot y = h_na_nw_n \cdot y.$ In the case that $h_n = h_1^n \in \cA$ and the length of $a_n$ is bounded by $D\geq 0$, the geodesic $\gamma_n$ passes through the $D'$-neighborhood of the point $h_1^n\cdot y$ for all $n \in \N$ and some $D' \geq 0$. Thus, $h_n\cdot y$ and $h_n'\cdot y$ converge to the same point in $\p Y$. Otherwise, $h_n = a_n$ and $h_n' = a_na_nw_n$, and, similarly, $h_n\cdot y$ and $h_n'\cdot y$ converge to the same point in $\p Y$.   
   
   To prove (2), we will show $\lim_{n \rightarrow \infty} h'_n \cdot x = \lim_{n \rightarrow \infty} h_nt^n \cdot x$. Suppose the basepoint $x \in X$ lies in the flat $F_0$ stabilized by $\Z^{k+1} \cong \la \cA, t \ra \leq G$. For all $n \in \N$, the points $h_nt^n\cdot x$ lie in the flat $F_0$. Let $F_n \subset X$ denote the flat containing the point $h_nt^nh_0t^{-n} \cdot x$. That is, $F_n = h_nt^nh_0\cdot F_0$. By assumption, $h_0$ does not commute with any non-trivial element of the subgroup $\la \cA,t \ra$. Therefore, in the language of Definition~\ref{def:cat0IFP}, the flats $F_0$ and $F_n$ lie in tubular neighborhoods of distinct flats in $\cF \subset X$. For $S  \subset X$, let $\pi_{F_0}(S)$ denote the {\it almost projection}~\cite{drutusapir,sisto} of $S$ to $F_0$: the set of points of $F_0$ whose distance from $S$ is less than $d(S,F_0) + 1$. The subspace $\pi_{F_0}(F_n)$ has bounded diameter by \cite[Theorem 2.14]{sisto}. For all $n \in \N$ we have $d(h_nt^n\cdot x, \pi_{F_0}(F_n)) = d(h_1t\cdot x, \pi_{F_0}(F_1))$ so in particular these distances are uniformly bounded.  Therefore, by \cite[Lemma 1.15]{sisto}, there exists $R>0$ so that the geodesic connecting $x$ to $h_nt^nh_0t^{-n}\cdot x$ passes through $B_R(h_nt^n\cdot x)$ for all $n \in \N$. Thus, 
   $\lim_{n \rightarrow \infty} h'_n \cdot x = \lim_{n \rightarrow \infty} h_nt^n \cdot x \neq \lim_{n \rightarrow \infty} h_n \cdot x$, as desired.
   
   \vskip.1in
   
   \noindent {\bf Case 2: No vertex group of $\cH$ is isomorphic to a virtually abelian group.}
   
   In this case, there exists $H_1 \leq H$, a one-ended vertex group in the Stallings--Dunwoody decomposition of $H$ so that the elementary JSJ decomposition of $H_1$ contains a maximal virtually abelian vertex group $\cA_H$. In addition, there exists a maximally hanging Fuchsian vertex group $H_v$ adjacent to $\cA_H$. As in Case 1, there exists an infinite-order element $q \in Q$ which yields an infinite-order outer automorphism $\phi_q \in \Out(H)$ so that $\phi_q([\cA_H])= [\cA_H]$ and $\phi_q([H_v]) = [H_v]$. Since $H_v$ is a maximal hanging Fuchsian group, it contains a finite-index subgroup $H_{\Sigma}$ isomorphic to $\pi_1(\Sigma)$, where $\Sigma$ is a surface with negative Euler characteristic and non-empty boundary. After passing to a power, we may also assume $\phi_q([H_{\Sigma}]) = [H_{\Sigma}]$. As above, there exists $t' \in G$ so that $p(t') = q$ and $t'ht'^{-1} = h$ for all $h \in \cA_H$; and, for all $h \in H_v$, there exists $w \in H$ and $h' \in H_v$ so that $t'ht'^{-1} = wh'w^{-1}$. There exists an infinite-order element $\alpha \in \cA_H \cap H_v$ since these vertex groups are adjacent. After taking a power, we may assume that $\alpha \in H_{\Sigma}$. Thus, $w \in \cA_H$, and there exists $t \in G$ so that $p(t) = q$ and $tht^{-1} = h$ for all $h \in \cA_H$, and for all $h \in H_v$, there exists $h' \in H_v$ so that $tht^{-1} = h'$. %Let $\Phi_q$ be a representative of $\phi_q$ corresponding to $t$, that is $\Phi_q(h_0) := tht^{-1} = h$ for $h \in \cA_H$ and $\Phi_q(h):=tht^{-1} \in H_v$ for $h \in H_v$. 
 Let $h_n = \alpha^n$ and let $h_n' = \alpha^nt^nh_0t^{-n}$ for $n \in \N$, where $\alpha$ is as defined above, and $h_0$ is an arbitrary element of $H_{\Sigma}$. 
       
   To prove (1), observe that $t^nh_0t^{-n} = \Phi^n(h_0)$, where we may assume that $\Phi \in \Aut(F_n)$ is a fully irreducible automorphism induced by a pseudo-Anosov homeomorphism of $\Sigma$. Indeed, if $\Phi$ is not induced by a pseudo-Anosov, then $\Phi$ fixes the homotopy class of a curve $\beta$ on $\Sigma$. Then the element $\beta$ stabilizes a flat in $X$ but does not stabilize a flat in $Y$. Hence by Lemma~\ref{cor:char_isoms} and Theorem~\ref{lemma_rk1_NS}, the element $\beta$ acts by North-South dynamics on $Y$. Thus by Lemma~\ref{lemma:crit_nonexist} the Cannon--Thurston map does not exist. So, the automorphism $\Phi$ is induced by a pseudo-Anosov homeomorphism. %The sequence $\Phi^n(h_0)$ converges to a leaf in the Bestvina--Feighn--Handel lamination associated to $\Phi$ by \cite[Proposition 1.6]{bestvinafeighnhandel}. The leaf is quasiperiodic by \cite[Proposition 1.8]{bestvinafeighnhandel}. Therefore, there exists $D \geq 0$ so that for all $n \in \N$, if $\Phi^n(h_0) = \alpha^{\pm d}a_n$, for some word $a_n$ that does not have a non-trivial power of $\alpha$ as an initial subword, then $d<D$. Thus, $\lim_{n \rightarrow \infty} h_n\cdot y = \lim_{n \rightarrow \infty} h_n' \cdot y$. 
   By \cite[Proposition 1.6]{bestvinafeighnhandel} the sequence $\Phi^n(h_0)$ converges to a leaf $\ell$ in the Bestvina-Feighn-Handel lamination $\Lambda_{\Phi}$, which is a closed, $F_n$-invariant subset of $\partial^2 F_n$. Here $\partial^2 F_n$ is the quotient of $(\partial F_n \times \partial F_n $ minus the diagonal) by the flip action. A lift of $\alpha$ to a Cayley graph of $\F_n$ determines a leaf in $\partial^2 F_n$ corresponding to $\alpha$. By \cite[Proposition 1.8]{bestvinafeighnhandel}, the leaf $\ell$ has bounded overlap with any lift of $\alpha$. This implies there exists a uniform bound on the overlap between any geodesic in $\{[y, \Phi^n(h_0) \cdot y]\}_{n \in \mathbb{N}}$ and any geodesic in $\{[y, \alpha^m \cdot y]\}_{m \in \mathbb{Z}}$. Thus, $\lim_{n \rightarrow \infty} h_n\cdot y = \lim_{n \rightarrow \infty} h_n' \cdot y$.
   
   The proof of (2) is analogous to the proof of (2) in Case~1.
   
  \end{proof}
% \textbf{OPTION 2:} The sequence $\Phi^n(h_0)$ converges to the stable lamination associated to the pseudo-Anosov homeomorphism of $\Sigma$. Since such a stable lamination does not accumulate on the boundary curve of $\Sigma$, there exists a uniform bound on the overlap between any geodesic in $\{[y, \Phi^n(h_0) \cdot y]\}_{n \in \mathbb{N}}$ and any geodesic in $\{[y, \alpha^m \cdot y]\}_{m \in \mathbb{Z}}$. Thus, $\lim_{n \rightarrow \infty} h_n\cdot y = \lim_{n \rightarrow \infty} h_n' \cdot y$.

  \begin{remark}
    Suppose $H \triangleleft G$ are $\CAT(0)$ groups with isolated flats and $G$ acts geometrically on a $\CAT(0)$ space $X$. Suppose, as in the proof above, the only infinite-order elements of $H$ that stabilize a flat in $X$ lie in a conjugate of a (peripheral) free abelian subgroup of $H$ of rank at least two. The proof of Theorem~\ref{thm_noCT_IFP} extends to show there are, in fact, countably many sequences that converge to the same point in $\p H$, but converge to pairwise distinct points in $\p G$. Indeed, as defined in the proof above, consider the sequences $\{h_n\}_{n \in \N}$ and $\{h_{n,k}' = h_nt^{kn}h_0t^{-kn}\}_{n \in \N}$ for a fixed $k \in \N$. Then, for $k \neq k'$, the sequences $\{h'_{n,k}\}$ and $\{h'_{n,k'}\}$ converge to the same point in $\p H$, but converge to rays with different slopes in a flat in $X$, and hence converge to different points in~$\p G$. 
  \end{remark}

%%%%%%%%%%%%%%%%%%%%%%%%%%%%%%%%%%%%%%%%%%%%%%%%%%%%%%%%%%%
%%                BIBLIOGRAPHY
%%%%%%%%%%%%%%%%%%%%%%%%%%%%%%%%%%%%%%%%%%%%%%%%%%%%%%%%%%%
\bibliographystyle{alpha}
\bibliography{refs}

\end{document}